\documentclass[a4paper,11pt]{article}

\usepackage{amsmath,mathtools,amsthm,amssymb,amsfonts,color,comment}
\usepackage[numbers,sort&compress]{natbib}
\usepackage[colorlinks,linkcolor=blue,anchorcolor=blue,citecolor=green]{hyperref}
\usepackage{enumerate}
\numberwithin{equation}{section}
\allowdisplaybreaks[4]
\newtheorem{theorem}{Theorem}[section]
\newtheorem{lemma}{Lemma}[section]
\newtheorem{remark}{Remark}[section]

\newtheorem{corollary}{Corollary}[section]
\newtheorem{proposition}{Proposition}[section]
\usepackage{mathrsfs}
\makeatletter

\newcommand{\Rmnum}[1]{\expandafter\@slowromancap\romannumeral #1@}
\usepackage{authblk}
\usepackage[title]{appendix}
\allowdisplaybreaks[4]

\renewcommand{\div}{\mathrm{div}}
\newcommand{\R}{\mathbb{R}}

\newcommand{\D}{\mathbb{D}}
\newcommand{\E}{\mathbb{E}}
\newcommand{\dt}[1]{\frac{\mathrm{d}}{\mathrm{d}#1}}
\newcommand{\mi}{(\mathbf{I-P})}
\renewcommand{\P}{\mathbf{P}}

\renewcommand{\L}{\mathbf{L}}
\newcommand{\M}{\mathbf{M}}
\renewcommand{\d}{\mathrm{d}}

\newcommand{\intt}[2]{\int_{0}^{t}{#1}\,\mathrm{d}{#2}}
\newcommand{\intx}[1]{\int{#1}\,\mathrm{d}{x}}
\newcommand{\intv}[1]{\int{#1}\,\mathrm{d}{v}}
\newcommand{\intxv}[1]{\int{#1}\,\mathrm{d}{x}\mathrm{d}v}

\newcommand{\inv}[1]{\left\langle #1 \right\rangle_v}

\newcommand{\Ltwox}[1]{\Vert {#1} \Vert_{L^2_x}}
\newcommand{\Ltwov}[1]{\Vert {#1} \Vert_{L^2_v}}
\newcommand{\Ltwoxv}[1]{\Vert {#1} \Vert_{L^2_xL^2_v}}
\newcommand{\Lfourx}[1]{\Vert {#1} \Vert_{L^4_x}}

\newcommand{\Htwox}[1]{\Vert{#1} \Vert_{H^2_x}}
\newcommand{\Hthreex}[1]{\Vert{#1} \Vert_{H^3_x}}

\newcommand{\HxxLv}[1]{\Vert{#1} \Vert_{H^2_xL^2_v}}
\newcommand{\HxxxLv}[1]{\Vert{#1} \Vert_{H^3_xL^2_v}}
\newcommand{\HxxxxLv}[1]{\Vert{#1} \Vert_{H^4_xL^2_v}}

\newcommand{\Lv}[1]{\left\Vert {#1} \right\Vert_{\nu}}

\newcommand{\HxxHv}[1]{\Vert{#1} \Vert_{H^2_xH^1_v}}
\newcommand{\HxxHvv}[1]{\Vert{#1} \Vert_{H^2_xH^2_v}}

\newcommand{\Hfourx}[1]{\left\Vert{#1} \right\Vert_{H^4_x}}

\newcommand{\Linftyx}[1]{\Vert{#1} \Vert_{L^\infty_x}}

\newcommand{\Linftyxv}[1]{\Vert{#1} \Vert_{L^\infty_{x}L^\infty_v}}

\makeatother

\begin{document}
	
	\title{Global hydrodynamic limit of the incompressible Navier-Stokes/Vlasov-Fokker-Planck system with large data}
	
	\author[a]{Jian Li}
	\author[a]{Huanyao Wen\thanks{Corresponding author. Email: mahywen@scut.edu.cn.}}
	\affil[a]{School of Mathematics, South China University of Technology, Guangzhou, China}

	\date{}
	\maketitle
	\renewcommand{\thefootnote}{}

\begin{abstract}
In this paper, we aim to investigate the global-in-time hydrodynamic limit of the incompressible Navier-Stokes/Vlasov-Fokker-Planck system for light particles. With a modified micro-macro decomposition yielding uniform a priori estimates with respect to the Deborah number $\varepsilon$, we prove that the solution of the original system tends to that of the target system, i.e., the incompressible Navier-Stokes/Kramers-Smoluchowski system, as Deborah number goes to zero, and that quantitative convergence rates are obtained in the $L_{x,v,t}^\infty$ norm. Compared with the existing results that the convergence is from ``weak solution" to ``strong solution", the convergence results in the present work are further developed globally in time from ``strong solution" to ``strong solution" with vacuum (i.e., the macroscopic particle density may vanish at far field or at any point) and large initial data for both the original system and the target system. In addition, the ill-prepared initial data for the microscopic component of the kinetic quantity are allowed.
\end{abstract}
	\vspace{4mm}
	
	{\noindent \textbf{Keywords:} incompressible Navier-Stokes equations, Vlasov-Fokker-Planck equation, hydrodynamic limits, large initial data.}
	
	\vspace{4mm}
	{\noindent\textbf{AMS Subject Classification (2020):} 35Q83, 35Q30, 35B25, 35B40}

\section{Introduction}

\subsection{The model}
 The fluid-particle interaction model has attracted a lot of attention in recent years, due to its wide application in describing the phenomena arising from physics, engineering, medical research and et al. (see for instance \cite{BBJM2005, BBKT2003, Williams1985, WANG2015, YU2013}). The motion of the particle and the fluids are usually described by Vlasov-Fokker-Planck equation and Euler/Navier-Stokes system, respectively (see \cite{Carrilo2006, Goudon2004light}). Mathematically, the coupled kinetic-fluid system reads as follows:
\begin{equation}
\left\{
\begin{array}{ll}
\partial_t f + v\cdot \nabla_x f = \frac{9\mu}{2a^2\rho_P} \div_v\Big( (v - u)f + \frac{k\theta_0}{m_P}\nabla_v f \Big),\\[2mm]
\rho_F \big(\partial_t u + u \cdot \nabla_x u\big) + \nabla_x P - \mu \Delta_x u = 6\pi\mu a \int_{\mathbb{R}^2}(v - u)f \,\mathrm{d}v,\\[2mm]
\div_x u = 0,
\end{array}
\right.
\end{equation}
where  $f(t,x,v)$, $u(t, x)$, $P(t, x)$ denote the density distribution function of the
particles, velocity of the fluid, and the pressure of the fluid at the time $t \ge 0$, position $x\in \mathbb{R}^2$, velocity of the particle $v\in\mathbb{R}^2$, respectively. $\mu$ represents the dynamic viscosity of the fluid, $a$ is the particle radius, $\rho_P$ denotes the particle mass density, $m_P = \frac{4}{3}\pi a^3 \rho_P$ is the single particle mass, $\rho_F$ is the constant fluid density, and $k\theta_0$ denotes the thermal energy. The explicit term $6\pi\mu a$ originates from Stokes' drag law.

When the hydrodynamic limits are taken into account, there are usually two different scalings regarding the ``light particle" regime and the ``fine particle" regime, see \cite{Carrilo2006}. The fine particle scaling (i.e., $\rho_P \approx \rho_F$) reflects particles that are small in size but have an overall mass comparable to the fluid, whereas the light particle scaling (i.e., $\rho_P/\rho_F = O(\varepsilon^2)$, where $\varepsilon$ represents Deborah number) describes particles with an extremely small mass. There are some interesting physical models related to the hydrodynamic limits for light particles and fine particles such as incompressible Navier-Stokes equations/Kramers-Smoluchowski system and inhomogeneous incompressible Navier-Stokes equations (see \cite{Goudon2004light, Goudon2004fine}). In this paper, we consider the light particle. In fact, by introducing suitable characteristic scales for time, length, and velocity, and performing dimensionless analysis for light particles governed by a small Deborah number $\varepsilon > 0$, the dimensionless incompressible Navier-Stokes/Vlasov-Fokker-Planck system can be written as follows:
\begin{equation}
\left\{
\begin{array}{ll}
\label{f}
\partial_tf^\varepsilon+\frac{1}{\varepsilon}v\cdot \nabla_xf^\varepsilon= \frac{1}{\varepsilon^2}\div_v\big(\nabla_vf^\varepsilon+(v-\varepsilon u^\varepsilon)f^\varepsilon\big),\\ [2mm]
\partial_tu^\varepsilon+u^\varepsilon \cdot \nabla_x u^\varepsilon+\nabla_x P^\varepsilon -\Delta_xu^\varepsilon= \frac{1}{\varepsilon}\int_{\mathbb{R}^2}(v-\varepsilon u^\varepsilon)f^\varepsilon \mathrm{d}v,\\ [2mm]
\div_xu^\varepsilon=0,\\ [2mm]
(f^\varepsilon(t,x,v),u^\varepsilon(t,x))|_{t=0}=(f^{\varepsilon,in}(x,v),u^{\varepsilon,in}(x)).
\end{array}
\right.
\end{equation}
As $\varepsilon \rightarrow 0$, the macroscopic limit for light particle yields the incompressible Navier-Stokes equations/Kramers-Smoluchowski equation  \cite{Goudon2004light}:
\begin{equation}
\left\{
\begin{array}{ll}
\label{limit}
\partial_tm_0+u_0\cdot\nabla_x m_0-\Delta_xm_0=0,\\ [2mm]
\partial_t u_0+u_0\cdot \nabla_xu_0+\nabla_x P_0-\Delta_x u_0=0,\\ [2mm]
\div_xu_0=0,\\ [2mm]
(m_0(t,x),u_0(t,x))|_{t=0}=(m^{in}_0(x),u^{in}_0(x)),
\end{array}
\right.
\end{equation}
where $\displaystyle m_0(t,x) = \lim\limits_{\varepsilon\to 0}\int_{\mathbb{R}^2} f^\varepsilon \mathrm{d}v$ acts as the macroscopic particle density.

There have been some literatures related to the hydrodynamic limits for light particles and fine particles. More specifically, in the seminal work \cite{Goudon2004light}, Goudon, Jabin, and Vasseur investigated the hydrodynamic limit of incompressible Navier-Stokes/Vlasov-Fokker-Planck system for light particles in two spatial dimensions, where the convergence of the weak solution to the scaled system (\ref{f}) was proved in a weak sense based on some uniform entropy estimates and compactness arguments. See a very recent work by Choi and Jung \cite{Choi2026} where hydrodynamic limit was studied by using relative entropy method. Goudon, Jabin, and Vasseur \cite{Goudon2004fine} also investigated the hydrodynamic limit for the fine particles regarding another different scaling by using the relative entropy method, which yields the $L^1$ and $L^2$ convergence of the kinetic quantity and the fluid velocity, respectively.
Su and Yao \cite{Su2020} extended the corresponding result in \cite{Goudon2004fine} to that in a bounded domain, see \cite{Mellet2008} for the compressible case. These works are concerned about the hydrodynamic limits from ``weak solution". Recently, instead of using relative entropy method, Fang, Qi, and Wen \cite{Fang-Qi-Wen2024} derived quantitative Deborah number limits of the scaled system (\ref{f}) via employing the Hilbert expansion from ``strong solution" to ``strong/smooth solution". However, small initial data for both the original system and the target system and small perturbation between the initial data in \cite{Fang-Qi-Wen2024} are necessary. In addition, vacuum is not allowed.

The main ingredients in this work can be summarized as follows:
\begin{itemize}	
	\item We establish global-in-time quantitative hydrodynamic limit from ``strong solution" to ``strong/smooth solution", allowing large initial data for both the original system and the target system. Compared with quantitative limit from ``weak solution" (\cite{Choi2026}), small initial data for the limit from ``strong solution" seem necessary (\cite{Fang-Qi-Wen2024}). More precisely, the following Hilbert expansion
	\[
	f^\varepsilon = m_0\M + \varepsilon\tilde{g}_1\sqrt{\M} + \varepsilon\tilde{g}\sqrt{\M}, \qquad u^\varepsilon = u_0 + \varepsilon\tilde{u},
	\]
was introduced in \cite{Fang-Qi-Wen2024} with small initial data of the original system and the target system:
	\[
	\begin{aligned}
	&\|(m_0^{in}-1,u_0^{in})\|_{H_x^6}^2 + \|\tilde{g}^{in}\|_{H_x^4L_v^2}^2  + \|\nabla_v\mi\tilde{g}^{in}\|_{H_{x,v}^3}^2 + \|\tilde{u}^{in}\|_{H_x^4}^2 \le \delta,
	\end{aligned}
	\]
where $\tilde{g}_1=\big[(v\cdot u_0)m_0-v\cdot\nabla_xm_0\big]\sqrt{\M}$. 	
	
In contrast with \cite{Fang-Qi-Wen2024}, here we introduce a new expansion for the probability density $f^\varepsilon$ as below \[
	f^\varepsilon = g\sqrt{\M} = (m_0+\phi)\M + (\tilde{b}+\varepsilon\bar{b})\cdot v\M + \mi g\sqrt{\M},\ \ u^\varepsilon = u_0+\psi,
	\] and obtain some new estimates related to $(\phi,\psi,\tilde{b},g)$ (see Lemmas \ref{lem:phi}--\ref{lem:g}). As a consequence, the smallness condition is placed only on $\varepsilon^2\HxxxxLv{g^{in}}^2$ and certain error functions. Therefore, for any fixed $\HxxxxLv{g^{in}}^2$ that can be arbitrarily large, one can always choose $\varepsilon$ sufficiently small to satisfy the required smallness condition. It demonstrates that the
 initial data of the target system can be arbitrarily large and that the initial data of the original system are in a neighborhood of the large data. This approach allows us to establish the hydrodynamic limit in $L^\infty_tL^\infty_xL^\infty_v$ with explicit convergence rates from \eqref{f} to \eqref{limit} for large initial data.

	
	\item The appearance of vacuum, i.e., $\displaystyle \int f^\varepsilon(t,x,v)\,\d v$ vanishing at far field, makes a crucial quantity in \cite{Fang-Qi-Wen2024}: $$\displaystyle\frac{1}{\varepsilon^2}\intx{|b|^2}-\frac{1}{\varepsilon}\intx{b\cdot u}+\intx{a|u|^2}$$ fail to be non-negative, since $a=\displaystyle \int f^\varepsilon(t,x,v)\,\d v$. Thus the method used in \cite{Fang-Qi-Wen2024} can not be applied to the present work. To handle this issue, we construct a new equation $\eqref{eq:error}_4$ that is the sum of $\eqref{eq:error}_2$ and $\eqref{eq:error}_3$. However, this simultaneously causes a new difficulty, i.e., a trouble term $\varepsilon\partial_t\tilde{b}$ will appear, that needs a dissipative estimate of $\partial_t\psi$ to recover. To resolve this, we use another equation $\eqref{eq:error}_3$ instead of $\eqref{eq:error}_4$. For otherwise, it will generate an uncontrollable cross-term $\intx{\varepsilon\partial_t\tilde{b}\cdot\partial_t\psi}$ (see Lemma \ref{lem:psi}). 
	
	\item We allow ill-prepared initial data for the microscopic component of the kinetic quantity by removing the conventional smallness assumptions on the microscopic component $\mi g^{in}$, which may be of $\mathcal{O}(1)$ size. Exploiting the strong kinetic dissipation over a short initial layer $t\in[0,T_0]$, we show that these large microscopic perturbations are rapidly damped and become compatible with the hydrodynamic regime. As a consequence, the global-in-time hydrodynamic limit remains valid for a substantially broader class of initial states. See Lemmas \ref{lem:phipsiH2} and \ref{lem:miH3}, as well as Section \ref{sec:ProofofTheorem2}.
\end{itemize}


\paragraph{Notation}
We introduce some notations used in this paper.
\begin{itemize}
	\item Let $C,\ \bar{C}$, and $c$ denote constants independent of $\varepsilon$ and $T$.
	\item $A \lesssim B$ means that there exists a generic positive constant $C > 0$ such that $A \leq CB$.
	\item For multi-indices $\alpha = (\alpha_{1}, \alpha_{2})$ and $\beta = (\beta_{1}, \beta_{2})$, we denote
	\[
	\partial_x^\alpha = \partial_{x_{1}}^{\alpha_{1}}\partial_{x_{2}}^{\alpha_{2}}, \quad \partial_{v}^{\beta} = \partial_{v_{1}}^{\beta_{1}}\partial_{v_{2}}^{\beta_{2}}.
	\]
	\item We define the following integral notations:
	\[
	\int g \,\mathrm{d} x = \int_{\R^2} g \,\mathrm{d} x, \quad \int g \,\mathrm{d} v = \int_{\mathbb{R}^{2}} g \,\mathrm{d} v, \quad \int g \,\mathrm{d} x \mathrm{d} v = \int_{\mathbb{R}^{2}} \int_{\R^2} g \,\mathrm{d} x \mathrm{d} v,
	\]
	\[
	\|g\|_{\nu} = \left( \int \left(1 + |v|^{2}\right)|g|^{2} + |\nabla_{v}g|^{2} \,\mathrm{d} x \mathrm{d} v \right)^{\frac{1}{2}}.
	\]
	\item For an integer $k\geq 0$, we denote the following spaces:
	\begin{equation*}
    \begin{aligned}
    	&H^k_v:=\left\{ f\ |\ \Ltwov{\partial_v^\alpha f}^2< +\infty,\ \text{for}\ |\alpha|\le k   \right\},\\
     	&\mathcal{H}_v^k:= \bigg\{ f \ \bigg|\ \int  (1+|v|^2) |\partial_v^\alpha f|^2 + |\nabla_v\partial_v^\alpha f|^2 \mathrm{d}v < \infty,\ \text{for}\ |\alpha|\le k \bigg\},\\
    	&H^k_x:=\left\{ u\ |\ \Ltwox{\partial_x^\alpha u}^2< +\infty,\ \text{for}\ |\alpha|\le k   \right\},\\
    	&\mathcal{H}^k_x:=\left\{ u\ |\ \Ltwox{\partial_x^\alpha u}^2< +\infty,\ \text{for}\ 1\le|\alpha|\le k   \right\}.
    \end{aligned}
	\end{equation*}

\end{itemize}

\subsection{Global well-posedness}
To close the uniform estimates for large initial data, the regularity and time-decay properties of the background macroscopic quantities are needed. In this subsection, we recall the global well-posedness and decay estimates for system \eqref{limit} with initial data in $H^s_x$ (\cite{Masuda1984, Schonbek1996}).
\begin{proposition}\label{prop_limit_decay}
Assume that the initial data satisfy $(m^{in}_0, u^{in}_0) \in H^s_x(\mathbb{R}^2) \times H^s_x(\mathbb{R}^2)$ and $\div_x u^{in}_0 = 0$ where $s \ge 6$ is an integer. Then, the macroscopic system \eqref{limit} admits a unique global solution $(m_0, u_0)$ satisfying
	\begin{equation}\label{eq:regularity limit equation}
	m_0, u_0 \in C([0,\infty); H^s_x),\quad  \nabla_xm_0,\nabla_xu_0\in L^2(0,\infty; H^s_x).
	\end{equation}
	Furthermore, for any integers $k \ge 0$ and $i \ge 0$ satisfying $k+2i \le s$, there exists a constant $C_0 > 1$ depending only on the initial norm $\|(m^{in}_0, u^{in}_0)\|_{H^s_x}$, such that the time-space derivatives enjoy the following time-decay rates:
	\begin{equation}
	\label{decay_rates_Hm}
	\|\partial^i_t\nabla_x^k m_0(t, \cdot)\|_{L^2_x} + \|\partial^i_t\nabla_x^k u_0(t, \cdot)\|_{L^2_x} \le C_0(1+t)^{-\frac{k}{2}-i}.
	\end{equation}
	In particular, the following $L^\infty_x$ decay estimates for the time-space derivatives:
	\begin{equation}
	\label{decay_rates_infty_Hm}
	\|\partial_t^i\nabla_x^k m_0(t, \cdot)\|_{L^\infty_x} + \|\partial_t^i\nabla_x^k u_0(t, \cdot)\|_{L^\infty_x} \le C_0(1+t)^{-\frac{1+k}{2}-i}
	\end{equation} hold for $0 \le k+2i \le s-2$.
\end{proposition}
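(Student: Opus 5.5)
The plan is to treat the two equations of \eqref{limit} in sequence, since they decouple. The velocity $u_0$ solves the two-dimensional incompressible Navier--Stokes equations independently of $m_0$. Once $u_0$ is known, $m_0$ solves a linear advection--diffusion equation with a divergence-free drift.

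\textbf{Step 1: the fluid.} For $u_0$ I would invoke the classical 2D theory: global existence and uniqueness in $C([0,\infty);H^s_x)$, with $\nabla_x u_0\in L^2(0,\infty;H^s_x)$. The argument starts from the energy identity
\[
\frac12\frac{\d}{\d t}\Ltwox{u_0}^2+\Ltwox{\nabla_x u_0}^2=0,
\]
which uses $\div_x u_0=0$, and from the vorticity equation $\partial_t\omega+u_0\cdot\nabla_x\omega-\Delta_x\omega=0$. These give uniform $L^2$ bounds for $u_0$ and $\omega$. Higher $H^s$ bounds then follow by commutator (Kato--Ponce) estimates and Gronwall, using $\int_0^\infty\|\nabla_x u_0\|_{L^\infty_x}^{2}\,\d t$-type controls, which are finite by the energy and vorticity dissipation.

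\textbf{Step 2: the density.} The equation for $m_0$ is linear, and incompressibility again gives
\[
\frac12\frac{\d}{\d t}\Ltwox{m_0}^2+\Ltwox{\nabla_x m_0}^2=0.
\]
Applying $\partial_x^\alpha$ with $|\alpha|\le s$ and controlling the commutator $[\partial_x^\alpha,u_0\cdot\nabla_x]m_0$ by the already bounded norms of $u_0$ yields the regularity \eqref{eq:regularity limit equation}.

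\textbf{Step 3: decay rates.} For the $L^2$ decay \eqref{decay_rates_Hm} with $i=0$, I would use Schonbek's Fourier splitting. Splitting frequencies at $|\xi|^2\sim (1+t)^{-1}$ in the energy inequality gives a decay rate for $\Ltwox{u_0}$ and $\Ltwox{m_0}$, provided the low frequencies are controlled. The $k$-th derivative rates $(1+t)^{-k/2}$ are then obtained inductively from weighted energy estimates of the form
\[
\frac{\d}{\d t}\big[(1+t)^k\Ltwox{\nabla_x^k\cdot}^2\big]+(1+t)^k\Ltwox{\nabla_x^{k+1}\cdot}^2\lesssim k(1+t)^{k-1}\Ltwox{\nabla_x^k\cdot}^2+\text{nonlinear terms}.
\]
Here the nonlinear terms are absorbed using the lower-order decay already proved.

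Time derivatives are handled by substituting the equations: $\partial_t=\Delta_x-u_0\cdot\nabla_x-\nabla_x P_0$, where $P_0$ is given by Riesz transforms of $u_0\otimes u_0$. Thus $\partial_t^i\nabla_x^k$ costs $k+2i$ spatial derivatives, which accounts for the condition $k+2i\le s$. The $L^\infty$ bounds \eqref{decay_rates_infty_Hm} follow from the 2D Gagliardo--Nirenberg inequality $\|w\|_{L^\infty_x}\lesssim\Ltwox{w}^{1/2}\Ltwox{\nabla_x^2 w}^{1/2}$ applied to $w=\partial_t^i\nabla_x^k(m_0,u_0)$. This gives the exponent $\tfrac{1}{2}\big(\tfrac{k}{2}+\tfrac{k+2}{2}\big)+i=\tfrac{1+k}{2}+i$ and explains the restriction $k+2i\le s-2$.

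\textbf{Main obstacle.} The hardest step is the base case of Step 3, the low-frequency part of Fourier splitting. With data only in $H^s$ and no $L^1$ or negative Sobolev assumption, the energy need not decay at any rate. The statement, however, allows the rate $(1+t)^0$ for $k=i=0$, so only boundedness is needed there; the real content begins at $k\ge1$. For $k\ge 1$ I would first try to prove the $\dot H^1$ decay $(1+t)^{-1/2}$ directly from $t\Ltwox{\nabla_x\cdot}^2$-weighted estimates, using only $\int_0^\infty\Ltwox{\nabla_x\cdot}^2\,\d t<\infty$, and then bootstrap to higher $k$. This avoids needing any decay of the $L^2$ norm itself.
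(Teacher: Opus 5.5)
Your proposal is correct and has the same architecture as the paper's proof. Both use classical global theory for the 2D Navier--Stokes part and then linear parabolic theory for $m_0$. Both obtain spatial $L^2$ decay for $i=0$, then trade each $\partial_t$ for $\Delta_x$ plus faster-decaying nonlinear terms through the equations. Both finish with $\|f\|_{L^\infty_x}\lesssim\|f\|_{L^2_x}^{1/2}\|\nabla_x^2 f\|_{L^2_x}^{1/2}$ for the $L^\infty_x$ rates. The one real difference is the $i=0$ step. The paper simply cites Schonbek's Fourier splitting. You use time-weighted energy estimates with induction in $k$, having correctly noted that the $k=0$ rate is mere boundedness, so no $L^1$ or low-frequency hypothesis is needed. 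The two tools are equivalent here, since Fourier splitting applied to $\|\nabla_x^k\cdot\|_{L^2_x}$ also needs only $\sup_t\|\cdot\|_{L^2_x}<\infty$. Splitting passes from the $L^2$ bound to the $\dot H^k$ rate in one step; your route is self-contained but inductive.

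Two technical points need care when you write it out.

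First, $L^2$ is scaling-critical in 2D, so every nonlinear term in your weighted inequality is exactly critical. If you bound these terms only through the pointwise rates of lower-order norms, as your phrase ``absorbed using the lower-order decay'' suggests, you get a Gronwall factor of size $(1+t)^{-1}$ and lose a power of $t$. Instead, interpolate between $\|\nabla_x\cdot\|_{L^2_x}$ and $\|\nabla_x^{k+1}\cdot\|_{L^2_x}$. For the velocity this bounds the commutator by $\tfrac14\|\nabla_x^{k+1}u_0\|_{L^2_x}^2+C\|\nabla_x u_0\|_{L^2_x}^{2k+2}$. Then $(1+t)^k\|\nabla_x u_0\|_{L^2_x}^{2k+2}\le C\|\nabla_x u_0\|_{L^2_x}^2$, which is integrable on $(0,\infty)$ by the energy identity. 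Analogously, for $m_0$ the already established rates of $u_0$ let you bound the weighted remainder by time-integrable quantities such as $\|\nabla_x m_0\|_{L^2_x}^2$. Alternatively, localize the weights to dyadic intervals $[t/2,t]$.

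Second, in your Step 1, energy plus vorticity dissipation only gives $\nabla_x u_0\in L^2_tH^1_x$, and this does not embed into $L^2_tL^\infty_x$ in 2D. You need one more level before claiming $\int_0^\infty\|\nabla_x u_0\|_{L^\infty_x}^2\,\mathrm{d}t<\infty$: the $\dot H^1$ vorticity estimate, whose Gronwall factor $\|\nabla_x u_0\|_{L^2_x}^2$ is integrable. Similarly, the Kato--Ponce term $\|\nabla_x u_0\|_{L^\infty_x}\|\nabla_x^s u_0\|_{L^2_x}^2$ must first be interpolated via $\|\nabla_x^s u_0\|_{L^2_x}^2\le\|\nabla_x^{s-1}u_0\|_{L^2_x}\|\nabla_x^{s+1}u_0\|_{L^2_x}$. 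Only then does the squared $L^\infty_x$ factor enter.
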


\begin{proof}
	The existence and uniqueness of the global solution to the system of $u_0$ for $s=2$ in (\ref{eq:regularity limit equation}) can be referred to \cite{Ladyzhenskaia1959,Schonbek1996}. For more regularity of $u_0$ with $s\ge6$, it follows from standard energy methods. With the known function $u_0$, the proof of existence and uniqueness of the global solution to the system of $m_0$ for $s\ge6$ can be conducted by using the standard theory for linear parabolic equation.

 For $i=0$ in (\ref{decay_rates_Hm}), applying the Fourier splitting method (see e.g., \cite[Theorem 3.4]{Schonbek1996}), $u_0$ and $m_0$ enjoy the time-decay rates:
	\begin{equation}\label{eq:spatial_decay}
	\|\nabla_x^k m_0(t)\|_{L^2_x} + \|\nabla_x^k u_0(t)\|_{L^2_x} \le C_0(1+t)^{-\frac{k}{2}},
	\end{equation}for any integer $k \in [0, s].$
To obtain the decay estimates of the time-space derivatives for $i \ge 1$, we proceed by induction on $i$ using the system \eqref{limit}. Namely, suppose that (\ref{decay_rates_Hm}) holds for $i$ replaced by $i-1$ and for any $k\le s-2(i-1)$. For (\ref{decay_rates_Hm}) with $i,k$ satisfying $k+2i \le s$, applying the Leray projector $\mathcal{P}$ to $\eqref{limit}_2$, we can rewrite the system as $\partial_t u_0 = \Delta_x u_0 - \mathcal{P}(u_0 \cdot \nabla_x u_0)$ and $\partial_t m_0 = \Delta_x m_0 - u_0 \cdot \nabla_x m_0$. Taking $\partial_t^{i-1}\nabla_x^k$ on both sides of these equations, the linear dissipative terms yield a decay rate equivalent to two additional spatial derivatives, namely $\|\partial_t^{i-1}\nabla_x^{k+2} u_0(t)\|_{L^2_x} \le C_0(1+t)^{-\frac{k+2}{2}-(i-1)} = C_0(1+t)^{-\frac{k}{2}-i}$. By the Leibniz rule, H\"older's inequality, and the Gagliardo-Nirenberg inequality, the nonlinear terms decay at least as fast as the linear terms. Thus, each temporal derivative $\partial_t$ acts equivalently to $\Delta_x$ in terms of the decay scaling. By induction, we get (\ref{decay_rates_Hm}) for any integers $i,k\ge0$ satisfying $k+2i \le s$.

	Finally, for any $0 \le k \le s-2i-2$, applying the interpolation inequality:
	$$\| f \|_{L^\infty_x} \le C \| f \|_{L^2_x}^{\frac{1}{2}} \|\nabla_x^2 f \|_{L^2_x}^{\frac{1}{2}}$$
	for all $f\in H^2_x(\mathbb{R}^2)$, yields the $L^\infty_x$ decay rates:
	$$\|\partial_t^i \nabla_x^k f \|_{L^\infty_x} \le C \|\partial_t^i \nabla_x^k f \|_{L^2_x}^{\frac{1}{2}} \|\partial_t^i \nabla_x^{k+2} f \|_{L^2_x}^{\frac{1}{2}} \le C (1+t)^{-\frac{k}{4}-\frac{i}{2}} (1+t)^{-\frac{k+2}{4}-\frac{i}{2}} = C(1+t)^{-\frac{1+k}{2}-i}.$$
Choosing $f=(m_0,u_0)$ yields (\ref{decay_rates_infty_Hm}).	The proof of Proposition \ref{prop_limit_decay} is complete.
\end{proof}


\subsection{The error system}
Before deriving the error system, we recall the standard micro-macro decomposition. Denote $f^\varepsilon=g\sqrt{\M}$, $u^\varepsilon=u$, and $P^\varepsilon=P$ where
\begin{equation*}
\M=\frac{1}{2\pi}\exp\left\{-\frac{|v|^2}{2}\right\},
\end{equation*}
and $g$ is decomposed into a macroscopic part $\P g$ and a microscopic part $\mi g$ as follows:
\begin{equation}
\label{decom}
g=\P g+\mi g.
\end{equation}
The projection operator $\P$ is defined by
\begin{equation}\label{eq:Pg}
\P=\P_0+\P_1,\quad \P_0g=a\sqrt{\M},\quad \P_1g=b\cdot v\sqrt{\M},
\end{equation}
where
$$
a=\intv{g\sqrt{\M}},\quad b=\intv{vg\sqrt{\M}}.
$$
It is easy to verify that
\begin{equation}\label{eq:orthe}
\inv{\P_0f,\P_1g}=0,\quad\inv{\mi f, \P_0 g}=0,\quad \inv{\mi f, \P_1g}=0.
\end{equation}

Define the linear Fokker-Planck operator $\L$ as follows:
$$
\L g=-\frac{1}{\sqrt{\M}}\div_v\left[\M\nabla_v\left(\frac{g}{\sqrt{\M}}\right)\right],
$$
where $\L$ satisfies a dissipative property, that is, there exists a constant $c>0$ such that
\begin{equation}
c\Lv{\mi g}^2+\Ltwox{b}^2\leq \intxv{g\L g}.
\end{equation}

With (\ref{f}), we obtain the following system:
\begin{equation}
\label{g}
\left\{
\begin{array}{ll}
\partial_t g+\frac{1}{\varepsilon}v\cdot \nabla_xg+\frac{1}{\varepsilon^2}\L g+\frac{1}{\varepsilon}u\cdot\nabla_vg-\frac{1}{2\varepsilon}v\cdot ug=0,\\[1.5mm]
\partial_t a+\frac{1}{\varepsilon}\div_xb=0,\\ [1.5mm]
\partial_t b+\frac{1}{\varepsilon}\nabla_x a+\frac{b}{\varepsilon^2}-\frac{au}{\varepsilon}+\frac{1}{\varepsilon}\intv{vv\cdot \nabla_x(\mi g\sqrt{\M})}=0,\\ [1.5mm]
\partial_t u+u\cdot \nabla_xu+\nabla_x P-\Delta_x u-\frac{b}{\varepsilon}+au=0,\\ [1.5mm]
(g,a,b,u)|_{t=0}=(g^{in}(x,v),a^{in}(x),b^{in}(x),u^{in}(x)).
\end{array}
\right.
\end{equation}
In fact, the first equation in \eqref{g} can be derived directly by using (\ref{f}), and the second equation is obtained by multiplying the first equation by $\sqrt{\M}$ and integrating the result with respect to $v$ over $\mathbb{R}^2$. The third equation is derived by multiplying the first equation by $v\sqrt{\M}$ and integrating over $\mathbb{R}^2$. The initial data satisfy the following compatibility conditions:
\begin{equation*}
a^{in}=\intv{g^{in}\sqrt{\M}},\quad b^{in}=\intv{vg^{in}\sqrt{\M}},\quad u^{in}=u^{\varepsilon,in}.
\end{equation*}

Substituting the decomposition \eqref{decom} into the first equation of \eqref{g}, we obtain the equation for the microscopic component:
\begin{equation}
\label{i-p}
\begin{aligned}
\partial_t (\mi g) + \frac{1}{\varepsilon}v\cdot \nabla_x(\mi g)+\frac{1}{\varepsilon^2}\L (\mi g) + R_{mi}=0,
\end{aligned}
\end{equation}
where the source term $R_{mi}$ is given by
\begin{equation*}
\begin{aligned}
R_{mi}=&  \frac{1}{\varepsilon}v\cdot\nabla_x(b\cdot v)\sqrt{\M} - \frac{1}{\varepsilon}(\div_x b)\sqrt{\M} + \frac{1}{\varepsilon}(u\cdot b)\sqrt{\M} - \frac{1}{\varepsilon}(u\cdot v)(b\cdot v)\sqrt{\M} \\
&+ \frac{1}{\varepsilon}u\cdot \nabla_v(\mi g) - \frac{1}{2\varepsilon}v\cdot u \mi g - \frac{1}{\varepsilon}\left(\int vv\cdot \nabla_x(\mi g\sqrt{\M})\mathrm{d}v \right) \cdot v\sqrt{\M},
\end{aligned}
\end{equation*}
subject to the initial condition
\begin{equation*}
\mi g(t,x,v)|_{t=0}=\mi g^{in}=g^{in}-a^{in}\sqrt{\M}-b^{in}\cdot v\sqrt{\M}.
\end{equation*}

Next, to construct the error system, we define the macroscopic perturbation variables:
\begin{equation*}
\phi=a-m_0,\quad \psi=u-u_0,\quad \tilde{b}=b-\varepsilon\bar{b},
\end{equation*}	
where $\bar{b} = m_0 u_0 - \nabla_x m_0$. The motivation for introducing the modified flux $\tilde b$ is directly from the macroscopic error equation $\eqref{eq:error}_1$ for $ \phi$ (\cite{Fang-Qi-Wen2024}). If the density error is defined by $\phi=a-m_0$ and the original variable $b$ is used, then $\phi$ satisfies $$\partial_t\phi+\frac1\varepsilon\div_x b=-\partial_t m_0.$$
The source term $-\partial_t m_0$ is of order $\mathcal O(1)$ and does not vanish as $\varepsilon\to0$. Using the limit equation $\partial_t m_0+\div_x(m_0u_0-\nabla_xm_0)=0$,	one obtains $\partial_t\phi+\frac1\varepsilon\div_x\tilde b=0$, which removes the non-vanishing source term from the leading-order error system. Then, the error system can be written as
\begin{equation}
\label{eq:error}
\left\{
\begin{aligned}
&\partial_t \phi + \frac{1}{\varepsilon}\div_x \tilde{b} = 0, \\[1.5mm]
&\varepsilon \partial_t \tilde{b} + \frac{\tilde{b}}{\varepsilon} + \nabla_x \phi = \tilde{R}_2, \\[1.5mm]
&\partial_t \psi - \Delta_x \psi + \nabla_x (\tilde{P}+ m_0) - \frac{\tilde{b}}{\varepsilon} = \tilde{R}_3, \\[1.5mm]
&\partial_t \psi - \Delta_x \psi + \nabla_x (\tilde{P} + m_0 + \phi) + \varepsilon \partial_t \tilde{b} = \tilde{R}_4, \\[1.5mm]
&\div_x \psi = 0, \\[1.5mm]
&(\phi, \tilde{b}, \psi)|_{t=0}=(\phi^{in}, b^{in} - \varepsilon(m_0^{in} u_0^{in} - \nabla_x m_0^{in}), \psi^{in}),
\end{aligned}
\right.
\end{equation}
where the perturbation pressure is defined as $\tilde{P} = P - P_0$. The remainder terms $\tilde{R}_2, \tilde{R}_3$, and $\tilde{R}_4$ are explicitly given by:
\begin{equation}
\label{eq:remainders}
\left\{
\begin{aligned}
\tilde{R}_2 &= \phi u_0 + m_0 \psi + \phi \psi - \div_x \left( \intv{v\otimes v \mi g\sqrt{\M}} \right) - \varepsilon^2 \partial_t \bar{b}, \\[2mm]
\tilde{R}_3 &= -u_0\cdot \nabla_x \psi - \psi\cdot \nabla_x u_0 - \psi\cdot \nabla_x \psi - m_0 \psi - \phi u_0 - \phi \psi, \\[2mm]
\tilde{R}_4 &= -u_0\cdot \nabla_x \psi - \psi\cdot \nabla_x u_0 - \psi\cdot \nabla_x \psi - \div_x \left( \intv{v\otimes v \mi g\sqrt{\M}} \right) - \varepsilon^2 \partial_t \bar{b}.
\end{aligned}
\right.
\end{equation}
	
\medskip

To establish the global existence of solutions to \eqref{eq:error} and \eqref{eq:remainders}, we define the energy functional $\E(t)$ and the dissipation functional $\D(t)$ as
\begin{equation}\label{eq:def E}
\begin{aligned}
\E(t)=\Hthreex{(\phi,\psi,\tilde{b})}^2+\varepsilon^2\Hthreex{\nabla_x \psi}^2+\varepsilon^2\HxxxxLv{ g}^2
\end{aligned}
\end{equation}
and
\begin{equation}\label{eq:def D}
\D(t)=\frac{\Hthreex{\tilde{b}}^2}{\varepsilon^2}+\Hthreex{\nabla_x \psi}^2+\varepsilon^2\Hthreex{\partial_t\psi}^2+\Hthreex{\nabla_x\phi}^2 + \sum_{ |\alpha|\le 4}\Lv{\partial_x^\alpha\mi g}^2 +\Hfourx{b}^2,
\end{equation}
respectively.

\subsection{Main results}

Now we are in the position to state the main results of this paper.
\begin{theorem}\label{thm1}
	Let $(m_0,u_0)$ be the solution to \eqref{limit} given in Proposition \ref{prop_limit_decay}, and assume that the initial data of \eqref{f} satisfy $(f^{\varepsilon,in},u^{\varepsilon,in})=(g^{in}\sqrt{\M},u^{in}) $ with
	\begin{equation*}
	g^{in}\in H^4_xL^2_v\cap H^2_xH^2_v,\quad u^{in}\in H^4_x,\quad \div_x u^{in}=0,
	\end{equation*}
	and $\E(0)\leq \eta $ for a small positive constant $\eta$ which is independent of $\varepsilon$ and time t defined by \eqref{eq:eta-1varepsilon-4} and \eqref{eq:eta-2,varepsilon-4}. Then, there exists a sufficiently small constant $\varepsilon_0 > 0$ defined by \eqref{eq:varepsilon-0}, \eqref{eq:varepsilon-1}, \eqref{eq:varepsilon-2}, \eqref{eq:varepsilon-3}, \eqref{eq:eta-1varepsilon-4} and \eqref{eq:eta-2,varepsilon-4}, such that for any given $\varepsilon \in (0, \varepsilon_0)$, the system \eqref{f} admits a unique global strong solution $(f^\varepsilon,u^\varepsilon):=(g\sqrt{\M},u)$ which satisfies
\begin{equation}\label{eq:uniform}
	\begin{cases}
\displaystyle\E(t) \le \bar{C}(\E(0) + \varepsilon^2), \quad \int_{0}^{t} \D(s) \d s \le \bar{C}_1(\E(0) + \varepsilon^2),\\[4mm]
	\displaystyle\HxxHvv{\mi g}^2 + \int_{0}^{t} \sum_{|\alpha|\le 2} \sum_{|\beta|\le 2} \frac{\Lv{\partial_x^\alpha \partial_v^\beta (\mi g)}^2}{\varepsilon^2} \d s \\
	\le C\left(\E(0) + \varepsilon^2 \right) + C\HxxHvv{(\mi g)^{in}}^2,
	\end{cases}
	\end{equation}
	for all $t \in [0, +\infty)$, where $\bar{C}$, $\bar{C}_1$, $C\ge1$ are positive generic constants independent of $\varepsilon$ and time $t$.	

	In particular, the following uniform estimate:
	\begin{equation*}
	\begin{aligned}
	\Linftyxv{f^\varepsilon-m_0\M}^2+\Linftyx{u^\varepsilon-u_0}^2
	\le C\left(\E(0) + \varepsilon^2 \right)+C\HxxHvv{(\mi g)^{in}}^2,
	\end{aligned}
	\end{equation*} holds for all $t \in [0, +\infty)$.
\end{theorem}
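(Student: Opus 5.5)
The proof follows the standard pattern of local existence plus uniform a priori estimates plus continuity. First, local well-posedness of \eqref{f} in the class $g\in H^4_xL^2_v\cap H^2_xH^2_v$, $u\in H^4_x$ follows from a standard iteration, since the equations are a linear Fokker--Planck equation coupled to a Navier--Stokes equation with a bounded drag. The continuation argument is then set up with the a priori assumption
\[
\sup_{0\le s\le T}\E(s)\le 2\bar C(\E(0)+\varepsilon^2)=:\delta_0,
\qquad
\sup_{0\le s\le T}\HxxHvv{\mi g}^2\le 2M,
\]
where $M=C(\E(0)+\varepsilon^2)+C\HxxHvv{(\mi g)^{in}}^2$. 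Under this assumption I will show the strictly improved bounds \eqref{eq:uniform}, which closes the bootstrap for $\eta,\varepsilon_0$ small. Throughout, the background $(m_0,u_0)$ is large but decays by Proposition \ref{prop_limit_decay}. Every linear term with a background coefficient, such as $m_0\psi$, $\phi u_0$, $u_0\cdot\nabla_x\psi$, or $\psi\cdot\nabla_x u_0$, will be estimated using $\|\nabla_x u_0\|_{L^\infty_x}+\|u_0\|_{L^\infty_x}^2\lesssim C_0(1+t)^{-1}$ together with the $L^2_t$ integrability of $\nabla_x(m_0,u_0)$ in \eqref{eq:regularity limit equation}. Gronwall then produces a factor $\exp(C\int_0^\infty(\|\nabla_x u_0\|_{L^\infty}^2+\|\nabla_x m_0\|^2_{H^s})\,\d t)$, which is bounded depending only on $C_0$. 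Here the largeness enters only through $\bar C$, not through any smallness requirement.

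The energy estimates are taken in the order of Lemmas \ref{lem:phi}--\ref{lem:g}. (i) For $\partial_x^\alpha$ with $|\alpha|\le 3$, apply $\partial_x^\alpha$ to $\eqref{eq:error}_1$ and $\eqref{eq:error}_2$ and pair with $\partial_x^\alpha\phi$ and $\partial_x^\alpha\tilde b$. The $\nabla_x\phi$--$\div_x\tilde b$ cross terms cancel, which yields $\frac{\d}{\d t}(\|\phi\|^2+\varepsilon^2\|\tilde b\|^2)/2+\|\tilde b\|^2/\varepsilon^2\le\langle\tilde R_2,\tilde b\rangle$. A dissipation for $\nabla_x\phi$ comes from pairing $\eqref{eq:error}_2$ with $\varepsilon\nabla_x\partial_x^\alpha\phi$ and using $\eqref{eq:error}_1$ to trade $\partial_t\phi$ for $\div_x\tilde b/\varepsilon$. (ii) For $\psi$, use $\eqref{eq:error}_3$, not $\eqref{eq:error}_4$, as emphasised in the introduction. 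The term $\tilde b/\varepsilon$ is absorbed by $\|\tilde b\|^2/\varepsilon^2$. The $\varepsilon^2\|\partial_t\psi\|^2$ and $\varepsilon^2\|\nabla_x\psi\|^2_{H^3}$ parts come from pairing with $\varepsilon^2\partial_t\partial_x^\alpha\psi$. In that pairing the source $\tilde b/\varepsilon$ gives $\varepsilon\|\tilde b\|\|\partial_t\psi\|$, which is harmless. (iii) For $g$, apply $\partial_x^\alpha$ with $|\alpha|\le4$ to $\eqref{g}_1$ and multiply by $\varepsilon^2\partial_x^\alpha g$. The coercivity of $\L$ gives $\sum\Lv{\partial^\alpha_x\mi g}^2+\|b\|^2_{H^4}$. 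The $u$-terms $\varepsilon u\cdot\nabla_v g$ and $\varepsilon v\cdot u g$ are controlled by $\varepsilon\|u\|_{L^\infty}$ times the $\nu$-norm, and $u=u_0+\psi$ is bounded. The remainder $\div_x\int v\otimes v\mi g\sqrt\M$ in $\tilde R_2$ is bounded by the $\mi g$ dissipation from (iii), and $\varepsilon^2\partial_t\bar b$ contributes the $\varepsilon^2$ source through \eqref{decay_rates_Hm}. Adding suitable multiples of (i)--(iii) yields
\[
\tfrac{\d}{\d t}\E_1+c\D\le C\,h(t)\E+C\sqrt{\E}\,\D+C\varepsilon^2 h(t),
\]
with $\E_1\sim\E$ and $h\in L^1(0,\infty)$ built from the decay rates. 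The bootstrap assumption $\E\le\delta_0\ll1$ absorbs the term $\sqrt{\E}\D$, and Gronwall gives the first line of \eqref{eq:uniform}.

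The second line handles the ill-prepared data. Differentiate \eqref{i-p} by $\partial_x^\alpha\partial_v^\beta$ with $|\alpha|,|\beta|\le2$ and take a weighted sum in which $v$-derivatives carry smaller weights. The commutator $[\partial_v,v\cdot\nabla_x]$ produces an $x$-derivative, which is controlled by lower-$\beta$ terms. Commutators with $\L$ are controlled by hypocoercivity of the Fokker--Planck operator. The resulting estimate has the form $\frac{\d}{\d t}Y+\frac{c}{\varepsilon^2}\sum\Lv{\partial^\alpha_x\partial^\beta_v\mi g}^2\lesssim \frac1{\varepsilon^2}\|R\|^2$, where $R$ collects $\nabla_x b$ and $u\,b$, which are bounded by $\D$, and $u\cdot\nabla_v\mi g$, which is absorbed. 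Integrating in time then gives the bound with the initial-layer term $\HxxHvv{(\mi g)^{in}}^2$, which is not small. The $L^\infty$ statement follows from $f^\varepsilon-m_0\M=(\phi\sqrt\M+b\cdot v\sqrt\M+\mi g)\sqrt\M$ together with the embedding $H^2_xH^2_v\subset L^\infty_{x,v}$ in two dimensions.

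\textbf{Main obstacle.} I expect the main obstacle to be the coupling with ill-prepared, $\mathcal O(1)$ data in $\mi g$. On the initial layer $[0,T_0]$, the source $\frac1\varepsilon\div_x\int v\otimes v\mi g\sqrt\M$ in $\tilde R_2$ is not small a priori. The $\E$-estimate therefore has to use $\int_0^t\Lv{\partial_x^\alpha\mi g}^2\,\d s$, which is supplied by (iii), rather than pointwise smallness. At top order ($|\alpha|=4$) this dissipation comes only with the weight $\varepsilon^2$ inside $\E$, so the derivative counting has to be arranged carefully. The cross terms must also avoid the $\varepsilon\partial_t\tilde b\cdot\partial_t\psi$ term. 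If a direct closure fails, I would first try a separate short-time estimate on $[0,T_0]$ with $T_0\sim\varepsilon^2|\log\varepsilon|$. On that interval the exponential decay $e^{-ct/\varepsilon^2}$ of $\mi g$ makes it $O(\varepsilon)$ by time $T_0$, and I would restart the global argument from $T_0$.
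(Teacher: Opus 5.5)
There is a genuine gap in step (ii). Your overall architecture matches the paper: local existence, a bootstrap on $\E$, $H^3_x$ estimates for $(\phi,\tilde b)$, the $\varepsilon^2$-weighted $H^4_xL^2_v$ estimate for $g$, then $H^2_xH^2_v$ for $\mi g$. The failure is in the basic energy estimate for $\psi$. Pairing $\eqref{eq:error}_3$ with $\partial_x^\alpha\psi$ produces $\varepsilon^{-1}\int\partial_x^\alpha\tilde b\cdot\partial_x^\alpha\psi\,\d x$. For $|\alpha|=0$ this term cannot be absorbed by $\Hthreex{\tilde b}^2/\varepsilon^2$:
\begin{itemize}
\item Young's inequality leaves $C_\delta\Ltwox{\psi}^2$ with an $\mathcal O(1)$, non-decaying coefficient. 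But $\D$ contains only $\nabla_x\psi$, and there is no Poincar\'e inequality on $\mathbb R^2$.
\item The alternative bound $\varepsilon^{-1}\Ltwox{\tilde b}\Ltwox{\psi}\le\D^{1/2}\E^{1/2}$ is not time-integrable either.
\end{itemize}
So your $h$ would contain a constant, and Gronwall gives $e^{Ct}$ growth rather than a uniform bound. The terms $m_0\psi$ and $\phi u_0$ in $\tilde R_2,\tilde R_3$ are harmless only because the background factor decays: $\Vert m_0\Vert_{L^4_x}^2\lesssim\Ltwox{m_0}\Ltwox{\nabla_x m_0}$ with $\nabla_x m_0\in L^2_t$. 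By contrast, $\tilde b/\varepsilon$ enters with coefficient $1$. This is exactly the vacuum obstruction described in the introduction. With $B=\tilde b/\varepsilon$, the drag coupling gives $-\int(|B|^2-(1+m_0)B\cdot\psi+m_0|\psi|^2)\,\d x$, which has no sign unless $m_0\equiv1$; in particular it fails near vacuum, where $m_0\to0$.

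The missing idea is to use $\eqref{eq:error}_4=\eqref{eq:error}_2+\eqref{eq:error}_3$ for the pairing with $\partial_x^\alpha\psi$.
\begin{itemize}
\item In $\eqref{eq:error}_4$ the term $\tilde b/\varepsilon$ has been eliminated, and $\nabla_x(\tilde P+m_0+\phi)$ drops out because $\div_x\psi=0$.
\item The new term is rewritten as $\varepsilon\int\partial_x^\alpha\partial_t\tilde b\cdot\partial_x^\alpha\psi=\varepsilon\dt{t}\int\partial_x^\alpha\tilde b\cdot\partial_x^\alpha\psi-\varepsilon\int\partial_x^\alpha\tilde b\cdot\partial_x^\alpha\partial_t\psi$.
\item The last piece is at most $\delta\varepsilon^2\Hthreex{\partial_t\psi}^2+C_\delta\Hthreex{\tilde b}^2$.
\end{itemize}
This is why the $\varepsilon^2\Hthreex{\partial_t\psi}^2$ dissipation is needed. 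It comes from $\eqref{eq:error}_3$ paired with $\varepsilon^2\partial_x^\alpha\partial_t\psi$, as you describe. The introduction's remark about using $\eqref{eq:error}_3$ instead of $\eqref{eq:error}_4$ refers only to this $\partial_t\psi$ pairing. There, $\eqref{eq:error}_4$ would create $\varepsilon^3\int\partial_t\tilde b\cdot\partial_t\psi$, and no dissipation of $\partial_t\tilde b$ is available (see Lemma \ref{lem:psi}).

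Two smaller slips:
\begin{itemize}
\item In (i) the multiplier must be $\varepsilon^{-1}\partial_x^\alpha\tilde b$ for the cross terms to cancel. This yields the energy $\Hthreex{\phi}^2+\Hthreex{\tilde b}^2$, not $\varepsilon^2\Hthreex{\tilde b}^2$, which is what $\E$ requires.
\item In (iii), bounding the $u$-terms by ``$\varepsilon\Vert u\Vert_{L^\infty_x}$ times the $\nu$-norm'' is too crude. The function $g$ contains the undissipated $\mathcal O(1)$ part $a=m_0+\phi$, for example in the term $\varepsilon\int a\,u\cdot b\,\d x$. You need the macro--micro splitting, orthogonality and the background decay, as in Lemma \ref{lem:g}.
\end{itemize}
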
	

\begin{remark}\label{remark:1}
It should be noticed that (\ref{eq:uniform}) in Theorem \ref{thm1} holds without requiring the initial perturbations $\E(0)$ and $\HxxHvv{(\mi g)^{in}}^2$ to vanish as $\varepsilon \to 0$. (\ref{eq:uniform}) directly yields the following decay rate regarding $\varepsilon$
\begin{equation*}
\Linftyxv{f^\varepsilon-m_0\M}^2+\Linftyx{u^\varepsilon-u_0}^2 \le C(\varepsilon^\gamma+\varepsilon^2), \quad \forall t \ge 0,
\end{equation*} if both $\E(0)$ and $\HxxHvv{(\mi g)^{in}}^2$ are bounded by $\mathcal{O}(\varepsilon^\gamma)$ for some $\gamma>0$.
Moreover, it does not require any smallness assumptions on the initial data for the original system and the target system, although the small perturbations between initial velocities and between initial macroscopic components for the kinetic quantity are still needed.
\end{remark}

\begin{remark}
	From a physical point of view, the initial data $(f^{\varepsilon,in}, m_0^{in})$ should be non-negative. However, our analysis does not require this non-negativity condition, thereby allowing for a more general class of initial data.
\end{remark}

The second main result aims to remove the condition of vanishing of the initial microscopic component $\HxxHvv{(\mi g)^{in}}^2$ on the right-hand side of (\ref{eq:uniform}) when the hydrodynamic limit as $\varepsilon$ vanishes is taken into consideration.
\begin{theorem}\label{thm2}
	Under the assumptions of Theorem \ref{thm1}, for any given $\tau \in(0,1)$, there exists a sufficiently small constant $\varepsilon_1 = \varepsilon_1(\tau) > 0$ such that if the initial data additionally satisfy
	\begin{equation*}
	\frac{\bar{C}\E_1(0)}{\tau}\le \eta,
	\end{equation*}
	where
	\begin{equation*}
	\E_1(0) := \Htwox{(\phi^{in}, \psi^{in})}^2 + C\varepsilon^2\left( \E(0) + \HxxxLv{(\mi g)^{in}}^2 + \HxxHvv{(\mi g)^{in}}^2 + 1 \right),
	\end{equation*}
	then for all $\varepsilon \in (0, \varepsilon_1)$, the following uniform estimate
	\begin{equation}\label{eq:uniform2}
	\E(t) + \HxxHvv{\mi g}^2 \le\frac{\bar{C}\E_1(0)}{\tau}
	\end{equation}
	holds for all $t \ge \tau$.
	In particular, there holds for all $t \in [\tau, +\infty)$:
	\begin{equation}\label{decay_rates_infty2}
	\Linftyxv{f^\varepsilon - m_0\M}^2 + \Linftyx{u^\varepsilon - u_0}^2 \le\frac{\bar{C}\E_1(0)}{\tau}.
	\end{equation}
\end{theorem}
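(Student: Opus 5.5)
The plan is to run a two-stage argument: first an initial-layer analysis on $[0,\tau]$, then a restart of the global argument of Theorem \ref{thm1} from time $\tau$. Theorem \ref{thm1} already gives global existence and the uniform bounds \eqref{eq:uniform}. These are not small, however, because the term $C\HxxHvv{(\mi g)^{in}}^2$ is only $\mathcal{O}(1)$. The key observation is that the microscopic part carries the dissipation $\varepsilon^{-2}\int_0^t\sum\Lv{\partial_x^\alpha\partial_v^\beta\mi g}^2\,\d s$. This should force $\mi g$ to be small at some time in $[0,\tau]$, with smallness of order $\varepsilon^2/\tau$ times bounded quantities.

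\textbf{Step 1 (low-order macroscopic estimate on the layer).} Using the error system \eqref{eq:error}, I would derive an $H^2_x$ energy inequality for $(\phi,\psi)$ (together with $\tilde b$, which sits inside $\E$). In this inequality the only contributions from $\mi g$ enter through $\div_x\intv{v\otimes v\,\mi g\sqrt{\M}}$ in $\tilde R_2,\tilde R_4$. I would pair this term against $\tilde b/\varepsilon$ (or against $\nabla\psi$) and bound it by $\frac{1}{4}\D+C\varepsilon^2\cdot\varepsilon^{-2}\|\mi g\|^2_{H^3_x}$. The time integral of that last quantity is bounded by $C\varepsilon^2(\E(0)+\HxxxLv{(\mi g)^{in}}^2+1)$, using the $H^3_x$ microscopic estimate (this is where the $\HxxxLv{(\mi g)^{in}}$ term in $\E_1(0)$ comes from). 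Large background coefficients $m_0,u_0$ are handled via the decay \eqref{decay_rates_infty_Hm} and Gronwall, which is time-integrable. The forcing $\varepsilon^2\partial_t\bar b$ contributes $\mathcal{O}(\varepsilon^2)$. The result I aim for is
\[
\sup_{t\ge0}\Htwox{(\phi,\psi)}^2+\int_0^\infty \Big(\frac{\Htwox{\tilde b}^2}{\varepsilon^2}+\Htwox{\nabla_x(\phi,\psi)}^2\Big)\d s\le C\E_1(0).
\]

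\textbf{Step 2 (higher norms become small after time $\tau$).} To upgrade to the full $\E$ together with $\HxxHvv{\mi g}$, I would use a time-weighted estimate. Multiplying the full energy inequality behind \eqref{eq:uniform}, namely $\frac{d}{dt}\E+c\D\le(\text{small})\D+\ldots$, by $t$ gives
\[
t\E(t)\le\int_0^t\E\,\d s+\ldots
\]
The integral $\int_0^t\E\,\d s$ is controlled by the dissipation from Step 1 together with $\int\D$ and the $\varepsilon^2$-weighted terms. The point is that the only part of $\E$ that is not dissipated is $\Ltwox{(\phi,\psi)}^2$, which is handled by Step 1. Every other component of $\E$ is bounded by $\D$, or by $\varepsilon^2$ times bounded quantities. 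The microscopic $H^2_xH^2_v$ norm is handled similarly: its dissipation $\varepsilon^{-2}\Lv{\cdot}^2$ makes the weighted quantity $t\HxxHvv{\mi g}^2$ bounded by $C\varepsilon^2(\text{data})$. This yields
\[
\E(t)+\HxxHvv{\mi g}^2\le \frac{\bar C\E_1(0)}{t},\qquad t\in(0,T].
\]
In particular the bound holds at $t=\tau$. Alternatively, I could pick a time $t_*\in[\tau/2,\tau]$ by the mean value theorem and propagate forward from it.

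\textbf{Step 3 (restart).} At time $\tau$, the hypothesis $\bar C\E_1(0)/\tau\le\eta$ gives exactly the smallness required by Theorem \ref{thm1} with new initial time $\tau$. Now $(\mi g)(\tau)$ is also small, so the bound obtained from Theorem \ref{thm1} no longer contains an $\mathcal{O}(1)$ term, and the continuation argument closes \eqref{eq:uniform2} for all $t\ge\tau$. The restart uses the decay of $(m_0,u_0)$ from $\tau$ onward, which is at least as good as from time $0$. The $L^\infty$ statement \eqref{decay_rates_infty2} then follows by Sobolev embedding from $H^2_xH^2_v$ and $H^2_x$, exactly as in Theorem \ref{thm1}.

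\textbf{Main obstacle.} I expect Step 2 to be the hardest part, namely controlling the large-data cross terms and the $\mathcal{O}(1)$ microscopic data on $[0,\tau]$ in the weighted estimate. The commutator terms $\varepsilon^{-1}u\cdot\nabla_v\mi g$ and $\varepsilon^{-1}v\cdot u\,\mi g$ in the $v$-derivative estimates must be absorbed by the $\varepsilon^{-2}$ dissipation. Requiring $\varepsilon<\varepsilon_1(\tau)$ is what makes this absorption possible uniformly on the layer.
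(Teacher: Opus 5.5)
There is a genuine gap in Step 1. In \eqref{eq:error}, you cancel the coupling $\varepsilon^{-1}\intx{\partial_x^\alpha\phi\,\div_x\partial_x^\alpha\tilde b}$ from the $\phi$-equation by testing $\eqref{eq:error}_2$ with $\partial_x^\alpha\tilde b/\varepsilon$, as you propose. That puts $\Htwox{\tilde b}^2$ into the energy with weight one, so the initial energy contains $\Htwox{\tilde b^{in}}^2$. This quantity is only known to be $\le\eta$, whereas $\E_1(0)$ contains $\tilde b^{in}$ only through $C\varepsilon^2\E(0)$.

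Your target bound $\int_0^\infty\Htwox{\tilde b}^2/\varepsilon^2\,\d s\le C\E_1(0)$ is in fact false in general. By $\eqref{eq:error}_2$, $\tilde b(t)= e^{-t/\varepsilon^2}\tilde b^{in}+\mathcal O(\varepsilon)$ on a layer of width $\varepsilon^2$, so $\int_0^{\varepsilon^2}\Htwox{\tilde b}^2/\varepsilon^2\,\d s\gtrsim\Htwox{\tilde b^{in}}^2$. Yet for $\phi^{in}=\psi^{in}=0$ one has $\E_1(0)=\mathcal O(\varepsilon^2)$. The same defect appears if the momentum equation is tested in the form $\eqref{eq:error}_3$, through $\varepsilon^{-1}\intx{\partial_x^\alpha\tilde b\cdot\partial_x^\alpha\psi}$. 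So your Step 1 can at best give $\Htwox{(\phi,\psi)}^2\lesssim\E_1(0)+\Htwox{\tilde b^{in}}^2$. Everything downstream inherits this $\mathcal O(\eta)$ term, and you would not reach \eqref{eq:uniform2}.

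The missing idea is the reformulation \eqref{eq:error1}. Take $\div_x$ of $\eqref{eq:error}_2$ and use $\eqref{eq:error}_1$ to obtain the parabolic equation $\partial_t\phi-\Delta_x\phi-\varepsilon\partial_t\div_x\tilde b=-\div_x\tilde R_2$, and test the momentum equation in the form $\eqref{eq:error}_4$. Then $\tilde b$ enters only in three ways:
\begin{itemize}
\item through the $\varepsilon$-weighted cross terms $\varepsilon\intx{\div_x\partial_x^\alpha\tilde b\,\partial_x^\alpha\phi}$ and $\varepsilon\intx{\partial_x^\alpha\tilde b\cdot\partial_x^\alpha\psi}$, whose initial values are at most $\Htwox{(\phi^{in},\psi^{in})}^2+C\varepsilon^2\E(0)$;
\item through $\Ltwox{\div_x\partial_x^\alpha\tilde b}^2$;
\item through $\varepsilon\intx{\partial_x^\alpha\tilde b\cdot\partial_x^\alpha\partial_t\psi}$, which is handled by substituting the Leray-projected equation $\partial_t\psi=\Delta_x\psi+\varepsilon^{-1}\mathcal P\tilde b+\mathcal P\tilde R_3$.
\end{itemize}
All of these are controlled by $\int_0^t\Hthreex{\tilde b}^2\,\d s\le\varepsilon^2\bar C_1(\E(0)+\varepsilon^2)$, which Theorem \ref{thm1} already provides. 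This is also all your Step 2 actually needs about $\tilde b$.

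With Step 1 replaced in this way (Lemmas \ref{lem:phipsiH2}--\ref{lem:miH3}), the rest of your plan goes through. Your $t$-weighted estimate in Step 2 is a legitimate alternative to the paper's choice of $T_0\in(\tau/2,\tau)$ by the mean value theorem. The paper's route is simpler, since it uses only unweighted time integrals and no weighted differential inequality. Your Step 3 matches the paper's restart argument.
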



\begin{remark}\label{remark:3}
	The convergence rate $\mathcal{O}\big(\|(\phi^{in},\psi^{in})\|_{H^2_x}^2+\varepsilon^2\big)$ in (\ref{eq:uniform2}) and (\ref{decay_rates_infty2}) appears to be natural in the present framework. The $\mathcal{O}(\varepsilon^2)$ term originates from the background flux correction $\varepsilon(m_0u_0-\nabla_xm_0)$.
\end{remark}
	The rest of this paper is organized as follows. Section \ref{sec:energy} establishes the uniform-in-$\varepsilon$ \textit{a priori} energy estimates for the macroscopic fluid variables, the distribution $g$, and the microscopic component $\mi g$. Section \ref{sec:proof of thm1} synthesizes these bounds to rigorously prove the global-in-time convergence in Theorem \ref{thm1}. Finally, Section \ref{sec:proofthm2} proves Theorem \ref{thm2} by deriving lower-order estimates and justifying initial data that do not require the microscopic part to tend to zero via a time-shift argument.

\section{\textit{A priori} estimates}\label{sec:energy}
In this section, we will establish some uniform-in-$\varepsilon$ \textit{a priori} estimates. The analysis is divided into three parts: estimates for the macroscopic error functions, the component $g$, and the microscopic part $\mi g$. To begin with, we first state the local existence result for the original system \eqref{g}. This combined with the macro-micro decomposition and the definition of the error equations implies the local existence for systems  \eqref{i-p} and \eqref{eq:error}:
\begin{proposition}\label{prop:local_existence}
	Assume that $u^{in} \in H^4_x,\ \div_xu^{in}=0$ and $g^{in} \in H^4_x L^2_v \cap H^2_x H^2_v$. Then there exists a time $T_\varepsilon > 0$, depending only on $\varepsilon$ and the initial data, such that system \eqref{g} admits a unique strong solution $(g,u)$ on $[0, T_\varepsilon]$ with
	\begin{equation}
	\begin{aligned}
	&g\in C([0,T_\varepsilon];H^4_xL^2_v\cap H^2_xH^2_v)\cap L^2(0,T_\varepsilon;H^4_x\mathcal{H}^0_v\cap H^2_x\mathcal{H}^2_v),\\
	&u\in C([0,T_\varepsilon];H^4_x)\cap L^2(0,T_\varepsilon;\mathcal{H}^5_x),\ \partial_tu\in L^2(0,T_\varepsilon;H^3_x).
		\end{aligned}
	\end{equation}
\end{proposition}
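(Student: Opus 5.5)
The statement to prove is Proposition \ref{prop:local_existence}: local existence and uniqueness for \eqref{g} on a time interval $[0,T_\varepsilon]$ that may depend on $\varepsilon$. Since $\varepsilon>0$ is fixed, the factors $1/\varepsilon$ and $1/\varepsilon^2$ are harmless constants. The plan is a standard iteration (contraction) scheme for the coupled kinetic--fluid system, in which the Fokker--Planck dissipation $\L$ supplies the $\mathcal{H}_v$ gain and the Stokes part supplies one extra $x$-derivative for $u$.

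\textbf{Linearized problems.} Given $(g^n,u^n)$ in the solution space
\[
X_T=\big\{g\in C([0,T];H^4_xL^2_v\cap H^2_xH^2_v)\cap L^2(0,T;H^4_x\mathcal{H}^0_v\cap H^2_x\mathcal{H}^2_v),\ u\in C([0,T];H^4_x)\cap L^2(0,T;\mathcal{H}^5_x)\big\},
\]
we define $g^{n+1}$ as the solution of the linear equation
\[
\partial_t g^{n+1}+\frac{1}{\varepsilon}v\cdot\nabla_xg^{n+1}+\frac{1}{\varepsilon^2}\L g^{n+1}+\frac{1}{\varepsilon}u^n\cdot\nabla_vg^{n+1}-\frac{1}{2\varepsilon}v\cdot u^ng^{n+1}=0 .
\]
We define $u^{n+1}$ as the solution of the Stokes-type problem
\[
\partial_tu^{n+1}+u^n\cdot\nabla_xu^{n+1}+\nabla_xP^{n+1}-\Delta_xu^{n+1}=\frac{b^n}{\varepsilon}-a^nu^n,\qquad \div_xu^{n+1}=0,
\]
where $a^n,b^n$ are the moments of $g^n$. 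Existence for the linear kinetic equation follows from a Galerkin/Hermite approximation in $v$ or a vanishing-viscosity regularization. Existence for the linear fluid problem comes from standard linear Stokes theory after Leray projection.

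\textbf{Uniform bounds.} We prove energy bounds uniform in $n$ on a short interval $T$. For $g$, apply $\partial_x^\alpha$ with $|\alpha|\le4$, and $\partial_x^\alpha\partial_v^\beta$ with $|\alpha|\le2$, $|\beta|\le2$. Then take the $L^2_{x,v}$ inner product with the corresponding derivative of $g^{n+1}$. The transport term $v\cdot\nabla_x$ vanishes. The term $\L$ gives $\|\cdot\|_\nu$ dissipation modulo a lower-order $L^2$ term, since $\L=-\Delta_v+\frac{|v|^2}{4}-1$. The commutator $[\partial_v^\beta,v\cdot\nabla_x]=\beta\cdot\nabla_x\partial_v^{\beta-1}$ is controlled by the $H^4_xL^2_v$ part; this is why the mixed space requires $|\alpha|+|\beta|\le4$. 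The commutator $[\partial_v^\beta,\L]$ involves $v\partial_v^{\beta-1}$ and is absorbed by the $\nu$-norm.

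The hardest term is $v\cdot u^n g$, which grows in $v$. Integrating by parts shows that $u^n\cdot\nabla_v$ is skew. The term $\frac{1}{2\varepsilon}\int v\cdot u^n|\partial g|^2$ is bounded by
\[
\frac{1}{\varepsilon}\|u^n\|_{L^\infty_x}\,\|\partial g\|_{L^2}\,\|\partial g\|_\nu \le \frac{1}{4\varepsilon^2}\|\partial g\|_\nu^2+C\|u^n\|_{L^\infty}^2\|\partial g\|^2,
\]
and is absorbed by the dissipation. The $x$-derivatives that fall on $u^n$ are handled by Sobolev embedding $H^2_x\subset L^\infty_x$ in two dimensions, using $u^n\in H^4_x$. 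A Gr\"onwall argument then gives
\[
\sup_{[0,T]}E_{n+1}+\int_0^TD_{n+1}\le C\,E(0)\,e^{C(\varepsilon)T(1+M)}
\]
when $\|(g^n,u^n)\|_{X_T}\le M$. For $u^{n+1}$, the $H^4$ energy estimate gives $u\in L^2\mathcal{H}^5$ and $\partial_tu\in L^2H^3$. The source $b^n/\varepsilon$ lies in $L^\infty H^4$, since $|b|\lesssim\|g\|_{L^2_v}$. Choosing $M$ large and $T$ small closes the induction.

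\textbf{Convergence, uniqueness, continuity.} Contraction is proved in the weaker norm $C([0,T];L^2_{x,v})\times C([0,T];L^2_x)$, together with $L^2$ in time of the $\nu$-norm and of $\nabla u$. Differences of the nonlinear terms are controlled using the uniform high-norm bounds; for example, $\|v(u^n-u^{n-1})g^n\|$ is absorbed into the $\nu$-dissipation as above. The limit lies in $X_T$ by weak-$*$ compactness. Uniqueness follows from the same difference estimate. Time continuity in the top norms follows from the standard Lions--Magenes type argument, or from the energy equality.

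The main obstacle is the unbounded coefficient $v\cdot u\,g$ at top order in the $H^2_xH^2_v$ estimate: it can only be controlled through the Fokker--Planck dissipation, which forces the $1/\varepsilon^2$ versus $1/\varepsilon$ bookkeeping and makes $T_\varepsilon$ depend on $\varepsilon$.
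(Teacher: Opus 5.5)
Your proposal is correct and follows essentially the standard linearized-iteration and energy argument that the paper invokes without details (it only cites Chae--Kang--Lee): $\varepsilon$ is frozen so that the unbounded term $\frac{1}{2\varepsilon}v\cdot u\,g$ is absorbed by the Fokker--Planck dissipation, and the contraction is run in a low norm. Two small precisions. First, the commutator term $\partial_x^{3}\partial_v g$ that arises when $|\alpha|=|\beta|=2$ is controlled by the $H^4_x\mathcal{H}^0_v$ dissipation (or by interpolating $H^4_xL^2_v$ with $H^2_xH^2_v$), not by the $H^4_xL^2_v$ energy alone. Second, time continuity in the top norm should come from the energy equality, for instance via mollification in $x$, which commutes with $v\cdot\nabla_x$; a plain Lions--Magenes lemma does not apply because $v\cdot\nabla_x g$ does not lie in the required dual space.
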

\begin{remark}
The proof of Proposition \ref{prop:local_existence} follows from standard arguments, see for instance \cite{ChaeMyeongju2011}. As the main purpose of the present work is to establish uniform-in-$\varepsilon$ global estimates, we omit the routine details of the proof of local existence for brevity.
\end{remark}

\begin{theorem}\label{thm:priori}
	Under the assumptions of Theorem \ref{thm1}, suppose that
	\begin{equation}
	\E(t) \le 2\bar{C}\left(\E(0) + \varepsilon^2 \right)
	\end{equation}
	holds for all $t \in [0,T] \subset [0,T^\varepsilon_{\max})$, where $T^\varepsilon_{\max}>0$ is the maximal existence time established in Proposition \ref{prop:local_existence}. Then, we have
	\begin{equation}\label{eq:ED}
	\E(t) \le \bar{C}(\E(0) + \varepsilon^2), \quad \int_{0}^{t} \D(s) \d s \le \bar{C}_1(\E(0) + \varepsilon^2),
	\end{equation}
	and
	\begin{equation}\label{eq:mi H2xH2v}
	\begin{aligned}
	&\HxxHvv{\mi g}^2 + \int_{0}^{t} \sum_{|\alpha|\le 2} \sum_{|\beta|\le 2} \frac{\Lv{\partial_x^\alpha \partial_v^\beta (\mi g)}^2}{\varepsilon^2} \d s \\
	\le{}& C\left(\E(0) + \varepsilon^2 \right) + C\HxxHvv{(\mi g)^{in}}^2,
	\end{aligned}
	\end{equation}
	where $\bar{C}$, $\bar{C}_1$, $C\ge1$ are positive generic constants independent of $\varepsilon$, $T$ and $T^\varepsilon_{\max}$.
\end{theorem}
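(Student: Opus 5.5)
We plan a continuity-type closure: under the bootstrap hypothesis $\E(t)\le 2\bar C(\E(0)+\varepsilon^2)$, each estimate below gets constants independent of $\varepsilon$ and $T$. Nonlinear terms are then absorbed by the smallness of $\eta$ and $\varepsilon$. Large background terms are controlled through the time-integrability of the decay rates in Proposition \ref{prop_limit_decay}.

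\emph{Macroscopic errors from \eqref{eq:error}.} Apply $\partial_x^\alpha$, $|\alpha|\le 3$, to $\eqref{eq:error}_1$ and $\eqref{eq:error}_2$. Test $\eqref{eq:error}_1$ with $\partial_x^\alpha\phi$ and $\eqref{eq:error}_2$ with $\partial_x^\alpha\tilde b$; the cross terms $\frac1\varepsilon\div_x\tilde b\,\phi$ and $\nabla_x\phi\cdot\tilde b$ cancel. This gives $\frac{d}{dt}\Hthreex{(\phi,\sqrt\varepsilon\tilde b)}^2+\varepsilon^{-2}\Hthreex{\tilde b}^2$. To recover $\Hthreex{\tilde b}^2$ itself (rather than only $\varepsilon\Hthreex{\tilde b}^2$) in $\E$, also test with the weight $\varepsilon^{-1}$, i.e. work with $\eqref{eq:error}_2$ tested by $\varepsilon^{-1}\partial^\alpha_x\tilde b$ together with $\eqref{eq:error}_1$.

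Dissipation of $\nabla_x\phi$ comes from testing $\eqref{eq:error}_2$ against $\nabla_x\partial_x^\alpha\phi$ (a Kawashima-type interaction functional). The term $\varepsilon\partial_t\tilde b\cdot\nabla_x\phi$ is handled by moving $\partial_t$ onto $\phi$ and using $\eqref{eq:error}_1$, which produces $|\div_x\tilde b|^2$, controlled by $\varepsilon^{-2}\|\tilde b\|^2$ with a small factor.

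For $\psi$, use $\eqref{eq:error}_3$, the Stokes-type equation with source $\tilde b/\varepsilon$, tested by $\partial_x^\alpha\psi$ and $-\Delta_x\partial_x^\alpha\psi$. This gives $\Hthreex{\psi}^2$, $\int\Hthreex{\nabla_x\psi}^2$ and $\varepsilon^2\Hthreex{\nabla_x\psi}^2$. Testing $\varepsilon^2\partial_t\partial_x^\alpha\psi$ gives $\varepsilon^2\int\Hthreex{\partial_t\psi}^2$. The forcing $\langle\tilde b/\varepsilon,\partial_t\psi\rangle\varepsilon^2$ is bounded by $\varepsilon^{-2}\|\tilde b\|^2$ times a small constant plus a small fraction of $\varepsilon^2\|\partial_t\psi\|^2$. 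We deliberately use $\eqref{eq:error}_3$ and not $\eqref{eq:error}_4$, to avoid the cross term $\varepsilon\partial_t\tilde b\cdot\partial_t\psi$. The gradient $\nabla_x m_0$ is annihilated by the divergence-free test functions.

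The remainders in \eqref{eq:remainders} split into three groups. The linear-in-error terms with background coefficients, such as $m_0\psi$, $\phi u_0$ and $u_0\cdot\nabla\psi$, are controlled by $\|(m_0,u_0)\|_{W^{k,\infty}}\E^{1/2}\D^{1/2}$, or by $(1+t)^{-1}\E$. Here $m_0\psi$ is the delicate one, since $m_0$ is large and does not decay in $L^2$. I would pair it with $\tilde b/\varepsilon$ so that it becomes $\varepsilon^{-1}\|\tilde b\|\cdot\varepsilon\|m_0\|_\infty\|\psi\|$, which is small for small $\varepsilon$. The quadratic terms are bounded by $\E^{1/2}\D$. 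The terms $\varepsilon^2\partial_t\bar b$ are $O(\varepsilon)$ sources, integrable in time by \eqref{decay_rates_Hm}, and produce the $\varepsilon^2$ in the bound. Finally, $\div_x\int v\otimes v\,\mi g\sqrt\M$ is bounded by $\Lv{\partial^\alpha_x\mi g}$ and is absorbed by the $g$-estimate.

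\emph{Kinetic estimates.} Apply $\partial_x^\alpha$, $|\alpha|\le4$, to $\eqref{g}_1$ and test with $\varepsilon^2\partial_x^\alpha g$. The dissipation of $\L$ gives $\sum\Lv{\partial_x^\alpha\mi g}^2+\Hfourx b^2$. The coupling terms $\varepsilon u\cdot\nabla_v g$ and $\varepsilon v\cdot u\,g$ are bounded using $\|u\|_{H^4}\le C$ (from $u_0$ plus the small $\psi$), together with the splitting $g=\P g+\mi g$. For the $\P g$ parts, $\int a\,u\cdot b$ is bounded by $\|u\|\,\|b\|\,\|a\|$ with $\varepsilon^2$ in front, which is harmless. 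Next, for $\mi g$ with $\partial_v^\beta$, $|\beta|\le2$ and $|\alpha|+|\beta|\le 4$ restricted to $H^2_xH^2_v$, work on \eqref{i-p} and test with $\partial_x^\alpha\partial_v^\beta\mi g$. The commutator $[\partial_v,v\cdot\nabla_x]$ is controlled by induction on $|\beta|$. The source $R_{mi}$ is bounded by $\varepsilon^{-1}(\|\nabla_x b\|+\ldots)$. Multiplying by $\varepsilon^2$ in the dissipation yields $\int\varepsilon^{-2}\Lv{\cdot}^2\lesssim\int\D+$ initial data, which gives \eqref{eq:mi H2xH2v}.

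\emph{Closing, and the main obstacle.} The main obstacle is closing the large-data terms, i.e. those linear in the error with $O(1)$ background coefficients $m_0,u_0$ and no smallness. I would handle them with a Gronwall argument whose weight $\|\nabla(m_0,u_0)\|_{W^{k,\infty}}^2+\|\nabla u_0\|^2_{H^s}$ is integrable on $(0,\infty)$ by \eqref{eq:regularity limit equation} and \eqref{decay_rates_infty_Hm}. This yields a multiplicative constant $e^{C\int_0^\infty(\cdot)}$ depending only on $C_0$, which becomes part of $\bar C$. Terms without decay, such as $m_0\psi$ and $\phi u_0$ at the $L^2$ level, must be rewritten via $\eqref{eq:error}_2$ or paired with $\tilde b/\varepsilon$ to gain a factor of $\varepsilon$. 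Combining all the estimates with suitable weights gives $\frac{d}{dt}\mathcal E+c\D\le w(t)\mathcal E+C\E^{1/2}\D+C\varepsilon^2(1+t)^{-2}$. The bootstrap hypothesis with $\eta$ and $\varepsilon$ small absorbs $\E^{1/2}\D$, and Gronwall gives \eqref{eq:ED} with $\bar C$ fixed first and $\bar C_1$ chosen afterwards.
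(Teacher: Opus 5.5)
There is a genuine gap in how you set up the $\psi$-estimate. Your overall architecture matches the paper: the $\varepsilon^{-1}$-weighted $(\phi,\tilde b)$ estimate, the interaction functional for $\nabla_x\phi$, the $\varepsilon^2$-weighted kinetic estimate, the $H^2_xH^2_v$ bound for $\mi g$, and Gronwall with an integrable weight. The problem is at the $L^2$ level of $\psi$. Testing $\eqref{eq:error}_3$ against $\psi$ (the case $\alpha=0$) produces
$\varepsilon^{-1}\int\tilde b\cdot\psi\,\d x-\int m_0|\psi|^2\,\d x-\int\phi\,u_0\cdot\psi\,\d x$.
\begin{itemize}
\item $\D$ contains no $\Ltwox{\psi}^2$, and there is no Poincar\'e inequality on $\R^2$. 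So the drag term can only be bounded by $\frac12\varepsilon^{-2}\Ltwox{\tilde b}^2+\frac12\Ltwox{\psi}^2$. This gives an $O(1)$, non-integrable Gronwall weight and hence exponential growth.
\item Because $m_0$ vanishes at infinity (vacuum), you also cannot combine this with the $\tilde b$-estimate into a nonnegative quadratic form. This is exactly the obstruction to the earlier approach described in the introduction.
\end{itemize}
The paper does the opposite of what you propose. It tests the total-momentum equation $\eqref{eq:error}_4=\eqref{eq:error}_2+\eqref{eq:error}_3$ against $\partial_x^\alpha\psi$. There $\tilde b/\varepsilon$, $m_0\psi$, $\phi u_0$ and $\phi\psi$ cancel exactly (compare $\tilde R_4$ with $\tilde R_3$). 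The price is the term
$\varepsilon\int\partial_x^\alpha\partial_t\tilde b\cdot\partial_x^\alpha\psi\,\d x=\varepsilon\dt{t}\int\partial_x^\alpha\tilde b\cdot\partial_x^\alpha\psi\,\d x-\varepsilon\int\partial_x^\alpha\tilde b\cdot\partial_x^\alpha\partial_t\psi\,\d x$.
Equation $\eqref{eq:error}_3$ is used only with the multiplier $\varepsilon^2\partial_x^\alpha\partial_t\psi$. That test supplies the dissipation $\varepsilon^2\Hthreex{\partial_t\psi}^2$, which absorbs
$2\varepsilon\int\partial_x^\alpha\tilde b\cdot\partial_x^\alpha\partial_t\psi\,\d x\le\delta\varepsilon^2\Ltwox{\partial_x^\alpha\partial_t\psi}^2+C_\delta\varepsilon^2\cdot\varepsilon^{-2}\Ltwox{\partial_x^\alpha\tilde b}^2$.
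The corrector $2\varepsilon\int\partial_x^\alpha\tilde b\cdot\partial_x^\alpha\psi\,\d x$ goes into the energy functional (Lemma \ref{lem:psi}). The warning against $\eqref{eq:error}_4$ concerns only the $\partial_t\psi$ multiplier. There it would create a $\partial_t\tilde b\cdot\partial_t\psi$ cross term with no dissipation to control it.

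Your treatment of the large linear background terms has the same defect. Bounds like $\|(m_0,u_0)\|_{W^{k,\infty}}\E^{1/2}\D^{1/2}$ or $(1+t)^{-1}\E$ lead, after Young, to the weight $\Linftyx{(m_0,u_0)}^2\sim(1+t)^{-1}$. That weight is not integrable on $(0,\infty)$. Pairing $m_0\psi$ with $\tilde b/\varepsilon$ also gains no factor of $\varepsilon$, since $\int m_0\psi\cdot\tilde b/\varepsilon\,\d x\le\Linftyx{m_0}\Ltwox{\psi}\,\varepsilon^{-1}\Ltwox{\tilde b}$. Even a factor $\varepsilon$ would not make $(1+t)^{-1}$ integrable.

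For every term where the background appears undifferentiated, the paper uses the 2D Ladyzhenskaya inequality:
$\Ltwox{m_0\psi}\le C\Ltwox{m_0}^{1/2}\Ltwox{\nabla_x m_0}^{1/2}\Ltwox{\psi}^{1/2}\Ltwox{\nabla_x\psi}^{1/2}$.
This yields $\delta\Hthreex{\nabla_x(\phi,\psi)}^2+C_\delta\Ltwox{(u_0,m_0)}^2\Ltwox{\nabla_x(u_0,m_0)}^2\Hthreex{(\phi,\psi)}^2$. The weight is integrable because $\nabla_x(m_0,u_0)\in L^2(0,\infty;L^2_x)$, and the $\delta$-term is absorbed by the $\nabla_x\phi$ and $\nabla_x\psi$ dissipation. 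The decay $\Linftyx{\nabla_x(m_0,u_0)}^2\lesssim(1+t)^{-2}$ is used only when a derivative falls on the background.

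A smaller point concerns the $\nabla_x\phi$ interaction estimate at $|\alpha|=3$. There $\Ltwox{\partial_x^\alpha\div_x\tilde b}^2$ involves four derivatives of $\tilde b$, so it is not controlled by $\varepsilon^{-2}\Hthreex{\tilde b}^2$ with a small factor. The paper closes it with the $\Hfourx{b}^2$ dissipation from the kinetic estimate (Lemma \ref{lem:g}). This is one reason that estimate is added with a large weight $K$.
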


 \medskip

The proof of Theorem \ref{thm:priori} consists of Sections \ref{subsec:1} and \ref{subsec:2}.

\subsection{Estimates for error terms}\label{subsec:1}
In this subsection, we establish a series of energy estimates for the macroscopic error quantities. We begin by deriving the estimates for $\phi$ and $\tilde{b}$.
\begin{lemma}\label{lem:phi}
 Letting $(\phi,\tilde{b},\psi)$ be the solution to \eqref{eq:error} on $[0,T^\varepsilon_{max}) $ given by Propositions \ref{prop_limit_decay} and \ref{prop:local_existence}, we have
	\begin{equation}\label{eq:lem phi}
	\begin{aligned}
	&\dt{t}(\Hthreex{\phi}^2+\Hthreex{\tilde{b}}^2)+\frac{3}{2}\frac{\Hthreex{\tilde{b}}^2}{\varepsilon^2}\\
	\le& C(1+t)^{-2}\Hthreex{(\phi,\psi)}^2+C\E(t)^{\frac{1}{2}}\D(t)+C_\delta\HxxxxLv{\mi g}^2+C_\delta\varepsilon^4(1+t)^{-2}\\
	&+\delta\Hthreex{\nabla_x(\phi,\psi)}^2+C_\delta\Ltwox{(u_0,m_0)}^2\Ltwox{\nabla_x(u_0,m_0)}^2\Hthreex{(\phi,\psi)}^2.
	\end{aligned}
	\end{equation}
\end{lemma}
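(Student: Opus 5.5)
We apply $\partial_x^\alpha$ with $|\alpha|\le 3$ to $\eqref{eq:error}_1$ and $\eqref{eq:error}_2$. We test the first with $\partial_x^\alpha\phi$ and the second with $\frac{1}{\varepsilon}\partial_x^\alpha\tilde b$, integrate over $x$, and sum.

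The hyperbolic cross terms cancel after integration by parts:
\[
\frac1\varepsilon\int \partial^\alpha\div_x\tilde b\,\partial^\alpha\phi\,\d x+\frac1\varepsilon\int \nabla_x\partial^\alpha\phi\cdot\partial^\alpha\tilde b\,\d x=0 .
\]
We are left with
\[
\frac12\dt{t}\big(\|\partial^\alpha\phi\|^2_{L^2_x}+\|\partial^\alpha\tilde b\|_{L^2_x}^2\big)+\frac{\|\partial^\alpha\tilde b\|^2_{L^2_x}}{\varepsilon^2}=\frac1\varepsilon\int\partial^\alpha\tilde R_2\cdot\partial^\alpha\tilde b\,\d x .
\]
Multiplying by $2$ gives a dissipation $2\|\tilde b\|_{H^3_x}^2/\varepsilon^2$. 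It therefore suffices to bound the right side by $\frac14\|\tilde b\|^2_{H^3_x}/\varepsilon^2$ plus the terms in \eqref{eq:lem phi}. Wherever possible we use Young's inequality in the form $\frac1\varepsilon|\int F\cdot\partial^\alpha\tilde b|\le \frac{1}{16\varepsilon^2}\|\partial^\alpha\tilde b\|^2+4\|F\|^2$.

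We estimate the pieces of $\tilde R_2$ one at a time.
\begin{itemize}
\item For $-\varepsilon^2\partial_t\bar b$ with $\bar b=m_0u_0-\nabla_xm_0$, Young gives a contribution $C\varepsilon^4\|\partial_t\bar b\|_{H^3_x}^2$. Proposition \ref{prop_limit_decay} with $s\ge6$ (take $i=1$, $k\le 4$) yields $\|\partial_t\nabla_x m_0\|_{H^3}\lesssim(1+t)^{-1}$ and a similar bound for the product term. This produces $C\varepsilon^4(1+t)^{-2}$; the $C_\delta$ is harmless.
\item For the microscopic term $\div_x\int v\otimes v\,\mi g\sqrt{\M}\,\d v$, we have $\|\cdot\|_{H^3_x}\lesssim\HxxxxLv{\mi g}$. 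Young then gives $C\HxxxxLv{\mi g}^2$.
\item The quadratic term $\phi\psi$ is handled by Moser/product estimates in $H^3(\R^2)$: $\|\phi\psi\|_{H^3}\lesssim\|(\phi,\psi)\|_{H^3}\|(\phi,\psi)\|_{H^2}$. We pair it with $\frac1\varepsilon\|\tilde b\|_{H^3}$ and write the result as $\|(\phi,\psi)\|_{H^3}\cdot\frac{\|\tilde b\|_{H^3}}{\varepsilon}\cdot\|(\phi,\psi)\|_{H^3}$. Here we use that $\|(\phi,\psi)\|_{H^3}\le\E^{1/2}$. Splitting off the zero-order parts and using the dissipated gradients bounds this by $C\E^{1/2}\D$. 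If necessary, the zero-order piece is absorbed through an $L^4$ interpolation into the last term.
\end{itemize}

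The main obstacle is the linear background couplings $\phi u_0+m_0\psi$. No smallness is available for these, since the data are large, and $\phi,\psi$ themselves are not dissipated at zero order. We distribute derivatives by Leibniz. When all derivatives fall on $u_0,m_0$ with at least one derivative among them, the $L^\infty$ decay \eqref{decay_rates_infty_Hm} gives $\|\nabla^k(u_0,m_0)\|_{L^\infty}\lesssim(1+t)^{-(1+k)/2}$. Young then produces $C(1+t)^{-2}\|(\phi,\psi)\|_{H^3}^2$ for $k\ge1$.

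The dangerous piece is the undifferentiated one $\int (u_0\partial^\alpha\phi+m_0\partial^\alpha\psi)\cdot\partial^\alpha\tilde b/\varepsilon$, where $u_0$ decays only like $(1+t)^{-1/2}$ in $L^\infty$. For $|\alpha|\ge1$ we write $\partial^\alpha\phi=\partial^{\alpha-e}\nabla\phi$, which is a dissipated quantity. For the zero-order term we use Ladyzhenskaya's inequality
\[
\|u_0\phi\|_{L^2}\le\|u_0\|_{L^4}\|\phi\|_{L^4}\lesssim \|u_0\|^{1/2}\|\nabla u_0\|^{1/2}\|\phi\|^{1/2}\|\nabla\phi\|^{1/2},
\]
so that
\[
\frac1\varepsilon\left|\int u_0\phi\cdot\tilde b\right|\le\frac{\|\tilde b\|^2}{16\varepsilon^2}+C\|u_0\|\,\|\nabla u_0\|\,\|\phi\|\,\|\nabla\phi\| .
\]
A further Young step bounds the last term by $\delta\|\nabla\phi\|^2+C_\delta\|u_0\|^2\|\nabla u_0\|^2\|\phi\|^2$, which is exactly the form of the final term in \eqref{eq:lem phi}. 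It is time-integrable by \eqref{eq:regularity limit equation}, so Gronwall will work later.

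Intermediate Leibniz terms in which derivatives are shared are treated by the same $L^4$ interpolation, or by the $L^\infty$ decay when at least one derivative hits the background. The bound $\|\nabla^k(u_0,m_0)\|_{L^\infty}\le C_0$ together with $\|\partial^\alpha(\phi,\psi)\|\le\|\nabla(\phi,\psi)\|_{H^2}$ for $|\alpha|\ge1$ contributes to the $\delta\|\nabla_x(\phi,\psi)\|_{H^3}^2$ term after Young. Collecting everything, the total absorbed amount is at most $\frac14\|\tilde b\|_{H^3}^2/\varepsilon^2$, which leaves the claimed coefficient $\frac32$ on the left side.
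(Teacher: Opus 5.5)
\textbf{There is a genuine gap: the top-order background coupling for $1\le|\alpha|\le 3$.} Most of your argument matches the paper's proof:
\begin{itemize}
\item the cancellation of the hyperbolic cross terms;
\item the bounds for $\varepsilon^2\partial_t\bar b$ and for the microscopic flux;
\item the $L^\infty$ decay for Leibniz terms in which at least one derivative hits $(u_0,m_0)$;
\item Ladyzhenskaya for the $|\alpha|=0$ coupling.
\end{itemize}
The gap concerns $\varepsilon^{-1}\int (u_0\,\partial_x^\alpha\phi+m_0\,\partial_x^\alpha\psi)\cdot\partial_x^\alpha\tilde b\,\mathrm{d}x$. You bound it by $\|(u_0,m_0)\|_{L^\infty}\le C_0$ times the dissipated quantity $\|\partial_x^\alpha(\phi,\psi)\|_{L^2}\le\|\nabla_x(\phi,\psi)\|_{H^2}$. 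You then assert that Young yields $\delta\|\nabla_x(\phi,\psi)\|_{H^3}^2$. It cannot. Any splitting
\[
\frac{C_0}{\varepsilon}\|\partial_x^\alpha\tilde b\|_{L^2}\|\partial_x^\alpha\phi\|_{L^2}\le \frac{\theta}{\varepsilon^2}\|\partial_x^\alpha\tilde b\|_{L^2}^2+\frac{C_0^2}{4\theta}\|\partial_x^\alpha\phi\|_{L^2}^2
\]
that keeps $\theta\le 1/16$ leaves a coefficient $\ge 4C_0^2$ on $\|\nabla_x\phi\|_{H^2}^2$. Since the data are large, nothing makes that coefficient small.

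This loss is fatal for the closing argument. There the dissipation of $\nabla_x\phi$ is available only with the small weight $\delta_1$: Lemma \ref{lem:nabla phi} is multiplied by $\delta_1$ with $C\delta_1\le\frac14$. An $O(1)$ multiple of $\|\nabla_x\phi\|^2$ therefore cannot be absorbed. Using the decay $\|u_0\|_{L^\infty}\lesssim(1+t)^{-1/2}$ instead does not help either. It produces $C(1+t)^{-1}\|(\phi,\psi)\|_{H^3}^2$, whose weight is not time-integrable, so it is not of the form in the statement and would break the uniform Gr\"onwall step.

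\textbf{The fix} is to use the Ladyzhenskaya interpolation for every $|\alpha|\le 3$, not only at zero order. This is what the paper does:
\[
\frac1\varepsilon\bigl|\int u_0\,\partial_x^\alpha\phi\cdot\partial_x^\alpha\tilde b\,\mathrm{d}x\bigr|
\le \frac{C}{\varepsilon}\|u_0\|^{\frac12}\|\nabla_xu_0\|^{\frac12}\|\partial_x^\alpha\phi\|^{\frac12}\|\nabla_x\partial_x^\alpha\phi\|^{\frac12}\|\partial_x^\alpha\tilde b\|
\le \delta\frac{\|\partial_x^\alpha\tilde b\|^2}{\varepsilon^2}+\delta\|\nabla_x\partial_x^\alpha\phi\|^2+C_\delta\|u_0\|^2\|\nabla_xu_0\|^2\|\partial_x^\alpha\phi\|^2 ,
\]
and likewise for $m_0\,\partial_x^\alpha\psi$. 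The large background now enters only through the time-integrable weight $\|(u_0,m_0)\|_{L^2}^2\|\nabla_x(u_0,m_0)\|_{L^2}^2$. The small coefficient sits on $\|\nabla_x\partial_x^\alpha(\phi,\psi)\|$, which for $|\alpha|=3$ is a fourth derivative. That is precisely why the lemma carries $\delta\|\nabla_x(\phi,\psi)\|_{H^3}^2$ rather than an $H^2$ norm.

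\textbf{A minor point on $\phi\psi$.} The zero-order piece $\varepsilon^{-1}\int\phi\psi\cdot\tilde b$ goes into $C\mathbb{E}(t)^{1/2}\mathbb{D}(t)$ via $\|\phi\|_{L^4}\|\psi\|_{L^4}\lesssim \mathbb{E}^{1/2}\|\nabla_x(\phi,\psi)\|_{L^2}$. It cannot go into the $(u_0,m_0)$-weighted last term, since it contains no background factor.
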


 \begin{proof}
 	Applying $\partial_x^\alpha$ with $ |\alpha|\le 3$  to the first and second equations of \eqref{eq:error}, multiplying the results by $\partial_x^\alpha \phi$ and $\varepsilon^{-1}\partial_x^\alpha \tilde{b}$ respectively, and integrating over $\mathbb{R}^2$, we obtain
 	\begin{equation*}
 	\begin{aligned}
 	&\frac{1}{2}\dt{t}(\Ltwox{\partial_x^\alpha\phi}^2+\Ltwox{\partial_x^\alpha\tilde{b}}^2)+\frac{\Ltwox{\partial_x^\alpha \tilde{b}}^2}{\varepsilon^2}=\intx{\partial_x^\alpha \tilde{R}_2\cdot\frac{\partial_x^\alpha \tilde{b}}{\varepsilon}},
 	\end{aligned}
 	\end{equation*}
 	where the term on the right-hand side can be decomposed as
 	\begin{equation*}
 	\begin{aligned}
 	&\intx{\partial_x^\alpha \tilde{R}_2\cdot\frac{\partial_x^\alpha \tilde{b}}{\varepsilon}}\\
 	=&\intx{\partial_x^\alpha (\phi u_0+m_0\psi)\cdot\frac{\partial_x^\alpha \tilde{b}}{\varepsilon}}+\intx{\partial_x^\alpha(\phi \psi)\cdot\frac{\partial_x^\alpha \tilde{b}}{\varepsilon}}\\
 	&-\intx{\partial_x^\alpha \div_x\intv{v\otimes v \mi g\sqrt{\M}}\cdot\frac{\partial_x^\alpha \tilde{b}}{\varepsilon}}-\intx{\partial_x^\alpha (\varepsilon^2\partial_t\bar{b})\cdot\frac{\partial_x^\alpha \tilde{b}}{\varepsilon}}\\
 	:=&\sum_{i=1}^4 J_{1i}.
 	\end{aligned}
 	\end{equation*}
 	To estimate the first term $J_{11}$, when $|\alpha|=0 $, we apply Cauchy-Schwarz and Young's inequalities to obtain
 	\begin{equation}\label{eq:J11-1}
 	\begin{aligned}
 	J_{11}=&\intx{ (\phi u_0+m_0\psi)\cdot\frac{ \tilde{b}}{\varepsilon}}\\
 	\le&C(\Lfourx{u_0}+\Lfourx{m_0})(\Lfourx{\phi}+\Lfourx{\psi})\frac{\Ltwox{\tilde{b}}}{\varepsilon}\\
 	\le&C\Ltwox{(u_0,m_0)}^{\frac{1}{2}}\Ltwox{\nabla_x(u_0,m_0)}^\frac{1}{2}\Ltwox{(\phi,\psi)}^{\frac{1}{2}}\Ltwox{\nabla_x(\phi,\psi)}^\frac{1}{2}\frac{\Ltwox{\tilde{b}}}{\varepsilon}\\
 	\le&\delta\frac{\Ltwox{\tilde{b}}^2}{\varepsilon^2}+C_\delta\Ltwox{(u_0,m_0)}\Ltwox{\nabla_x(u_0,m_0)}\Ltwox{(\phi,\psi)}\Ltwox{\nabla_x(\phi,\psi)}\\
 	\le&
     \delta\frac{\Ltwox{\tilde{b}}^2}{\varepsilon^2}+\delta\Ltwox{\nabla_x(\phi,\psi)}^2+C_\delta\Ltwox{(u_0,m_0)}^2\Ltwox{\nabla_x(u_0,m_0)}^2\Ltwox{(\phi,\psi)}^2.
 	\end{aligned}
 	\end{equation}
 When $1\le|\alpha|\le3 $, we have		
 	\begin{equation}\label{eq:J11-2}
 	\begin{aligned}
 	J_{11}=&\intx{\partial_x^\alpha (\phi u_0+m_0\psi)\cdot\frac{\partial_x^\alpha \tilde{b}}{\varepsilon}}\\
 	=&\sum_{\beta\le \alpha,|\beta|\neq 0}C_\alpha^\beta\intx{\left( \partial_x^{\alpha-\beta}\phi \partial_x^\beta u_0\cdot\frac{\partial_x^\alpha \tilde{b}}{\varepsilon}+\partial_x^\beta m_0\partial_x^{\alpha-\beta}\psi \cdot\frac{\partial_x^\alpha \tilde{b}}{\varepsilon}\right) }\\
 	&+\intx{\left( \partial_x^{\alpha}\phi\, u_0\cdot\frac{\partial_x^\alpha \tilde{b}}{\varepsilon}+m_0\partial_x^{\alpha}\psi \cdot\frac{\partial_x^\alpha \tilde{b}}{\varepsilon}\right) }\\
 	\le&\delta\frac{\Ltwox{\partial_x^\alpha \tilde{b}}^2}{\varepsilon^2}+C_\delta(1+t)^{-2}\Hthreex{(\phi,\psi)}^2+\intx{\left( \partial_x^{\alpha}\phi\, u_0\cdot\frac{\partial_x^\alpha \tilde{b}}{\varepsilon}+m_0\partial_x^{\alpha}\psi \cdot\frac{\partial_x^\alpha \tilde{b}}{\varepsilon}\right)}.
 	\end{aligned}
 	\end{equation}
 	For the last term in \eqref{eq:J11-2}, we have
 	 	\begin{equation}\label{eq:J11-3}
 	\begin{aligned}
 	&\intx{\left( \partial_x^{\alpha}\phi\, u_0\cdot\frac{\partial_x^\alpha \tilde{b}}{\varepsilon}+m_0\partial_x^{\alpha}\psi \cdot\frac{\partial_x^\alpha \tilde{b}}{\varepsilon}\right) }\\
 		\le&(\Lfourx{u_0}+\Lfourx{m_0})(\Lfourx{\partial_x^\alpha\phi}+\Lfourx{\partial_x^\alpha\psi})\frac{\Ltwox{\partial_x^\alpha\tilde{b}}}{\varepsilon}\\
 	\le&C\Ltwox{(u_0,m_0)}^{\frac{1}{2}}\Ltwox{\nabla_x(u_0,m_0)}^\frac{1}{2}\Ltwox{\partial_x^\alpha(\phi,\psi)}^{\frac{1}{2}}\Ltwox{\nabla_x\partial_x^\alpha(\phi,\psi)}^\frac{1}{2}\frac{\Ltwox{\partial_x^\alpha\tilde{b}}}{\varepsilon}\\
 	\le&\delta\frac{\Hthreex{ \tilde{b}}^2}{\varepsilon^2}+\delta\Hthreex{\nabla_x(\phi,\psi)}^2+C_\delta\Ltwox{(u_0,m_0)}^2\Ltwox{\nabla_x(u_0,m_0)}^2\Hthreex{(\phi,\psi)}^2.
 	\end{aligned}
 	\end{equation}
 	Combining \eqref{eq:J11-1}, \eqref{eq:J11-2} and \eqref{eq:J11-3}, we get
 	 	\begin{equation}\label{eq:J11}
 	\begin{aligned}
 	J_{11}
 	\le&  2\delta\frac{\Hthreex{\tilde{b}}^2}{\varepsilon^2}+\delta\Hthreex{\nabla_x(\phi,\psi)}^2+C_\delta\Ltwox{(u_0,m_0)}^2\Ltwox{\nabla_x(u_0,m_0)}^2\Hthreex{(\phi,\psi)}^2\\
 	&+C_\delta (1+t)^{-2}\Hthreex{(\phi,\psi)}^2.
 	\end{aligned}
 	\end{equation}
 	For $J_{12}$, when $|\alpha|=0$, Sobolev interpolation implies
 	\begin{equation}\label{eq:J12-1}
 	\begin{aligned}
 	J_{12}=&\intx{\phi \psi\cdot\frac{ \tilde{b}}{\varepsilon}}\le C\Lfourx{\phi}\Lfourx{\psi}\frac{\Ltwox{\tilde{b}}}{\varepsilon}\\
 	\le&C\Ltwox{\phi}^{\frac{1}{2}}\Ltwox{\nabla_x\phi}^{\frac{1}{2}}\Ltwox{\psi}^{\frac{1}{2}}\Ltwox{\nabla_x\psi}^{\frac{1}{2}}\frac{\Ltwox{\tilde{b}}}{\varepsilon}\\
 	\le& C\E(t)^{\frac{1}{2}}\D(t).
 	\end{aligned}
 	\end{equation}
 	Similarly, when $1\le |\alpha|\le 3$, it is bounded by
 	\begin{equation}\label{eq:J12-2}
 	\begin{aligned}
 	J_{12}=&\intx{\partial_x^\alpha(\phi \psi)\cdot\frac{\partial_x^\alpha \tilde{b}}{\varepsilon}}\le C\Hthreex{(\phi,\psi)}\Htwox{\nabla_x(\phi,\psi)}\frac{\Hthreex{\tilde{b}}}{\varepsilon}\\
 	\le& C\E(t)^{\frac{1}{2}}\D(t).
 	\end{aligned}
 	\end{equation}	
 	Combining \eqref{eq:J12-1} and \eqref{eq:J12-2} yields
 	\begin{equation}\label{eq:J12}
 	 J_{12}	\le C\E(t)^{\frac{1}{2}}\D(t).
 	\end{equation}
 	For $J_{13}$, we readily deduce
 	\begin{equation}\label{eq:J13}
 	\begin{aligned}
 	J_{13}=&-\intx{\partial_x^\alpha \div_x\intv{v\otimes v \mi g\sqrt{\M}}\cdot\frac{\partial_x^\alpha \tilde{b}}{\varepsilon}}\\
 	\le&\delta\frac{\Ltwox{\partial_x^\alpha \tilde{b}}^2}{\varepsilon^2}+C_\delta\HxxxxLv{\mi g}^2.
 	\end{aligned}
 	\end{equation}	
 	Finally, for $J_{14}$, we have
 	\begin{equation}\label{eq:J14}
 	\begin{aligned}
 	J_{14}=&-\intx{\partial_x^\alpha (\varepsilon^2\partial_t\bar{b})\cdot\frac{\partial_x^\alpha \tilde{b}}{\varepsilon}}	=-\intx{\partial_x^\alpha (\varepsilon^2\partial_t(m_0u_0-\nabla_xm_0))\cdot\frac{\partial_x^\alpha \tilde{b}}{\varepsilon}}\\
 	\le&\delta\frac{\Ltwox{\partial_x^\alpha \tilde{b}}^2}{\varepsilon^2}+C_\delta \varepsilon^4(1+t)^{-2},
 	\end{aligned}
 	\end{equation}	where we have used (\ref{decay_rates_Hm}).

 	By first summing the estimates \eqref{eq:J11}, \eqref{eq:J12}, \eqref{eq:J13} and \eqref{eq:J14}, and choosing $\delta$ sufficiently small, we sum the resulting inequalities over $|\alpha| \le 3$ to arrive at  \eqref{eq:lem phi}. This completes the proof of the lemma.
 \end{proof}

Next, we derive the energy estimates for $\psi$.
\begin{lemma}\label{lem:psi}
	 Letting $(\phi,\tilde{b},\psi)$ be the solution to \eqref{eq:error} on $[0,T^\varepsilon_{max}) $ given by Propositions \ref{prop_limit_decay} and \ref{prop:local_existence}, we have
	\begin{equation}\label{eq:lem psi}
	\begin{aligned}
	&\dt{t}(\Hthreex{ \psi}^2+\varepsilon^2\Hthreex{\nabla_x \psi}^2)+\frac{3}{2}(\Hthreex{\nabla_x \psi}^2+\varepsilon^2\Hthreex{\partial_t\psi}^2) \\
	&\quad +\sum_{ |\alpha|\le 3}2\varepsilon\dt{t}\intx{\partial_x^\alpha \tilde{b}\cdot \partial_x^\alpha\psi}\\
	\le& C_\delta\Hthreex{\tilde{b}}^2+ C_\delta(1+t)^{-\frac{3}{2}}\Hthreex{(\phi,\psi)}^2+C\E(t)^{\frac{1}{2}}\D(t)+C_\delta\varepsilon^4(1+t)^{-\frac{3}{2}}+ C_\delta\varepsilon^2\Hthreex{\nabla_x\psi}^2\\
	&+C_\delta\HxxxLv{\mi g}^2+\delta\Hthreex{\nabla_x\phi}^2+C_\delta\Ltwox{(u_0,m_0)}^2\Ltwox{\nabla_x(u_0,m_0)}^2\Hthreex{(\phi,\psi)}^2.
	\end{aligned}
	\end{equation}
\end{lemma}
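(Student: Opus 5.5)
Apply $\partial_x^\alpha$ with $|\alpha|\le 3$ to $\eqref{eq:error}_3$. Take the $L^2_x$ inner product of the result with two multipliers: $\partial_x^\alpha\psi$, and $\varepsilon^2\partial_t\partial_x^\alpha\psi$. Since $\div_x\psi=0$, the pressure term $\nabla_x(\tilde P+m_0)$ drops out in both cases. The first multiplier gives
\[
\tfrac12\dt{t}\Ltwox{\partial_x^\alpha\psi}^2+\Ltwox{\nabla_x\partial_x^\alpha\psi}^2=\intx{\tfrac{\partial_x^\alpha\tilde b}{\varepsilon}\cdot\partial_x^\alpha\psi}+\intx{\partial_x^\alpha\tilde R_3\cdot\partial_x^\alpha\psi}.
\]
The second gives
\[
\varepsilon^2\Ltwox{\partial_t\partial_x^\alpha\psi}^2+\tfrac{\varepsilon^2}{2}\dt{t}\Ltwox{\nabla_x\partial_x^\alpha\psi}^2=\varepsilon\intx{\partial_x^\alpha\tilde b\cdot\partial_t\partial_x^\alpha\psi}+\varepsilon^2\intx{\partial_x^\alpha\tilde R_3\cdot\partial_t\partial_x^\alpha\psi}.
\]

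The coupling term $\varepsilon^{-1}\intx{\partial_x^\alpha\tilde b\cdot\partial_x^\alpha\psi}$ cannot be bounded directly by $C_\delta\Hthreex{\tilde b}^2$, since it carries a factor $1/\varepsilon$. I will therefore use $\eqref{eq:error}_2$ to write
\[
\frac{\tilde b}{\varepsilon}=\tilde R_2-\nabla_x\phi-\varepsilon\partial_t\tilde b.
\]
Substituting this into the coupling term produces three pieces.
\begin{itemize}
\item The $\nabla_x\phi$ piece vanishes after integration by parts, because $\div_x\psi=0$.
\item The piece $-\varepsilon\intx{\partial_x^\alpha\partial_t\tilde b\cdot\partial_x^\alpha\psi}$ equals $-\varepsilon\dt{t}\intx{\partial_x^\alpha\tilde b\cdot\partial_x^\alpha\psi}+\varepsilon\intx{\partial_x^\alpha\tilde b\cdot\partial_t\partial_x^\alpha\psi}$. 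The total-derivative part is moved to the left-hand side; this is the origin of the cross term $2\varepsilon\dt{t}\int\partial_x^\alpha\tilde b\cdot\partial_x^\alpha\psi$ in \eqref{eq:lem psi}. The remainder $\varepsilon\int\partial_x^\alpha\tilde b\cdot\partial_t\partial_x^\alpha\psi$ is the same as the term produced by the second multiplier.
\item The $\tilde R_2$ piece is treated together with the other remainders below.
\end{itemize}
After multiplying the whole identity by $2$, the two copies of $\varepsilon\int\partial_x^\alpha\tilde b\cdot\partial_t\partial_x^\alpha\psi$ are absorbed by Cauchy--Schwarz:
\[
2\varepsilon\intx{\partial_x^\alpha\tilde b\cdot\partial_t\partial_x^\alpha\psi}\le\tfrac{\varepsilon^2}{4}\Ltwox{\partial_t\partial_x^\alpha\psi}^2+C\Ltwox{\partial_x^\alpha\tilde b}^2.
\]
This produces the $C_\delta\Hthreex{\tilde b}^2$ term. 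Keeping the coefficient of $\varepsilon^2\Hthreex{\partial_t\psi}^2$ above $3/2$ on the left is the bookkeeping that matters here.

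Next come the remainders. For $\tilde R_3$ tested against $\partial_x^\alpha\psi$, the linear terms $u_0\cdot\nabla\psi$, $\psi\cdot\nabla u_0$, $m_0\psi$ and $\phi u_0$ are handled exactly as $J_{11}$ in Lemma \ref{lem:phi}. I will use Leibniz's rule, placing derivatives on $(m_0,u_0)$ and using the $L^\infty$ decay \eqref{decay_rates_infty_Hm}. For the top-order terms I will use Ladyzhenskaya's inequality $\Lfourx{f}^2\lesssim\Ltwox{f}\Ltwox{\nabla f}$, together with integration by parts for $u_0\cdot\nabla\partial^\alpha\psi$. This yields terms of the forms $(1+t)^{-3/2}\Hthreex{(\phi,\psi)}^2$, $\delta\Hthreex{\nabla_x(\phi,\psi)}^2$ and $C_\delta\Ltwox{(u_0,m_0)}^2\Ltwox{\nabla_x(u_0,m_0)}^2\Hthreex{(\phi,\psi)}^2$. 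The quadratic terms $\psi\cdot\nabla\psi$ and $\phi\psi$ are bounded by $C\E^{1/2}\D$.

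The $\tilde R_2$ piece tested against $\partial_x^\alpha\psi$ is handled similarly. Its $\mi g$ term is integrated by parts once to give $C_\delta\HxxxLv{\mi g}^2+\delta\Hthreex{\nabla_x\psi}^2$. The term $\varepsilon^2\partial_t\bar b$ is bounded by $C_\delta\varepsilon^4(1+t)^{-3/2}$ plus a small multiple of $\psi$ terms, using \eqref{decay_rates_Hm}.

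Finally, $\varepsilon^2\int\partial_x^\alpha\tilde R_3\cdot\partial_t\partial_x^\alpha\psi$ is bounded by $\tfrac{\varepsilon^2}{4}\Ltwox{\partial_t\partial_x^\alpha\psi}^2+C\varepsilon^2\Ltwox{\partial_x^\alpha\tilde R_3}^2$. The linear part of $\varepsilon^2\Ltwox{\partial_x^\alpha\tilde R_3}^2$ gives $C_\delta\varepsilon^2\Hthreex{\nabla_x\psi}^2$ together with decaying terms in $(\phi,\psi)$. Its nonlinear part is bounded using $\varepsilon^2\Hthreex{\nabla\psi}^2\le\E$, hence by $\E^{1/2}\D$.

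I expect the main obstacle to be the $1/\varepsilon$ coupling term. The substitution via $\eqref{eq:error}_2$ is the key idea, and I need to check that the time derivative falling on $\tilde b$ is exactly compensated by the $\varepsilon^2\partial_t\psi$ multiplier, without generating uncontrolled terms such as $\int\varepsilon\partial_t\tilde b\cdot\partial_t\psi$. Once that is done, summing over $|\alpha|\le3$ and choosing $\delta$ small gives \eqref{eq:lem psi}.
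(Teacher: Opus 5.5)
Your skeleton matches the paper's. Substituting $\tilde b/\varepsilon=\tilde R_2-\nabla_x\phi-\varepsilon\partial_t\tilde b$ into $\eqref{eq:error}_3$ is the same as testing $\eqref{eq:error}_4$ against $\partial_x^\alpha\psi$. Your handling of the cross term $\varepsilon\intx{\partial_x^\alpha\tilde b\cdot\partial_t\partial_x^\alpha\psi}$ and of the $\varepsilon^2\partial_t\partial_x^\alpha\psi$ test matches $J_{21}$ and $J_{22}$.

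The gap is in the remainder paired with $\partial_x^\alpha\psi$. You propose to estimate the terms $-m_0\psi-\phi u_0-\phi\psi$ of $\tilde R_3$, and the corresponding terms of $\tilde R_2$, separately and ``exactly as $J_{11}$''. That device works in $J_{11}$ only because one factor, $\Ltwox{\partial_x^\alpha\tilde b}/\varepsilon$, belongs to $\D$. For $1\le|\alpha|\le 3$ you are still fine, since $\Ltwox{\partial_x^\alpha\psi}$ is part of $\Hthreex{\nabla_x\psi}$. For $|\alpha|=0$, however, the test function $\psi$ carries no dissipation. Consider $\intx{m_0|\psi|^2}$ and $\intx{\phi u_0\cdot\psi}$, which have no derivative that could be moved. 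With H\"older and Gagliardo--Nirenberg, the best you get is $\delta\Ltwox{\nabla_x(\phi,\psi)}^2+C_\delta\|(m_0,u_0)\|_{L^p_x}^{p'}\Ltwox{(\phi,\psi)}^2$. Given the rates of Proposition \ref{prop_limit_decay}, this coefficient decays at best like $(1+t)^{-1/2}$. For large data it is neither small nor time-integrable, so it is not of the form on the right of \eqref{eq:lem psi}, and it would break the Gr\"onwall step behind \eqref{eq:ED}. Likewise $\intx{\phi|\psi|^2}$ is only $\lesssim \E(t)\D(t)^{1/2}$, not $\E(t)^{1/2}\D(t)$. You cannot use the sign of $-\intx{m_0|\psi|^2}$ either: $m_0$ need not be nonnegative, and the $\tilde R_2$ copy has the opposite sign.

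The missing observation is that these terms are never estimated at all. The combination $\phi u_0+m_0\psi+\phi\psi$ enters $\tilde R_2$ and $\tilde R_3$ with opposite signs, so $\tilde R_2+\tilde R_3=\tilde R_4$ exactly. After your substitution, what is tested against $\partial_x^\alpha\psi$ is therefore $\tilde R_4$, which contains only:
\begin{itemize}
\item $u_0\cdot\nabla_x\psi$, whose top-order part vanishes because $\div_x u_0=0$;
\item $\psi\cdot\nabla_x u_0$, whose top-order part is handled by integrating by parts off $u_0$, giving $\Ltwox{u_0}^2\Ltwox{\nabla_xu_0}^2\Hthreex{\psi}^2$;
\item $\psi\cdot\nabla_x\psi$, the $\mi g$ flux, and $\varepsilon^2\partial_t\bar b$.
\end{itemize}
All of these admit bounds of the admissible form. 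The drag terms then survive only in $\varepsilon^2\intx{\partial_x^\alpha\tilde R_3\cdot\partial_t\partial_x^\alpha\psi}$, where $\varepsilon^2\Ltwox{\partial_t\partial_x^\alpha\psi}^2$ supplies the dissipative factor, as you correctly do. This exact cancellation is why the paper introduces $\eqref{eq:error}_4$ (the vacuum issue discussed in the introduction). Once you use it, your argument coincides with the paper's proof.
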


\begin{proof}	
	Applying $\partial_x^\alpha$ with $ |\alpha|\le 3$ to $\eqref{eq:error}_3$ and $\eqref{eq:error}_4$, multiplying the resulting equations by $\varepsilon^2\partial_x^\alpha \partial_t\psi$ and $\partial_x^\alpha \psi$ respectively, and integrating over $\mathbb{R}^2$, we obtain
	\begin{equation*}
	\begin{aligned}
	&\frac{1}{2}\dt{t}(\Ltwox{\partial_x^\alpha \psi}^2+\varepsilon^2\Ltwox{\nabla_x\partial_x^\alpha \psi}^2)+\Ltwox{\nabla_x\partial_x^\alpha \psi}^2+\varepsilon^2\Ltwox{\partial_x^\alpha\partial_t\psi}^2\\
	=&\varepsilon\intx{\left(\partial_x^\alpha\partial_t \psi\cdot \partial_x^\alpha\tilde{b} -\partial_x^\alpha\partial_t \tilde{b}\cdot \partial_x^\alpha\psi\right) }\\
	&+\intx{\varepsilon^2\partial_x^\alpha\tilde{R}_3\cdot \partial_x^\alpha\partial_t\psi}+\intx{\partial_x^\alpha\tilde{R}_4\cdot \partial_x^\alpha\psi}\\
	:=&\sum_{i=1}^3 J_{2i}.
	\end{aligned}
	\end{equation*}
	For the first term $J_{21}$, we rewrite it as a pure time derivative plus a remaining term, which yields
	\begin{equation}\label{eq:J21}
	\begin{aligned}
	J_{21}=&-\varepsilon\intx{\partial_x^\alpha\partial_t \tilde{b}\cdot \partial_x^\alpha\psi}+\varepsilon\intx{\partial_x^\alpha\partial_t \psi\cdot \partial_x^\alpha\tilde{b}}\\
	=&-\varepsilon\dt{t}\intx{\partial_x^\alpha \tilde{b}\cdot \partial_x^\alpha\psi} + 2\varepsilon\intx{\partial_x^\alpha\partial_t \psi\cdot \partial_x^\alpha\tilde{b}}\\
	\le&-\varepsilon\dt{t}\intx{\partial_x^\alpha \tilde{b}\cdot \partial_x^\alpha\psi}+\delta\varepsilon^2\Ltwox{\partial_x^\alpha\partial_t\psi}^2+C_\delta \Ltwox{\partial_x^\alpha \tilde{b}}^2.
	\end{aligned}
	\end{equation}
	
	For $J_{22}$, using the definition of $\tilde{R}_3$ in \eqref{eq:remainders}, similar to the proof of  Lemma \ref{lem:phi}, we deduce
	\begin{equation}\label{eq:J22}
	\begin{aligned}
	J_{22}=&\intx{\varepsilon^2\partial_x^\alpha\tilde{R}_3\cdot \partial_x^\alpha\partial_t\psi}\\
	=&\intx{\varepsilon^2\partial_x^\alpha (-u_0\cdot \nabla_x \psi - \psi\cdot \nabla_x u_0 - \psi\cdot \nabla_x \psi - m_0 \psi - \phi u_0 - \phi \psi)  \cdot \partial_x^\alpha\partial_t\psi}\\
	\le&\delta\varepsilon^2\Ltwox{\partial_x^\alpha\partial_t \psi}^2+ C_\delta\varepsilon^2\Hthreex{\nabla_x\psi}^2+C_\delta(1+t)^{-2}\Hthreex{(\phi,\psi)}^2+C\E(t)^{\frac{1}{2}}\D(t)\\
	&+	C_\delta\Ltwox{(u_0,m_0)}^2\Ltwox{\nabla_x(u_0,m_0)}^2\Hthreex{(\phi,\psi)}^2+\delta\Hthreex{\nabla_x(\phi,\psi)}^2.
	\end{aligned}
	\end{equation}

	For $J_{23}$, substituting the definition of $\tilde{R}_4$ in \eqref{eq:remainders}, we have
	\begin{equation}\label{eq:J23-1}
	\begin{aligned}
	J_{23}=&\intx{\partial_x^\alpha\tilde{R}_4\cdot \partial_x^\alpha\psi}\\
	=&\intx{\partial_x^\alpha\left( -u_0\cdot \nabla_x \psi - \psi\cdot \nabla_x u_0  \right) \cdot \partial_x^\alpha\psi}+\intx{\partial_x^\alpha\left( - \psi\cdot \nabla_x \psi \right) \cdot \partial_x^\alpha\psi}\\
	&+\intx{\partial_x^\alpha\left(- \div_x \left( \intv{v\otimes v \mi g\sqrt{\M}} \right) - \varepsilon^2 \partial_t \bar{b} \right) \cdot \partial_x^\alpha\psi}\\
	\le&C\E(t)^{\frac{1}{2}}\D(t)+\delta\Hthreex{\nabla_x\psi}^2+C_\delta\HxxxLv{\mi g}^2\\
	&+\intx{\partial_x^\alpha\left( -u_0\cdot \nabla_x \psi - \psi\cdot \nabla_x u_0  \right) \cdot \partial_x^\alpha\psi}- \varepsilon^2 \intx{\partial_x^\alpha\partial_t (m_0u_0-\nabla_xm_0) \cdot \partial_x^\alpha\psi}.
	\end{aligned}
	\end{equation}
	For the fourth term on the right-hand side of \eqref{eq:J23-1}, similarly to $J_{11} $, we have
	\begin{equation}\label{eq:J23-1-1}
	\begin{aligned}
	&\intx{\partial_x^\alpha\left( -u_0\cdot \nabla_x \psi - \psi\cdot \nabla_x u_0  \right) \cdot \partial_x^\alpha\psi}\\
	\le&\delta\Hthreex{\nabla_x\psi}^2+C_\delta (1+t)^{-2}\Hthreex{\psi}^2+C_\delta\Ltwox{u_0}^2\Ltwox{\nabla_xu_0}^2\Hthreex{\psi}^2,
	\end{aligned}
	\end{equation}
	where we  have used
	\begin{equation}
    \begin{aligned}
    \intx{(\partial_x^\alpha \psi \cdot \nabla_x u_0) \cdot \partial_x^\alpha \psi}
    &\le C \intx{\left|\partial_x^\alpha \psi\right| \left|u_0\right| \left|\nabla \partial_x^\alpha \psi\right|} \\
    &\le \frac{1}{2}\delta \Ltwox{\nabla \partial_x^\alpha \psi}^2 + C_\delta \Ltwox{u_0} \Ltwox{\nabla u_0} \Ltwox{\partial_x^\alpha \psi} \Ltwox{\nabla \partial_x^\alpha \psi} \\
    &\le \delta \Ltwox{\nabla \partial_x^\alpha \psi}^2 + C_\delta \Ltwox{u_0}^2 \Ltwox{\nabla u_0}^2 \Ltwox{\partial_x^\alpha \psi}^2.
    \end{aligned}
	\end{equation}
	To handle the last term in \eqref{eq:J23-1}, we consider two cases. When $|\alpha|=0$, direct estimation gives
	\begin{equation}\label{eq:J23-2}
	\begin{aligned}
	&- \varepsilon^2 \intx{\partial_t (m_0u_0-\nabla_xm_0) \cdot \psi}\\
	\le&C\varepsilon^2(\Ltwox{\partial_tm_{0}}+\Ltwox{\partial_tu_{0}})(\Linftyx{m_{0}}+\Linftyx{u_{0}})\Ltwox{\psi}+\varepsilon^2\Ltwox{\partial_tm_{0}}\Ltwox{\nabla_x\psi}\\
	\le&C_\delta\varepsilon^4(1+t)^{-\frac{3}{2}}+C_\delta(1+t)^{-\frac{3}{2}}\Ltwox{\psi}^2+\delta\Ltwox{\nabla_x\psi}^2.
	\end{aligned}
	\end{equation}
	When $1\le |\alpha|\le 3$, integrating by parts with respect to one spatial derivative $\partial_x^{\alpha'}$ (where $|\alpha'|=1$), we find
	\begin{equation}\label{eq:J23-3}
	\begin{aligned}
	&- \varepsilon^2 \intx{\partial_x^\alpha\partial_t (m_0u_0-\nabla_xm_0) \cdot \partial_x^\alpha\psi}\\
	=& \varepsilon^2 \intx{\partial_x^{\alpha-\alpha'}\partial_t (m_0u_0-\nabla_xm_0) \cdot \partial_x^{\alpha+\alpha'}\psi}\\
	\le&\delta\Hthreex{\nabla_x\psi}^2+C_\delta\varepsilon^4(1+t)^{-2}.
	\end{aligned}
	\end{equation}
	Combining \eqref{eq:J23-1}, \eqref{eq:J23-1-1}, \eqref{eq:J23-2} and \eqref{eq:J23-3}, we obtain the bound for $J_{23}$:
	\begin{equation}\label{eq:J23}
	\begin{aligned}
	J_{23}\le& C_\delta(1+t)^{-\frac{3}{2}}\Hthreex{\psi}^2+C\E(t)^{\frac{1}{2}}\D(t)+3\delta\Hthreex{\nabla_x\psi}^2+C_\delta\HxxxLv{\mi g}^2\\
	&+C_\delta\varepsilon^4(1+t)^{-\frac{3}{2}}+C_\delta\Ltwox{u_0}^2\Ltwox{\nabla_xu_0}^2 \|\psi\|_{H^3_x}^2.
	\end{aligned}
	\end{equation}
By summing \eqref{eq:J21}, \eqref{eq:J22}, \eqref{eq:J23} and choosing $\delta$ sufficiently small to absorb these terms into the left-hand side, we sum the resulting inequalities over $|\alpha| \le 3$, which yields \eqref{eq:lem psi}. We complete the proof of the lemma.
\end{proof}

  Next, we recover the dissipation estimate for $\Hthreex{\nabla_x\phi}$ to close the macroscopic energy bounds.
 \begin{lemma}\label{lem:nabla phi} Letting $(\phi,\tilde{b},\psi)$ be the solution to \eqref{eq:error} on $[0,T^\varepsilon_{max}) $ given by Propositions \ref{prop_limit_decay} and \ref{prop:local_existence}, we have
 		\begin{equation}\label{eq:nabla phi}
 	\begin{aligned}
 	&\sum_{ |\alpha|\le 3}\dt{t}\int 2\varepsilon \partial_x^\alpha \tilde{b}\cdot \nabla_x \partial_x^\alpha \phi \, \d x +\Hthreex{\nabla_x \phi}^2\\
 	\le& C(1+t)^{-2}\Hthreex{(\phi,\psi)}^2+C\E(t)^{\frac{1}{2}}\D(t)+C\varepsilon^2(1+t)^{-2}+C\frac{\Hthreex{\tilde{b}}^2}{\varepsilon^2}+C\HxxxxLv{\mi g}^2\\
 	&+C\Ltwox{\nabla_x^4 b}^2+C\Hthreex{\nabla_x\psi}^2+C\Ltwox{(m_0,u_0)}^2\Ltwox{\nabla_x(m_0,u_0)}^2\Hthreex{(\phi,\psi)}^2.
 	\end{aligned}
 	\end{equation}
 \end{lemma}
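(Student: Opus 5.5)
The plan is the standard "hypocoercive" cross-term argument. Equation $\eqref{eq:error}_2$ contains $\nabla_x\phi$ explicitly, so we test it against $\nabla_x\phi$ itself. Apply $\partial_x^\alpha$ with $|\alpha|\le 3$ to $\eqref{eq:error}_2$, take the inner product with $\nabla_x\partial_x^\alpha\phi$, and integrate over $\mathbb{R}^2$. This gives
\begin{equation*}
\Ltwox{\nabla_x\partial_x^\alpha\phi}^2+\varepsilon\intx{\partial_t\partial_x^\alpha\tilde b\cdot\nabla_x\partial_x^\alpha\phi}
=\intx{\partial_x^\alpha\tilde R_2\cdot\nabla_x\partial_x^\alpha\phi}-\frac1\varepsilon\intx{\partial_x^\alpha\tilde b\cdot\nabla_x\partial_x^\alpha\phi}.
\end{equation*}

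For the time-derivative term we move $\partial_t$ outside:
\begin{equation*}
\varepsilon\intx{\partial_t\partial_x^\alpha\tilde b\cdot\nabla_x\partial_x^\alpha\phi}=\varepsilon\dt{t}\intx{\partial_x^\alpha\tilde b\cdot\nabla_x\partial_x^\alpha\phi}+\varepsilon\intx{\div_x\partial_x^\alpha\tilde b\,\partial_t\partial_x^\alpha\phi}.
\end{equation*}
By $\eqref{eq:error}_1$ the last integral equals $-\Ltwox{\div_x\partial_x^\alpha\tilde b}^2$. So it only contributes $\Ltwox{\nabla_x\partial_x^\alpha\tilde b}^2$ to the right-hand side; there is no $1/\varepsilon$ loss, since the $\varepsilon$ prefactor cancels the $1/\varepsilon$ in $\eqref{eq:error}_1$. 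Multiplying everything by $2$ produces the stated cross term $2\varepsilon\,\dt{t}\intx{\partial_x^\alpha\tilde b\cdot\nabla_x\partial_x^\alpha\phi}$.

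For $|\alpha|\le 2$, the term $\Ltwox{\nabla_x\partial_x^\alpha\tilde b}^2$ is bounded by $\Hthreex{\tilde b}^2\le\Hthreex{\tilde b}^2/\varepsilon^2$. For $|\alpha|=3$ it involves four derivatives of $\tilde b=b-\varepsilon\bar b$, so we write
\begin{equation*}
\Ltwox{\nabla_x^4\tilde b}^2\le 2\Ltwox{\nabla_x^4 b}^2+2\varepsilon^2\Ltwox{\nabla_x^4\bar b}^2 .
\end{equation*}
The $\varepsilon^2\Ltwox{\nabla_x^4\bar b}^2$ piece is $\lesssim\varepsilon^2(1+t)^{-2}$ by \eqref{decay_rates_Hm}. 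This uses $\bar b=m_0u_0-\nabla_xm_0$, which involves at most five derivatives of $m_0$ (fine since $s\ge6$), together with the $L^\infty_x$ bounds \eqref{decay_rates_infty_Hm} for the product. This is exactly why $\Ltwox{\nabla_x^4 b}^2$ and $C\varepsilon^2(1+t)^{-2}$ appear in \eqref{eq:nabla phi}.

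The term $-\frac1\varepsilon\intx{\partial_x^\alpha\tilde b\cdot\nabla_x\partial_x^\alpha\phi}$ is handled by Young's inequality: $\frac14\Ltwox{\nabla_x\partial_x^\alpha\phi}^2+C\Ltwox{\partial_x^\alpha\tilde b}^2/\varepsilon^2$.

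The remainder $\intx{\partial_x^\alpha\tilde R_2\cdot\nabla_x\partial_x^\alpha\phi}$ is split as in Lemma \ref{lem:phi}, but now $\nabla_x\partial_x^\alpha\phi$ takes the place of $\partial_x^\alpha\tilde b/\varepsilon$.
\begin{itemize}
\item The linear part $\phi u_0+m_0\psi$ is treated exactly as $J_{11}$. Lower-order derivatives falling on $(u_0,m_0)$ give $(1+t)^{-2}\Hthreex{(\phi,\psi)}^2$. The top-order term uses the $L^4$ Ladyzhenskaya interpolation, which gives $\frac14\Hthreex{\nabla_x\phi}^2+C\Hthreex{\nabla_x\psi}^2+C\Ltwox{(m_0,u_0)}^2\Ltwox{\nabla_x(m_0,u_0)}^2\Hthreex{(\phi,\psi)}^2$.
\item The product $\phi\psi$ gives $C\E^{1/2}\D$, as in $J_{12}$.
\item The microscopic flux $\div_x\intv{v\otimes v\mi g\sqrt\M}$ gives $\frac18\Hthreex{\nabla_x\phi}^2+C\HxxxxLv{\mi g}^2$.
\item The term $\varepsilon^2\partial_t\bar b$ gives $C\varepsilon^4(1+t)^{-2}\le C\varepsilon^2(1+t)^{-2}$.
\end{itemize}
Absorbing all the $\Hthreex{\nabla_x\phi}^2$ pieces into the left-hand side and summing over $|\alpha|\le3$ yields \eqref{eq:nabla phi}.

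The main obstacle is the $|\alpha|=3$ case of the time-derivative exchange. It requires four derivatives of $\tilde b$, which the energy $\E$ does not control. The resolution is to recognize that the dissipation $\D$ contains $\Hfourx{b}^2$, and to separate the background correction $\varepsilon\bar b$, whose decay is supplied by Proposition \ref{prop_limit_decay}. A secondary care point is that no term of the form $\Hthreex{\nabla_x\psi}^2$ may come with a small coefficient we cannot afford. This is acceptable here because the statement allows a constant $C$ in front, which is later compensated by Lemma \ref{lem:psi} with suitable weights.
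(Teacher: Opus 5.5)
Your proposal is correct and follows the paper's proof essentially step for step. You test $\partial_x^\alpha$ of $\eqref{eq:error}_2$ against $\nabla_x\partial_x^\alpha\phi$, use $\eqref{eq:error}_1$ to trade the time derivative for $\Ltwox{\partial_x^\alpha\div_x\tilde b}^2$ (split at top order via $\tilde b=b-\varepsilon\bar b$), apply Young's inequality to the $\tilde b/\varepsilon$ term, and estimate $\tilde R_2$ as in Lemma \ref{lem:phi}. One cosmetic point: take each Young absorption against $\Ltwox{\nabla_x\partial_x^\alpha\phi}^2$ for the fixed $\alpha$, or with a constant small enough to survive summing over all $|\alpha|\le 3$ (as the paper's $\tfrac{1}{40}$ does), rather than against the full $\Hthreex{\nabla_x\phi}^2$ at $\tfrac14$ per index.
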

 \begin{proof}	
 	Applying $\partial_x^\alpha$ with $ |\alpha|\le 3$ to $\eqref{eq:error}_2$, multiplying the resulting equation by $\nabla_x \partial_x^\alpha \phi$, and integrating over $\mathbb{R}^2$, we obtain
 	\begin{equation*}
 	\begin{aligned}
 	&\dt{t}\int \varepsilon \partial_x^\alpha \tilde{b}\cdot \nabla_x \partial_x^\alpha \phi \, \d x + \|\nabla_x \partial_x^\alpha \phi\|^2_{L^2_x}\\
 	=&\intx{\varepsilon \partial_x^\alpha \tilde{b}\cdot \nabla_x \partial_x^\alpha\partial_t \phi} -\int \frac{\partial_x^\alpha \widetilde{b}}{\varepsilon} \cdot \nabla_x \partial_x^\alpha \phi \, \d x + \int \partial_x^\alpha \tilde{R}_2 \cdot \nabla_x \partial_x^\alpha \phi \, \d x\\
 	:=&\sum_{i=1}^3 J_{3i}.
 	\end{aligned}
 	\end{equation*}	
 	For $J_{31}$, using the continuity equation $\eqref{eq:error}_1$ (which implies $\varepsilon \partial_t \phi = -\div_x \tilde{b}$) and integrating by parts, we have
 	\begin{equation}\label{eq:J31}
 	\begin{aligned}
 	J_{31}=&\intx{\varepsilon \partial_x^\alpha \tilde{b}\cdot \nabla_x \partial_x^\alpha\partial_t \phi}=-\intx{\partial_x^\alpha \tilde{b}\cdot \nabla_x \partial_x^\alpha \div_x \tilde{b}}\\	=&\Ltwox{\partial_x^\alpha \div_x \tilde{b}}^2\le C\Hfourx{\tilde{b}}^2\\
 	\le&C\Hthreex{\tilde{b}}^2+C\Ltwox{\nabla_x^4b}^2+C\varepsilon^2(1+t)^{-2},
 	\end{aligned}
 	\end{equation}
 	where $\tilde{b}=b-\varepsilon(m_0u_0-\nabla_xm_0) $ is used.
 	
 	For $J_{32}$, an application of Cauchy-Schwarz and Young's inequalities yields
 	\begin{equation}\label{eq:J32}
 	\begin{aligned}
 	J_{32}=&-\int \frac{\partial_x^\alpha \widetilde{b}}{\varepsilon} \cdot \nabla_x \partial_x^\alpha \phi \, \d x \le \frac{1}{40}\|\nabla_x \partial_x^\alpha \phi\|^2_{L^2_x}+C\frac{\Hthreex{\tilde{b}}^2}{\varepsilon^2}.
 	\end{aligned}
 	\end{equation}
 	For $J_{33}$, similar to estimates  of $J_{22}$ in \eqref{eq:J22}, substituting the definition of $\tilde{R}_2$, we deduce
 	\begin{equation}\label{eq:J33}
 	\begin{aligned}
 	J_{33}=& \int \partial_x^\alpha \tilde{R}_2 \cdot \nabla_x \partial_x^\alpha \phi \, \d x\\
 	=& \int \partial_x^\alpha\left(\phi u_0+m_0\psi \right) \cdot \nabla_x \partial_x^\alpha \phi \, \d x	+ \int \partial_x^\alpha\left(\phi\psi \right)  \cdot \nabla_x \partial_x^\alpha \phi \, \d x\\
 	&- \int \partial_x^\alpha \div_x\intv{v\otimes v\mi g \sqrt{\M}} \cdot \nabla_x \partial_x^\alpha \phi \, \d x- \varepsilon^2\int \partial_x^\alpha \partial_t\bar{b}\cdot \nabla_x \partial_x^\alpha \phi \, \d x	\\
 	\le& \frac{1}{40}\Hthreex{\nabla_x (\phi,\psi)}^2+C(1+t)^{-2}\Hthreex{(\phi,\psi)}^2+C\E(t)^{\frac{1}{2}}\D(t)+C\varepsilon^4(1+t)^{-2}\\
 	&+C\HxxxxLv{\mi g}^2+C\Ltwox{(m_0,u_0)}^2\Ltwox{\nabla_x(m_0,u_0)}^2\Hthreex{(\phi,\psi)}^2.	
 	\end{aligned}
 	\end{equation}
 	Combining \eqref{eq:J31}, \eqref{eq:J32}, \eqref{eq:J33}   and summing over $|\alpha| \le 3$, we arrive at \eqref{eq:nabla phi}.
 \end{proof}

 By combining the results of Lemmas \ref{lem:phi}, \ref{lem:psi}, \ref{lem:nabla phi}, we obtain the total energy estimates for the macroscopic error system.
\begin{corollary}\label{lem:error}
 Letting $(\phi,\tilde{b},\psi)$ be the solution to \eqref{eq:error} on $[0,T^\varepsilon_{max}) $ given by Propositions \ref{prop_limit_decay} and \ref{prop:local_existence}, we have
	\begin{equation}\label{eq:total error}
	\begin{aligned}
	&\dt{t}\left( \Hthreex{(\phi,\psi,\tilde{b})}^2+\varepsilon^2\Hthreex{\nabla_x \psi}^2+\sum_{ |\alpha|\le 3}2\varepsilon\intx{\left( \partial_x^\alpha \tilde{b}\cdot \partial_x^\alpha\psi+\delta_1\partial_x^\alpha \tilde{b}\cdot \nabla_x \partial_x^\alpha \phi\right) } \right)\\
	&\quad +\frac{\Hthreex{\tilde{b}}^2}{\varepsilon^2}+\Hthreex{\nabla_x \psi}^2+\varepsilon^2\Hthreex{\partial_t\psi}^2+\frac{\delta_1}{2}\Hthreex{\nabla_x\phi}^2\\
	\le& C(1+t)^{-\frac{3}{2}}\Hthreex{(\phi,\psi)}^2+C\E(t)^{\frac{1}{2}}\D(t)+C\varepsilon^2(1+t)^{-\frac{3}{2}}+C\HxxxxLv{\mi g}^2\\
	&+C\Ltwox{\nabla_x^4 b}^2+C\Ltwox{(u_0,m_0)}^2\Ltwox{\nabla_x(u_0,m_0)}^2\Hthreex{(\phi,\psi)}^2.
	\end{aligned}
	\end{equation}
\end{corollary}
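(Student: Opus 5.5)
The corollary follows by taking a linear combination of the three preceding lemmas: Lemma \ref{lem:phi} with weight $1$, Lemma \ref{lem:psi} with weight $1$, and Lemma \ref{lem:nabla phi} with a small weight $\delta_1>0$. The choice of constants is made in the order $\delta_1$ first, then the internal $\delta$ of Lemmas \ref{lem:phi} and \ref{lem:psi}. The time-derivative parts combine directly. Lemma \ref{lem:phi} contributes $\frac{d}{dt}\Hthreex{(\phi,\tilde b)}^2$, Lemma \ref{lem:psi} contributes $\frac{d}{dt}(\Hthreex{\psi}^2+\varepsilon^2\Hthreex{\nabla_x\psi}^2+\sum 2\varepsilon\int\partial_x^\alpha\tilde b\cdot\partial_x^\alpha\psi)$, and $\delta_1\times$Lemma \ref{lem:nabla phi} contributes $\frac{d}{dt}\sum 2\varepsilon\delta_1\int\partial_x^\alpha\tilde b\cdot\nabla_x\partial_x^\alpha\phi$. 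This is exactly the bracket in \eqref{eq:total error}.

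\textbf{Dissipative terms on the left.} The sum produces $\frac32\Hthreex{\tilde b}^2/\varepsilon^2$, $\frac32(\Hthreex{\nabla_x\psi}^2+\varepsilon^2\Hthreex{\partial_t\psi}^2)$ and $\delta_1\Hthreex{\nabla_x\phi}^2$.

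\textbf{Bad terms on the right and how they are absorbed.}
\begin{itemize}
\item From Lemma \ref{lem:nabla phi} (multiplied by $\delta_1$): $C\delta_1\Hthreex{\tilde b}^2/\varepsilon^2$ and $C\delta_1\Hthreex{\nabla_x\psi}^2$. Choosing $\delta_1$ small makes these at most $\frac14$ of the corresponding left-hand dissipation.
\item From Lemmas \ref{lem:phi} and \ref{lem:psi}: $\delta\Hthreex{\nabla_x(\phi,\psi)}^2$. Choosing $\delta\le\delta_1/8$ after $\delta_1$ is fixed absorbs these into $\delta_1\Hthreex{\nabla_x\phi}^2$ and $\Hthreex{\nabla_x\psi}^2$.
\item From Lemma \ref{lem:psi}: $C_\delta\Hthreex{\tilde b}^2$. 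For $\varepsilon$ small this is at most $C_\delta\varepsilon^2\cdot\Hthreex{\tilde b}^2/\varepsilon^2$ and is absorbed.
\item From Lemma \ref{lem:psi}: $C_\delta\varepsilon^2\Hthreex{\nabla_x\psi}^2$. This is absorbed the same way once $\varepsilon$ is small relative to the now-fixed $\delta$.
\end{itemize}
After these absorptions the left side retains $\Hthreex{\tilde b}^2/\varepsilon^2+\Hthreex{\nabla_x\psi}^2+\varepsilon^2\Hthreex{\partial_t\psi}^2+\frac{\delta_1}2\Hthreex{\nabla_x\phi}^2$.

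\textbf{Remaining right-hand terms.} These are collected using $(1+t)^{-2}\le(1+t)^{-3/2}$ and $\varepsilon^4\le\varepsilon^2$, together with $\HxxxLv{\mi g}\le\HxxxxLv{\mi g}$. The term $C\Ltwox{\nabla_x^4 b}^2$ comes from Lemma \ref{lem:nabla phi}. The result is the right side of \eqref{eq:total error}.

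\textbf{Main obstacle.} The only delicate point is the order of constant selection. The $\delta$-terms involve $\Hthreex{\nabla_x\phi}^2$, whose only source of dissipation is the $\delta_1$-weighted Lemma \ref{lem:nabla phi}. Meanwhile, $C_\delta$ blows up as $\delta\to0$, so the terms carrying $C_\delta$ must be compensated by powers of $\varepsilon$. This forces the sequence: fix $\delta_1$, then $\delta$, then restrict $\varepsilon$.
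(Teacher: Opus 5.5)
Your proposal is correct and follows the paper's proof. The paper likewise adds Lemmas \ref{lem:phi} and \ref{lem:psi}, absorbs $C_\delta\Hthreex{\tilde b}^2$ and $C_\delta\varepsilon^2\Hthreex{\nabla_x\psi}^2$ via $\varepsilon^2\le 1/(8C_\delta)$, and then adds $\delta_1$ times Lemma \ref{lem:nabla phi} with $C\delta_1\le\frac14$ and $2\delta\le\frac{\delta_1}{2}$; your explicit ordering (fix $\delta_1$, then $\delta$, then restrict $\varepsilon$) is exactly the logic implicit in the paper's choices around \eqref{eq:varepsilon-0}, stated more cleanly.
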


\begin{proof}	
Adding \eqref{eq:lem phi} and \eqref{eq:lem psi} together and choosing
\begin{equation}\label{eq:varepsilon-0}
\delta\leq \frac{1}{8},\ \text{and}\ \varepsilon^2\leq \frac{1}{8C_\delta},
\end{equation}
we obtain
	\begin{equation}\label{eq:dtphipsi}
\begin{aligned}
&\dt{t}\left( \Hthreex{(\phi,\psi,\tilde{b})}^2+\varepsilon^2\Hthreex{\nabla_x \psi}^2+\sum_{ |\alpha|\le 3}2\varepsilon\intx{\partial_x^\alpha \tilde{b}\cdot \partial_x^\alpha\psi} \right)\\
&\quad +\frac{5}{4}\frac{\Hthreex{\tilde{b}}^2}{\varepsilon^2}+\frac{5}{4}(\Hthreex{\nabla_x \psi}^2+\varepsilon^2\Hthreex{\partial_t\psi}^2)\\
\le& C_\delta(1+t)^{-\frac{3}{2}}\Hthreex{(\phi,\psi)}^2+C\E(t)^{\frac{1}{2}}\D(t)+C_\delta\HxxxxLv{\mi g}^2+C_\delta\varepsilon^2(1+t)^{-\frac{3}{2}}\\
&+C_\delta\Ltwox{(u_0,m_0)}^2\Ltwox{\nabla_x(u_0,m_0)}^2\Hthreex{(\phi,\psi)}^2+2\delta\Hthreex{\nabla_x\phi}^2.
\end{aligned}
\end{equation}
Multiplying \eqref{eq:nabla phi} by a small positive constant $\delta_1$ such that $C\delta_1\leq \frac{1}{4} $, adding the results to \eqref{eq:dtphipsi} and choosing $2\delta\leq \frac{\delta_1}{2}$, we get \eqref{eq:total error}.
\end{proof}

\medskip

Now, we derive the higher-order energy estimates for $g$ to obtain the dissipation of $\HxxxxLv{\mi g}$ and $\Ltwox{\nabla_x^4b}$. The evolution of $\partial_x^\alpha g$ is governed by
 \begin{equation}\label{eq:g}
 \varepsilon^2\partial_x^\alpha \partial_tg+\varepsilon v\cdot \nabla_x\partial_x^\alpha g+\L \partial_x^\alpha g+\varepsilon \partial_x^\alpha(u\cdot\nabla_vg)-\frac{\varepsilon}{2}v\cdot \partial_x^\alpha(ug)=0.
 \end{equation}

 \begin{lemma}\label{lem:g}
 		Letting $g$ be the solution to \eqref{g} on $[0,T^\varepsilon_{max}) $ given by Propositions \ref{prop_limit_decay} and \ref{prop:local_existence}, we have
 	\begin{equation}\label{eq:lem g}
 	\begin{aligned}
 	&\varepsilon^2\dt{t}\HxxxxLv{ g}^2+\sum_{|\alpha|\le 4}c\Lv{\mi \partial_x^\alpha g}^2+\Hfourx{b}^2\\
 	\le& C_\delta(1+t)^{-2}(\varepsilon^2\HxxxxLv{g}^2+\Hthreex{(\phi,\psi)}^2)+C(\E(t)^{\frac{1}{2}}+\varepsilon)\D(t)+C_\delta\varepsilon^2(1+t)^{-2}\\
 	&+2\delta\Ltwox{\nabla_x(\phi,\psi)}^2+C_\delta\Ltwox{(u_0,m_0)}^2\Ltwox{\nabla_x(u_0,m_0)}^2\Ltwox{(\phi,\psi)}^2\\&+C_\delta\varepsilon^2\Ltwox{\nabla_x(u_0,m_0)}^2.
 	\end{aligned}
 	\end{equation} 	
 \end{lemma}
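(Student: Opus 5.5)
We take the $L^2_{x,v}$ inner product of \eqref{eq:g} with $\partial_x^\alpha g$ for each $|\alpha|\le 4$ and sum. The time derivative gives $\frac{\varepsilon^2}{2}\dt{t}\Ltwoxv{\partial_x^\alpha g}^2$. The transport term $\varepsilon v\cdot\nabla_x\partial_x^\alpha g$ vanishes after integration by parts in $x$. The dissipative property of $\L$, applied to $\partial_x^\alpha g$ (using $\P\partial_x^\alpha g=\partial_x^\alpha\P g$), gives $c\Lv{\mi\partial_x^\alpha g}^2+\Ltwox{\partial_x^\alpha b}^2$. Summing over $|\alpha|\le4$ produces the left-hand side of \eqref{eq:lem g}, up to the factor $2$, which is absorbed into the constants. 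It then remains to bound the two coupling terms
\[
I_1=-\varepsilon\intxv{\partial_x^\alpha(u\cdot\nabla_v g)\,\partial_x^\alpha g},\qquad I_2=\frac{\varepsilon}{2}\intxv{v\cdot\partial_x^\alpha(u g)\,\partial_x^\alpha g}.
\]

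When no derivative falls on $u$, the two top-order pieces combine. Since $\int u\cdot\nabla_v(\partial_x^\alpha g)\,\partial_x^\alpha g\,\d v=0$, the piece from $I_1$ vanishes, and the piece from $I_2$ reduces to $\frac{\varepsilon}{2}\intxv{(v\cdot u)|\partial_x^\alpha g|^2}$. We split $\partial_x^\alpha g=\partial_x^\alpha a\sqrt{\M}+\partial_x^\alpha b\cdot v\sqrt{\M}+\mi\partial_x^\alpha g$. Because $\int v|\sqrt{\M}|^2\,\d v=0$, the pure $a$–$a$ product vanishes, so every surviving product contains either $\partial_x^\alpha b$ or $\mi\partial_x^\alpha g$, both of which sit in $\D(t)$. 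Writing $u=u_0+\psi$, the $\psi$-part is bounded by $\varepsilon\Linftyx{\psi}(\ldots)\lesssim \E(t)^{1/2}\D(t)$, using that $\partial_x^\alpha a$ is controlled by $\varepsilon\HxxxxLv{g}\lesssim\E^{1/2}$.

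For the $u_0$-part we use the decay $\Linftyx{u_0}\le C_0(1+t)^{-1/2}$ from \eqref{decay_rates_infty_Hm}. The factor $\varepsilon\Linftyx{u_0}$ multiplying $|\partial_x^\alpha a|\,(|\partial_x^\alpha b|+\Lv{\mi\partial_x^\alpha g})$ is handled by Young's inequality: half is absorbed into the dissipation, and the rest is $C_\delta(1+t)^{-1}\varepsilon^2\Ltwox{\partial_x^\alpha a}^2$. This is weaker than the claimed $(1+t)^{-2}$, so we do not estimate it in this crude form. Instead we write $\partial_x^\alpha a=\partial_x^\alpha\phi+\partial_x^\alpha m_0$. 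The $m_0$-part is bounded by $C_\delta\varepsilon^2(1+t)^{-2}$ through \eqref{decay_rates_Hm}, or by $C_\delta\varepsilon^2\Ltwox{\nabla_x(u_0,m_0)}^2$. The $\phi$-part at order $4$ is $\varepsilon\,\partial_x^\alpha\phi$, with one derivative controlled by $\Hthreex{\nabla_x\phi}$ in $\D$, so it is bounded by $C\varepsilon\D(t)$, which is the origin of the $\varepsilon\D$ term.

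The commutator terms, where $\partial_x^\beta u$ with $|\beta|\ge1$, are treated the same way. For the $\psi$-parts we use Sobolev embedding in 2D ($L^4$–$L^4$ and $L^\infty$ on the lower-order factor) together with $\Hthreex{\nabla_x\psi}\subset\D$, obtaining $C\E^{1/2}\D$. For the $u_0$-parts we use the decays $\Linftyx{\nabla_x^k u_0}\lesssim(1+t)^{-(1+k)/2}$ and $\Ltwox{\nabla^k u_0}\lesssim(1+t)^{-k/2}$, giving $(1+t)^{-2}$ factors against $\varepsilon^2\HxxxxLv{g}^2$ and $\Hthreex{(\phi,\psi)}^2$. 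The velocity weights $v$ and $\nabla_v$ on $\mi g$ are absorbed by the $\nu$-norm, while on $\P g$ they only produce Gaussian moments. Lowest-order ($|\alpha|=0$) terms with the $L^4$ products of $u_0,m_0$ against $\phi,\psi$ are treated exactly as in \eqref{eq:J11-1}, yielding the terms $2\delta\Ltwox{\nabla_x(\phi,\psi)}^2+C_\delta\Ltwox{(u_0,m_0)}^2\Ltwox{\nabla_x(u_0,m_0)}^2\Ltwox{(\phi,\psi)}^2$.

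\textbf{Main obstacle.} The hard part is the product $\varepsilon(v\cdot u_0)\,\partial_x^\alpha a\,\partial_x^\alpha b$ at top order with $|\alpha|=4$. Here $a$ itself has no dissipation, only $\phi$ does, up to $\nabla^4$, and $u_0$ decays only like $(1+t)^{-1/2}$ in $L^\infty$. Our plan is to split $a=m_0+\phi$ as above. If needed, we move one derivative by integration by parts in $x$ onto $u_0\,\partial_x^\alpha b$, which gains decay from $\nabla u_0$ and places five derivatives on $b$ only when paired with $\varepsilon$. We expect that term to be handled through the $\mi$ and $b$ dissipation or the $\varepsilon\D$ slack.
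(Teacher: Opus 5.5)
Your overall route is the paper's. You use the same energy identity for \eqref{eq:g} and the vanishing of the top-order piece of the $u\cdot\nabla_v$ term. You also use the macro--micro splitting of the pairings, and the split $a=\phi+m_0$ in the top-order $u_0$ cross term, with $|\alpha|\ge1$ putting $\partial_x^\alpha\phi$ into $\D$; this is exactly where the paper's $C\varepsilon\D(t)$ comes from. Finally, you use decay of $(u_0,m_0)$ for the commutators and $L^4$ interpolation at $|\alpha|=0$.

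One step, however, does not hold as written: the $\psi$-part of the top-order cross term $\frac{\varepsilon}{2}\intx{\psi\cdot\partial_x^\alpha b\,\partial_x^\alpha a}$. Bounding $\varepsilon\Ltwox{\partial_x^\alpha a}\lesssim\E(t)^{1/2}$ leaves $\E(t)^{1/2}\Linftyx{\psi}\Ltwox{\partial_x^\alpha b}$. To reach $\E(t)^{1/2}\D(t)$ you would then need $\Linftyx{\psi}\lesssim\D(t)^{1/2}$. But $\D$ contains only derivatives of $\psi$, and in two dimensions these do not control $\Linftyx{\psi}$. You only have $\Linftyx{\psi}\lesssim\E^{1/2}$ or $\Linftyx{\psi}\lesssim\E^{1/4}\D^{1/4}$, giving $\E\D^{1/2}$ or $\E^{3/4}\D^{3/4}$. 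Neither can be absorbed, and after Young's inequality neither leaves a remainder that is integrable in time. The bound $\varepsilon\HxxxxLv{g}\lesssim\E^{1/2}$ is only usable when the velocity factor carries a derivative, $\partial_x^\beta\psi$ with $|\beta|\ge1$, as in the paper's $J_{411}$ and $J_{4212}$.

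The fix is to treat $\psi$ exactly as you treat $u_0$; this is the paper's $J_{421}$. For $1\le|\alpha|\le4$, write $a=\phi+m_0$ and use
\[
\varepsilon\Linftyx{\psi}\Ltwox{\partial_x^\alpha b}\Ltwox{\partial_x^\alpha\phi}\le C\E(t)^{\frac12}\D(t),\qquad \varepsilon\Ltwox{\psi}\Linftyx{\partial_x^\alpha m_0}\Ltwox{\partial_x^\alpha b}\le\delta\Ltwox{\partial_x^\alpha b}^2+C_\delta(1+t)^{-2}\Ltwox{\psi}^2,
\]
since now both $\partial_x^\alpha b$ and $\partial_x^\alpha\phi$ lie in $\D$. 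At $|\alpha|=0$, use $\varepsilon\Lfourx{\psi}\Lfourx{\phi}\Ltwox{b}\lesssim\E^{1/2}\D$, and handle $\varepsilon\Lfourx{\psi}\Lfourx{m_0}\Ltwox{b}$ as in \eqref{eq:J11-1}.

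Separately, the fallback in your last paragraph is not available: integrating by parts puts a fifth derivative on $b$, and neither $\E$ nor $\D$ controls $\nabla_x^5 b$. It is also unnecessary, because the split $a=\phi+m_0$ already settles that term.
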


 \begin{proof}
 	Multiplying \eqref{eq:g} by $\partial_x^\alpha g$ with $ |\alpha|\le 4$ and integrating over $\mathbb{R}^2\times\mathbb{R}^2$, we obtain
 	\begin{equation*}
 	\begin{aligned}
 	&\frac{\varepsilon^2}{2}\dt{t}\Ltwoxv{\partial_x^\alpha g}^2+c\Lv{\mi \partial_x^\alpha g}^2+\Ltwox{\partial_x^\alpha b}^2\\
 	\le& -\intxv{\varepsilon \partial_x^\alpha(u\cdot\nabla_vg)\partial_x^\alpha g}+\intxv{\frac{\varepsilon}{2}v\cdot \partial_x^\alpha(ug)\partial_x^\alpha g} := J_{41}+J_{42}.
 	\end{aligned}
 	\end{equation*}
 	For $J_{41}$, it vanishes for $|\alpha|=0$. For $1\le|\alpha|\le 4$, the Leibniz rule yields
 	\begin{equation*}
 	\begin{aligned}
 	J_{41}=&-\sum_{ \beta\le \alpha,|\beta|\neq 0}C^\beta_\alpha\intxv{\varepsilon \partial_x^\beta u\cdot\nabla_v \partial_x^{\alpha-\beta}g\partial_x^\alpha g}-\intxv{\varepsilon  u\cdot\nabla_v \partial_x^{\alpha}g\,\partial_x^\alpha g}\\
 	=&-\sum_{\beta\le \alpha,|\beta|\neq 0}C^\beta_\alpha\int \varepsilon \partial_x^\beta u\cdot\left(-\frac{1}{2}v\sqrt{\M}\partial_x^{\alpha-\beta}a+\nabla_v\Big(\partial_x^{\alpha-\beta}b\cdot v\sqrt{\M}\Big)+\partial_x^{\alpha-\beta}\nabla_v\mi g \right)\\
 	&\qquad\times\left( \partial_x^\alpha a\sqrt{\M}+\partial_x^\alpha b\cdot v\sqrt{\M}+\partial_x^\alpha \mi g \right)\, \d x \d v\\
 	=&\sum_{\beta\le \alpha,|\beta|\neq 0}C^\beta_\alpha\int \varepsilon \partial_x^\beta u\cdot\left(\frac{1}{2}v\sqrt{\M}\partial_x^{\alpha-\beta}a \right)
 	\partial_x^\alpha b\cdot v\sqrt{\M}\, \d x \d v\\
 	&-\sum_{ \beta\le \alpha,|\beta|\neq 0}C^\beta_\alpha\int \varepsilon \partial_x^\beta u\cdot\nabla_v\left(\partial_x^{\alpha-\beta}b\cdot v\sqrt{\M}\right)\left( \partial_x^\alpha a\sqrt{\M}+\partial_x^\alpha \mi g \right)\, \d x \d v\\
 	&-\sum_{ \beta\le \alpha,|\beta|\neq 0}C^\beta_\alpha\int \varepsilon \partial_x^\beta u\cdot\left(\partial_x^{\alpha-\beta}\nabla_v\mi g \right)\left( \partial_x^\alpha a\sqrt{\M}+\partial_x^\alpha b\cdot v\sqrt{\M}+\partial_x^\alpha \mi g \right)\, \d x \d v\\
 	=&\sum_{1\le i\le 3}J_{41i},
 	\end{aligned}
 	\end{equation*}	
 	where we have used \eqref{eq:orthe} and the following equality
 	\begin{equation*}
 	\intxv{\varepsilon  u\cdot\nabla_v \partial_x^{\alpha}g\,\partial_x^\alpha g}=-\frac{1}{2}\intxv{\varepsilon  \div_v u\,(\partial_x^{\alpha}g)^2}=0.
 	\end{equation*}	
 	For $J_{411}$, noticing that
 	\begin{equation}\label{eq:calcu M}
 	\intv{(v_i)^2\M}=1,\quad \intv{v_iv_j\M}=0,\ \text{for}\ i\neq j,
 	\end{equation}
 	we have
 	\begin{equation}\label{eq:J411}
 	\begin{aligned}
 	J_{411}=&\sum_{\beta\le \alpha,|\beta|\neq 0}C^\beta_\alpha\int \varepsilon \partial_x^\beta u\cdot\left(\frac{1}{2}v\sqrt{\M}\partial_x^{\alpha-\beta}a \right)
 	\partial_x^\alpha b\cdot v\sqrt{\M}\, \d x \d v\\
 	=&\frac{\varepsilon}{2}\sum_{\beta\le \alpha,|\beta|\neq 0}C^\beta_\alpha\intx{\partial_x^\beta (\psi+u_0)\cdot\partial_x^\alpha b\,\partial_x^{\alpha-\beta}a}\\
 	\leq&C\varepsilon\Hthreex{\nabla_x\psi}\Ltwox{\partial_x^\alpha b}\Hthreex{a}+C\varepsilon\sum_{\beta\le \alpha,|\beta|\neq 0}\Linftyx{\partial_x^\beta u_0}\Ltwox{\partial_x^\alpha b}\Hthreex{a}\\
 	\leq& C\E(t)^\frac{1}{2}\D(t)+\delta\Ltwox{\partial_x^\alpha b}^2+C_\delta(1+t)^{-2}\Hthreex{\phi}^2+C_\delta\varepsilon^2(1+t)^{-2},
 	\end{aligned}
 	\end{equation}		
 	where we  have used \eqref{eq:calcu M} and the following conclusion
 	\begin{equation}\label{eq:a H4}
 	\varepsilon\Hfourx{a}=\varepsilon\Hfourx{\intv{g\sqrt{\M}}}\leq C\varepsilon\HxxxxLv{g}\leq C\E(t)^{\frac{1}{2}}.
 	\end{equation}	
 	For $J_{412}$, using \eqref{eq:calcu M} again, we have
 	\begin{equation}\label{eq:J412}
 	\begin{aligned}
 	J_{412}=& -\sum_{\beta\le \alpha,|\beta|\neq 0}C^\beta_\alpha\int \varepsilon \partial_x^\beta u\cdot\nabla_v\left(\partial_x^{\alpha-\beta}b\cdot v\sqrt{\M} \right)\left( \partial_x^\alpha a\sqrt{\M}+\partial_x^\alpha \mi g \right)\, \d x \d v\\
 	=&-\varepsilon\sum_{\beta\le \alpha,|\beta|\neq 0}\frac{1}{2}C^\beta_\alpha\intx{\partial_x^\beta (\psi+u_0)\cdot\partial_x^{\alpha-\beta}b\,\partial_x^{\alpha}a}\\
 	&+\sum_{\beta\le \alpha,|\beta|\neq 0}C^\beta_\alpha\int \varepsilon \partial_x^\beta u\cdot\frac{v}{2}\partial_x^{\alpha-\beta}b\cdot v\sqrt{\M}\partial_x^\alpha \mi g \, \d x \d v\\
 	\le&C(\E(t)^\frac{1}{2}+\varepsilon)\D(t)+\delta\Hfourx{b}^2+C_\delta(1+t)^{-2}\varepsilon^2\Ltwoxv{\partial_x^\alpha g}^2.
 	\end{aligned}
 	\end{equation}		 	
 	Similarly, $J_{413}$ is estimated as follows:
 	\begin{equation}\label{eq:J413}
 	\begin{aligned}
 	&J_{413}\\
 	=& -\sum_{\beta\le \alpha,|\beta|\neq 0}C^\beta_\alpha\int \varepsilon \partial_x^\beta u\cdot\left(\partial_x^{\alpha-\beta}\nabla_v\mi g \right)\left( \partial_x^\alpha a\sqrt{\M}+\partial_x^\alpha b\cdot v\sqrt{\M}+\partial_x^\alpha \mi g \right)\, \d x \d v\\
 	=&\sum_{\beta\le \alpha,|\beta|\neq 0}C^\beta_\alpha\intxv{\left( -\varepsilon \partial_x^{\beta}u \cdot \frac{v}{2}\sqrt{\M}\partial_x^\alpha b\cdot v \partial_x^{\alpha-\beta}\mi g+\varepsilon \partial_x^{\beta}u \cdot\partial_x^{\alpha}\nabla_v\mi g \partial_x^{\alpha-\beta}\mi g \right) }\\
 	\le&C(\E(t)^\frac{1}{2}+\varepsilon)\D(t)+\delta(\Hfourx{b}^2+\Lv{\partial_x^\alpha \mi g}^2)+C_\delta(1+t)^{-2}\varepsilon^2\Ltwoxv{\partial_x^\alpha g}^2,
 	\end{aligned}
 	\end{equation}	 	
 	where we  have used
 	\begin{equation*}
 	\begin{aligned}
 	&-\int \varepsilon \partial_x^\beta u\cdot\left(\partial_x^{\alpha-\beta}\nabla_v\mi g \right)\,\partial_x^\alpha a\sqrt{\M}\, \d x \d v\\
 	=&\int \varepsilon \partial_x^\beta u\cdot\frac{-v}{2}\sqrt{\M}\mi \left( \partial_x^{\alpha-\beta}g \right)\,\partial_x^\alpha a\, \d x \d v=0.
 	 	\end{aligned}
 	\end{equation*}	
 	Combining \eqref{eq:J411}, \eqref{eq:J412} and \eqref{eq:J413} yields
 	\begin{equation}\label{eq:J41}
 	\begin{aligned}
 	J_{41}\le&C(\E(t)^\frac{1}{2}+\varepsilon)\D(t)+C_\delta(1+t)^{-2}\Hthreex{\phi}^2+C_\delta\varepsilon^2(1+t)^{-2}\\
 	&+2\delta(\Hfourx{b}^2+\Lv{\partial_x^\alpha \mi g}^2)+C_\delta(1+t)^{-2}\varepsilon^2\Ltwoxv{\partial_x^\alpha g}^2.
 	\end{aligned}
 	\end{equation}	 	
 	Next, we turn to $J_{42}$, which is expanded as
 	\begin{equation*}
 	\begin{aligned}
 	J_{42}&=\intxv{\frac{\varepsilon}{2}v\cdot \partial_x^\alpha(ug)\partial_x^\alpha g}\\
 	&= \frac{\varepsilon}{2} \sum_{ \theta\le \alpha}C^\theta_\alpha\int v \cdot \partial_x^\theta u \left( \partial_x^{\alpha-\theta} a \sqrt{\M} + \partial_x^{\alpha-\theta} b \cdot v \sqrt{\M} + \partial_x^{\alpha-\theta} \mi g \right) \\
 	&\qquad \times \left( \partial_x^\alpha a \sqrt{\M} + \partial_x^\alpha b \cdot v \sqrt{\M} + \partial_x^\alpha \mi g \right) \d x \d v \\
 	&= \frac{\varepsilon}{2} \sum_{ \theta\le \alpha}C^\theta_\alpha \int v \cdot \partial_x^\theta u \, \partial_x^{\alpha-\theta} a \sqrt{\M} \, \partial_x^\alpha b \cdot v \sqrt{\M} \, \d x \d v \\
 	&\quad + \frac{\varepsilon}{2}  \sum_{ \theta\le \alpha}C^\theta_\alpha\int v \cdot \partial_x^\theta u \, \partial_x^{\alpha-\theta} b \cdot v \sqrt{\M} \left( \partial_x^\alpha a \sqrt{\M} + \partial_x^\alpha \mi g \right) \d x \d v \\
 	&\quad + \frac{\varepsilon}{2} \sum_{ \theta\le \alpha}C^\theta_\alpha \int v \cdot \partial_x^\theta u \, \partial_x^{\alpha-\theta} \mi g\left( \partial_x^\alpha b \cdot v \sqrt{\M} + \partial_x^\alpha \mi g \right) \d x \d v\\
 	 &= \sum_{1\le i\le 3}J_{42i},
 	\end{aligned}
 	\end{equation*}		
 	where we  have used \eqref{eq:orthe}.
 	
 	For $J_{421}$, we have
 	\begin{equation*}
 	\begin{aligned}
 	J_{421}= \frac{\varepsilon}{2} \sum_{ \theta\le \alpha}C^\theta_\alpha \int \partial_x^\theta (\psi+u_0)\cdot \partial_x^\alpha b \, \partial_x^{\alpha-\theta} a \, \d x .
 	\end{aligned}
 	\end{equation*} 	
 	If $|\alpha|=0$, it follows that
 	\begin{equation}\label{eq:J421-1}
 	\begin{aligned}
 	J_{421}=& \frac{\varepsilon}{2} \int (\psi+u_0)\cdot b \, (\phi+m_0) \, \d x\\
 	\le&C\varepsilon \Lfourx{\psi}\Ltwox{b}\Lfourx{\phi}+C\varepsilon\Lfourx{\psi}\Ltwox{b}\Lfourx{m_0}+C\varepsilon\Lfourx{\phi}\Ltwox{b}\Lfourx{u_0}\\
 	&+C\varepsilon\Lfourx{u_0}\Ltwox{b}\Lfourx{m_0}\\
 	\le&C\E(t)^{\frac{1}{2}}\D(t)+\delta\Ltwox{b}^2+C_\delta\varepsilon^2\Ltwox{\nabla_x(u_0,m_0)}^2+\delta\Ltwox{\nabla_x(\phi,\psi)}^2\\
 	&+C_\delta\Ltwox{(u_0,m_0)}^2\Ltwox{\nabla_x(u_0,m_0)}^2\Ltwox{(\phi,\psi)}^2.
 	\end{aligned}
 	\end{equation}	
 	For $1\le |\alpha|\le 4$, there holds that
 		\begin{equation*}
 	\begin{aligned}
 	J_{421}= \frac{\varepsilon}{2} \int (\psi+u_0)\cdot \partial_x^\alpha b \, \partial_x^{\alpha} a \, \d x +\frac{\varepsilon}{2} \sum_{ \theta\le \alpha,|\theta|\neq 0}C^\theta_\alpha \int \partial_x^\theta (\psi+u_0)\cdot \partial_x^\alpha b \, \partial_x^{\alpha-\theta} a \, \d x =J_{4211}+J_{4212}.
 	\end{aligned}
 	\end{equation*}
   For $J_{4211}$, we have
 	\begin{equation}\label{eq:J4211}
 	\begin{aligned}
 	J_{4211}=& \frac{\varepsilon}{2} \int (\psi+u_0)\cdot \partial_x^\alpha b \, \partial_x^{\alpha} (\phi+m_0)\, \d x \\
 	\le&C\varepsilon (\Linftyx{\psi}+\Linftyx{u_0})\Ltwox{\partial_x^\alpha b}\Ltwox{\partial_x^\alpha \phi}+C\varepsilon (\Ltwox{\psi}+\Ltwox{u_0})\Ltwox{\partial_x^\alpha b}\Linftyx{\partial_x^\alpha m_0}\\
 	\le&C(\E(t)^\frac{1}{2}+\varepsilon)\D(t)+\delta\Hfourx{b}^2+C_\delta\varepsilon^2(1+t)^{-2}+C_\delta(1+t)^{-2}\Hthreex{\psi}^2.
 	\end{aligned}
 	\end{equation} 	
 	Similarly, for $J_{4212}$, we have
 	\begin{equation}\label{eq:J4212}
 	\begin{aligned}
 	J_{4212}=& \frac{\varepsilon}{2}  \sum_{\theta\le \alpha,|\theta|\not=0}C^\theta_\alpha\int \partial_x^\theta (\psi+u_0)\cdot \partial_x^\alpha b \, \partial_x^{\alpha-\theta} a \, \d x \\
 	\le&C\varepsilon\HxxxLv{g}\Hthreex{\nabla_x\psi}\Hfourx{b}+\delta \Ltwox{\partial_x^\alpha b}^2+C(1+t)^{-2}\varepsilon^2\HxxxLv{g}^2\\
 	\le&C\E(t)^\frac{1}{2}\D(t)+\delta \Ltwox{\partial_x^\alpha b}^2+C_\delta(1+t)^{-2}\varepsilon^2\HxxxLv{g}^2.
 	\end{aligned}
 	\end{equation} 	
   Combining \eqref{eq:J421-1}, \eqref{eq:J4211} and \eqref{eq:J4212}, we get
 	\begin{equation}\label{eq:J421}
 	\begin{aligned}
 	J_{421}
 	\le&2\delta \Ltwox{\partial_x^\alpha b}^2+C_\delta(1+t)^{-2}\varepsilon^2\HxxxLv{g}^2+C(\E(t)^{\frac{1}{2}}+\varepsilon)\D(t)\\
 	&+C_\delta(1+t)^{-2}\Hthreex{(\phi,\psi)}^2+C_\delta\varepsilon^2(1+t)^{-2}+C_\delta\varepsilon^2\Ltwox{\nabla_x(u_0,m_0)}^2\\
 	&+\delta\Ltwox{\nabla_x(\phi,\psi)}^2+C_\delta\Ltwox{(u_0,m_0)}^2\Ltwox{\nabla_x(u_0,m_0)}^2\Ltwox{(\phi,\psi)}^2.
 	\end{aligned}
 	\end{equation}
 	By a similar argument, $J_{422}$ is bounded by
 	\begin{equation}\label{eq:J422}
 	\begin{aligned}
 	J_{422} \le&\delta \Hfourx{b}^2+C_\delta(1+t)^{-2}\varepsilon^2\HxxxxLv{g}^2+C(\E(t)^{\frac{1}{2}}+\varepsilon)\D(t)\\
 	&+C_\delta(1+t)^{-2}\Hthreex{(\phi,\psi)}^2+C_\delta\varepsilon^2(1+t)^{-2}\\
 	&+\delta\Ltwox{\nabla_x(\phi,\psi)}^2+C_\delta\Ltwox{(u_0,m_0)}^2\Ltwox{\nabla_x(u_0,m_0)}^2\Ltwox{(\phi,\psi)}^2.
 	\end{aligned}
 	\end{equation}
 	For $J_{423}$, we have
 	\begin{equation}\label{eq:J423}
 	\begin{aligned}
 	J_{423} \le&C(\E(t)^\frac{1}{2}+\varepsilon)\D(t).
 	\end{aligned}
 	\end{equation}	
 	Summing up the estimates \eqref{eq:J421}, \eqref{eq:J422} and \eqref{eq:J423}, we derive
 	\begin{equation}\label{eq:J42}
 	\begin{aligned}
 	J_{42}\leq &3\delta \Hfourx{b}^2
 	+C_\delta(1+t)^{-2}\varepsilon^2\HxxxxLv{g}^2
 	+C\big(\E(t)^{\frac12}+\varepsilon\big)\D(t) \\
 	&+C_\delta(1+t)^{-2}\Hthreex{(\phi,\psi)}^2
 	+C_\delta\varepsilon^2(1+t)^{-2}
 	+C_\delta\varepsilon^2\Ltwox{\nabla_x(u_0,m_0)}^2 \\
 	&+2\delta\Ltwox{\nabla_x(\phi,\psi)}^2
 	+C_\delta\Ltwox{(u_0,m_0)}^2\Ltwox{\nabla_x(u_0,m_0)}^2\Ltwox{(\phi,\psi)}^2.
 	\end{aligned}
 	\end{equation}

Consequently, by combining \eqref{eq:J41} and \eqref{eq:J42} and choosing $\delta$ sufficiently small, we conclude \eqref{eq:lem g} by summing over $|\alpha| \le 4$.	This completes the proof of the lemma. 	 	 	 	 	 	 	 	 	 	 	 	
 \end{proof}

With Corollary \ref{lem:error} and Lemma \ref{lem:g} in hand, we prove \eqref{eq:ED} in Theorem \ref{thm:priori} as follows.
\paragraph{Proof of \eqref{eq:ED}:}
	Multiplying \eqref{eq:lem g} by a large positive constant $K $, adding the result to \eqref{eq:total error}, and choosing $\delta,\ \varepsilon$ small enough such that
	\begin{equation}\label{eq:varepsilon-1}
   C\leq \frac{1}{2}cK,\ C\leq \frac{1}{2}K,\ 2K\delta \leq \frac{\delta_1}{8},\ C\varepsilon\leq \frac{\delta_1}{8}, 	
	\end{equation}
	 we get
		\begin{equation}\label{eq:dif E}
	\begin{aligned}
	&\dt{t}\left( \Hthreex{(\phi,\psi,\tilde{b})}^2+\varepsilon^2\Hthreex{\nabla_x \psi}^2 + K\varepsilon^2\HxxxxLv{ g}^2 \right)\\
	&\quad+\dt{t}\sum_{ |\alpha|\le 3}2\varepsilon\intx{\left( \partial_x^\alpha \tilde{b}\cdot \partial_x^\alpha\psi+\delta_1\partial_x^\alpha \tilde{b}\cdot \nabla_x \partial_x^\alpha \phi\right) }+\frac{\Hthreex{\tilde{b}}^2}{\varepsilon^2}+\frac{1}{2}\Hthreex{\nabla_x \psi}^2\\
	&+\varepsilon^2\Hthreex{\partial_t\psi}^2+\frac{\delta_1}{4}\Hthreex{\nabla_x\phi}^2 + \frac{cK}{2}\sum_{|\alpha|\le 4}\Lv{\mi \partial_x^\alpha g}^2 + \frac{K}{2}\Hfourx{b}^2\\
	\le& C(1+t)^{-\frac{3}{2}}\E(t) + C\E(t)^{\frac{1}{2}}\D(t) + C\varepsilon^2(1+t)^{-\frac{3}{2}}\\
	&+C\Ltwox{(u_0,m_0)}^2\Ltwox{\nabla_x(u_0,m_0)}^2\Hthreex{(\phi,\psi)}^2+C\varepsilon^2\Ltwox{\nabla_x(u_0,m_0)}^2.
	\end{aligned}
	\end{equation}
Recalling the definitions of $\E(t)$ and $\D(t)$ in \eqref{eq:def E}--\eqref{eq:def D}, and integrating the previous inequality over $(0, t)$ yields
\begin{equation}\label{eq:E}
\begin{aligned}
&\E(t)+\intt{\D(s)}{s}\\
\le& C\E(0)+C\intt{\left([(1+s)^{-\frac{3}{2}}+\Ltwox{\nabla_x(u_0,m_0)}^2(s)]\E(s)+\E(s)^\frac{1}{2}\D(s)\right) }{s}+C\varepsilon^2,
\end{aligned}
\end{equation}
	where we have applied the estimates
	\begin{equation*}
	\varepsilon^2\Hfourx{\phi}^2\leq C\varepsilon^2\Hfourx{a}^2+C\varepsilon^2\Hfourx{m_0}^2\leq C\varepsilon^2\HxxxxLv{g}^2+C\varepsilon^2,
	\end{equation*}
	\begin{equation*}
	\begin{aligned}
	&\Big|\sum_{ |\alpha|\le 3}2\varepsilon\intx{\left( \partial_x^\alpha \tilde{b}\cdot \partial_x^\alpha\psi+\delta_1\partial_x^\alpha \tilde{b}\cdot \nabla_x \partial_x^\alpha \phi\right) }\Big|\\
	\le&\frac{1}{2}\Hthreex{\tilde{b}}^2+C\varepsilon^2\Hthreex{\psi}^2+C\delta_1^2\varepsilon^2\HxxxxLv{g}^2+C\varepsilon^2,
	\end{aligned}
	\end{equation*}
and
\begin{equation*}
 C\delta_1^2\le \frac{K}{2}.
\end{equation*}
	
Under the conditions of Theorem \ref{thm:priori}, we have
	\begin{equation}\label{def:priori}
	\E(t)\le   2\bar{C}\left(\E(0)+\varepsilon^2 \right),\ \text{for}\ t\in [0,T]\subset [0,T^\varepsilon_{max}),
	\end{equation}
where $\bar{C}=Ce^{C} $ is a generic constant determined by \eqref{eq:T}. Choosing $\E(0)\leq \eta $ such that
\begin{equation}\label{eq:eta-1varepsilon-4}
\E(t)\le   2\bar{C}\left(\E(0)+\varepsilon^2 \right)\le 2\bar{C}(\eta+\varepsilon^2)\leq \frac{1}{4C^2},
\end{equation}
and using (\ref{eq:E}), we have
	\begin{equation}\label{eq:E priori}
	\E(t)+\intt{\D(s)}{s}\le C\E(0)+C\intt{[(1+s)^{-\frac{3}{2}}+\Ltwox{\nabla_x(u_0,m_0)}^2(s)]\E(s)}{s}+C\varepsilon^2.
	\end{equation}
	Applying Gr\"onwall's inequality to \eqref{eq:E priori} over $[0,t] $ yields
\begin{equation}\label{eq:T}
	\E(t) \le Ce^{C} \left(\E(0) + \varepsilon^2 \right)=\bar{C}\left(\E(0) + \varepsilon^2 \right),\quad \intt{\D(s)}{s}\le \bar{C}_1(\E(0)+\varepsilon^2),
	\end{equation}
	where we have used
	\begin{equation*}
	\int_{0}^{+\infty} [(1+s)^{-\frac{3}{2}}+\Ltwox{\nabla_x(u_0,m_0)}^2(s)] \d s \le C,
	\end{equation*} and $\bar{C}_1$ is a constant depending only on $C$.
   This completes the proof.

\subsection{Estimates for the microscopic component}\label{subsec:2}
In this subsection, we derive the \textit{a priori} estimates for the microscopic part $\mi g$. We first estimate its spatial derivatives.
\begin{lemma}\label{lem:H2x mi}
	Under the conditions of Theorem \ref{thm:priori}, letting $\mi g$ be the solution to \eqref{i-p}  on $[0,T]\subset [0,T^\varepsilon_{max}) $ given by Proposition \ref{prop:local_existence}, then there holds that
	\begin{equation}\label{eq:H2x mi}
	\begin{aligned}
	& \HxxLv{ \mi g}^2+\int_{0}^{t}\sum_{ |\alpha|\le 2}\frac{\Lv{\partial_x^\alpha \mi g}^2}{\varepsilon^2}\d s
	\le C\left(\E(0) + \varepsilon^2 \right)+\HxxLv{ (\mi g)^{in}}^2.
	\end{aligned}
	\end{equation}	
\end{lemma}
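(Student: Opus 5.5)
We work directly with the microscopic equation \eqref{i-p}. For $|\alpha|\le 2$ we apply $\partial_x^\alpha$ to \eqref{i-p}, multiply by $\partial_x^\alpha\mi g$ and integrate over $\mathbb{R}^2\times\mathbb{R}^2$. The transport term $\frac1\varepsilon v\cdot\nabla_x\partial_x^\alpha\mi g$ vanishes after integration by parts in $x$. Since $\L$ commutes with $\partial_x^\alpha$ and is coercive on the microscopic part, the operator term gives $\frac{c}{\varepsilon^2}\Lv{\partial_x^\alpha\mi g}^2$. This yields
\[
\frac12\dt{t}\Ltwoxv{\partial_x^\alpha\mi g}^2+\frac{c}{\varepsilon^2}\Lv{\partial_x^\alpha\mi g}^2\le \Big|\intxv{\partial_x^\alpha R_{mi}\,\partial_x^\alpha\mi g}\Big|.
\]
The whole point is that every term of $R_{mi}$ carries only a factor $\frac1\varepsilon$. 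Pairing each term with $\partial_x^\alpha \mi g$ in the $\nu$-norm and using Young's inequality, we bound it by
\[
\frac{c}{4\varepsilon^2}\Lv{\partial_x^\alpha\mi g}^2+C\|\text{(source)}\|^2 .
\]
The $\frac1\varepsilon$ in the source is then compensated, and what remains must be integrable in time by the bound $\int_0^t\D\,ds\le\bar C_1(\E(0)+\varepsilon^2)$ from \eqref{eq:ED}.

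We treat the terms of $R_{mi}$ one by one.

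\emph{Linear macroscopic terms.} These are $\frac1\varepsilon v\cdot\nabla_x(b\cdot v)\sqrt{\M}-\frac1\varepsilon\div_x b\sqrt{\M}$. Pairing them with $\partial_x^\alpha\mi g$ gives the bound
\[
\frac{c}{8\varepsilon^2}\Lv{\partial_x^\alpha\mi g}^2+C\Ltwox{\nabla_x\partial_x^\alpha b}^2 .
\]
For $|\alpha|\le2$ we have $\Ltwox{\nabla_x\partial_x^\alpha b}^2\le\Hfourx{b}^2\le\D$. Alternatively, one can note that $v\cdot\nabla_x(b\cdot v)\sqrt{\M}-\div_x b\sqrt{\M}$ already lies in the microscopic range.

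\emph{Last term.} The term $\frac1\varepsilon(\int vv\cdot\nabla_x\mi g\sqrt{\M}\,dv)\cdot v\sqrt{\M}$ is in the range of $\P_1$. It is therefore orthogonal to $\mi g$ by \eqref{eq:orthe} and drops out.

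\emph{Terms involving $u$.} These are $\frac1\varepsilon(u\cdot b)\sqrt{\M}-\frac1\varepsilon(u\cdot v)(b\cdot v)\sqrt{\M}$ together with $\frac1\varepsilon u\cdot\nabla_v\mi g-\frac1{2\varepsilon}v\cdot u\,\mi g$. We write $u=u_0+\psi$ and use Leibniz's rule, estimating the factors with the $L^\infty_x$ bounds \eqref{decay_rates_infty_Hm} for $u_0$ and with $\|\psi\|_{H^3_x}\le\E^{1/2}$ for $\psi$. For the $b$-terms this gives a bound
\[
\frac{c}{8\varepsilon^2}\Lv{\partial_x^\alpha\mi g}^2+C\|u\|_{H^3_x}^2\Htwox{b}^2\le\frac{c}{8\varepsilon^2}\Lv{\partial_x^\alpha\mi g}^2+C\D .
\]
Here $\|u\|_{H^3}$ is uniformly bounded by Proposition \ref{prop_limit_decay} and \eqref{def:priori}.

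\emph{Main obstacle.} The terms $\frac1\varepsilon u\cdot\nabla_v\mi g$ and $\frac1\varepsilon v\cdot u\,\mi g$ are the delicate ones. When all derivatives fall on $\mi g$, the first is skew because $\div_v u=0$, so it vanishes. The second, however, gives
\[
\frac1\varepsilon\Linftyx{u}\,\Lv{\partial_x^\alpha\mi g}\,\Ltwoxv{\partial_x^\alpha\mi g},
\]
and $\Linftyx{u}$ is not small because $u_0$ is large. We handle it by bounding it by
\[
\frac{c}{8\varepsilon^2}\Lv{\partial_x^\alpha\mi g}^2+C\Linftyx{u}^2\Lv{\partial_x^\alpha\mi g}^2 .
\]
The last term is time-integrable, because $\sum_{|\alpha|\le4}\Lv{\partial_x^\alpha\mi g}^2$ is part of $\D$. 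When derivatives fall on $u$, we place $\partial_x^\beta u$ in $L^\infty_x$ or $L^4_x$ and use $\Hthreex{\nabla_x\psi}$, $\E\le 2\bar C(\E(0)+\varepsilon^2)$ and the decay of $u_0$. Each such term is controlled by $\frac{c}{8\varepsilon^2}\Lv{\partial_x^\alpha\mi g}^2+C\sum_{|\alpha'|\le2}\Lv{\partial_x^{\alpha'}\mi g}^2$. All of these remainders are bounded by $C\D$.

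Summing over $|\alpha|\le2$, absorbing the $\frac{c}{2\varepsilon^2}$ terms into the left-hand side, integrating in time and using \eqref{eq:ED}, we obtain
\[
\HxxLv{\mi g}^2+\int_0^t\sum_{|\alpha|\le2}\frac{\Lv{\partial_x^\alpha\mi g}^2}{\varepsilon^2}\,ds\le\HxxLv{(\mi g)^{in}}^2+C\int_0^t\D\,ds,
\]
which yields \eqref{eq:H2x mi}. The initial term enters with coefficient $1$ because we work at the level $\frac12\dt{t}$ and absorb by halves.
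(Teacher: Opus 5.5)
Your proposal is correct and follows the paper's proof. Both apply the $\partial_x^\alpha$ ($|\alpha|\le 2$) energy estimate to \eqref{i-p}, use coercivity of $\L$ on $\partial_x^\alpha\mi g$, and apply Young's inequality to each piece of $R_{mi}$ so that the $\varepsilon^{-2}$-weighted parts are absorbed and the remainder is bounded by $C\D(t)$, then integrate in time using \eqref{eq:ED}.

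Your two deviations are minor improvements:
\begin{itemize}
\item The last term of $R_{mi}$ lies in the range of $\P_1$, so it vanishes against $\partial_x^\alpha\mi g$; the paper estimates it instead.
\item You control $\frac{1}{2\varepsilon}v\cdot u\,\mi g$ by Young's inequality against the $\varepsilon$-free dissipation $\sum_{|\alpha|\le 4}\Lv{\partial_x^\alpha\mi g}^2$, which is part of $\D$, rather than by the smallness condition \eqref{eq:varepsilon-2}.
\end{itemize}

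One bookkeeping point, shared with the paper: the time integral actually carries the coercivity constant $c$. Normalizing its coefficient to $1$ therefore puts a factor $1/c$, not $1$, in front of $\HxxLv{(\mi g)^{in}}^2$, so your closing claim about coefficient $1$ does not hold as displayed.
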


\begin{proof}
	Applying $\partial_x^\alpha$ with $|\alpha|\le 2 $ to \eqref{i-p}, we obtain
	\begin{equation}
	\label{eq:mi alpha}
	\begin{aligned}
	\partial_x^\alpha\partial_t (\mi g) + \frac{1}{\varepsilon}v\cdot \nabla_x\partial_x^\alpha(\mi g)+\frac{1}{\varepsilon^2}\L(\partial_x^\alpha\mi g) + \partial_x^\alpha R_{mi}=0,
	\end{aligned}
	\end{equation}
	where the source term $R_{mi}$ is given by
	\begin{equation*}\label{eq:Rmi}
	\begin{aligned}
	R_{mi}=&  \frac{1}{\varepsilon}v\cdot\nabla_x(b\cdot v)\sqrt{\M} - \frac{1}{\varepsilon}(\div_x b)\sqrt{\M} + \frac{1}{\varepsilon}(u\cdot b)\sqrt{\M} - \frac{1}{\varepsilon}(u\cdot v)(b\cdot v)\sqrt{\M} \\
	&+ \frac{1}{\varepsilon}u\cdot \nabla_v(\mi g) - \frac{1}{2\varepsilon}v\cdot u \mi g - \frac{1}{\varepsilon}\left(\int vv\cdot \nabla_x(\mi g\sqrt{\M})\mathrm{d}v \right) \cdot v\sqrt{\M}.
	\end{aligned}
	\end{equation*}
	Multiplying \eqref{eq:mi alpha} by $\partial_x^\alpha \mi g$ and integrating over $\mathbb{R}^2\times\mathbb{R}^2$, we have
	\begin{equation}\label{eq:mi g-0}
	\begin{aligned}
	\frac{1}{2}\dt{t}\Ltwoxv{\partial_x^\alpha \mi g}^2+\frac{c\Lv{\partial_x^\alpha \mi g}^2}{\varepsilon^2}\le -\intxv{\partial_x^\alpha R_{mi} \partial_x^\alpha \mi g},
	\end{aligned}
	\end{equation}
	where we have used
	\begin{equation*}
	\mi \partial^\alpha_x \mi g =\mi \mi \partial^\alpha_x g =\partial^\alpha_x \mi g.
	\end{equation*}
	The right-hand side of \eqref{eq:mi g-0} can be decomposed as follows:
	\begin{equation*}
	\begin{aligned}
	&-\intxv{\partial_x^\alpha R_{mi} \partial_x^\alpha \mi g}\\
	=&\intxv{\partial_x^\alpha\left(-  \frac{1}{\varepsilon}v\cdot\nabla_x(b\cdot v)\sqrt{\M}+ \frac{1}{\varepsilon}(\div_x b)\sqrt{\M}\right)  \partial_x^\alpha \mi g}\\
	&-\intxv{\partial_x^\alpha\left( \frac{1}{\varepsilon}(u\cdot b)\sqrt{\M}-\frac{1}{\varepsilon}(u\cdot v)(b\cdot v)\sqrt{\M} \right)  \partial_x^\alpha \mi g}\\
	&-\intxv{\partial_x^\alpha\left( \frac{1}{\varepsilon}u\cdot \nabla_v(\mi g)-\frac{1}{2\varepsilon}v\cdot u \mi g \right)  \partial_x^\alpha \mi g}\\
	&+\intxv{\partial_x^\alpha\left(\frac{1}{\varepsilon}\left(\int vv\cdot \nabla_x(\mi g\sqrt{\M})\mathrm{d}v \right) \cdot v\sqrt{\M}\right)  \partial_x^\alpha \mi g}\\
	:=&\sum_{i=1}^4 J_{5i}.
	\end{aligned}
	\end{equation*}
	
	For the first term $J_{51}$, we deduce that
	\begin{equation*}
	\begin{aligned}
	J_{51}=&\intxv{\partial_x^\alpha\left(-  \frac{1}{\varepsilon}v\cdot\nabla_x(b\cdot v)\sqrt{\M}+ \frac{1}{\varepsilon}(\div_x b)\sqrt{\M}\right)  \partial_x^\alpha \mi g}\\
	\le& C_\delta\Hthreex{b}^2+\delta\frac{\Ltwoxv{\partial_x^\alpha \mi g}^2}{\varepsilon^2}.
	\end{aligned}
	\end{equation*}	
	Next, for $J_{52}$, it follows that
	\begin{equation*}
	\begin{aligned}
	J_{52}=&-\intxv{\partial_x^\alpha\left( \frac{1}{\varepsilon}(u\cdot b)\sqrt{\M}-\frac{1}{\varepsilon}(u\cdot v)(b\cdot v)\sqrt{\M} \right)  \partial_x^\alpha \mi g}\\
	\le&C_\delta(\Htwox{\psi}^2+\Htwox{u_0}^2)\Htwox{b}^2+\delta\frac{\Ltwoxv{\partial_x^\alpha \mi g}^2}{\varepsilon^2}.
	\end{aligned}
	\end{equation*}	 	
	Similarly, for $J_{53}$, we obtain
	\begin{equation*}
	\begin{aligned}
	J_{53}=&-\intxv{\partial_x^\alpha\left( \frac{1}{\varepsilon}u\cdot \nabla_v(\mi g)-\frac{1}{2\varepsilon}v\cdot u \mi g \right)  \partial_x^\alpha \mi g}\\
	\le&C\varepsilon(\Htwox{\psi}+\Htwox{u_0})\sum_{ |\alpha|\le 2}\frac{\Lv{\partial_x^\alpha \mi g}^2}{\varepsilon^2}.
	\end{aligned}
	\end{equation*}	
	Finally, for $J_{54}$, we have
	\begin{equation*}
	\begin{aligned}
	J_{54}=& \intxv{\partial_x^\alpha\left(\frac{1}{\varepsilon}\left(\int vv\cdot \nabla_x(\mi g\sqrt{\M})\mathrm{d}v \right) \cdot v\sqrt{\M}\right)  \partial_x^\alpha \mi g}\\	
	\le&C_\delta \HxxxLv{\mi g}^2+\delta\sum_{ |\alpha|\le 2}\frac{\Lv{\partial_x^\alpha \mi g}^2}{\varepsilon^2}.
	\end{aligned}
	\end{equation*}	 	
	
	Combining the above estimates, summing over $|\alpha|\le 2$, and choosing $\delta > 0$ sufficiently small, we arrive at
	\begin{equation}\label{eq:dt mi H2x}
	\begin{aligned}
	&\dt{t}\HxxLv{ \mi g}^2+\sum_{ |\alpha|\le 2}\frac{c\Lv{\partial_x^\alpha \mi g}^2}{\varepsilon^2}\le C\D(t),
	\end{aligned}
	\end{equation}
	where we have used
	\begin{equation}\label{eq:varepsilon-2}
	C\varepsilon(\Htwox{\psi}+\Htwox{u_0})\le C\varepsilon (\E(t)^\frac{1}{2}+1)\le \frac{c}{4}.
	\end{equation}
	Integrating \eqref{eq:dt mi H2x} over $(0,t)$ and utilizing \eqref{eq:ED} yields \eqref{eq:H2x mi}.
\end{proof}

Next, we establish further higher derivatives of $\mi g$ with respect to $v$.
\begin{lemma}\label{prop:mi H2xv}
	Under the conditions of Theorem \ref{thm:priori}, letting $\mi g$ be the solution to \eqref{i-p}  on $[0,T]\subset [0,T^\varepsilon_{max}) $ given by Proposition \ref{prop:local_existence}, then there holds that
	\begin{equation}\label{eq:mi H2xH1v}
	\begin{aligned}
	&\HxxHv{\nabla_v \mi g}^2+\int_{0}^{t}\sum_{ |\alpha|\le 2}\sum_{ 1\le|\beta|\le 2}\frac{\Lv{\partial_x^\alpha \partial_v^\beta (\mi g)}^2}{\varepsilon^2}\d s\\
	\le& C\left(\E(0) + \varepsilon^2 \right)+C\HxxHvv{ \mi g^{in}}^2.
	\end{aligned}
	\end{equation}
\end{lemma}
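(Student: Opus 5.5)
We apply $\partial_x^\alpha\partial_v^\beta$ with $|\alpha|\le 2$ and $1\le|\beta|\le 2$ to the microscopic equation \eqref{i-p}. We multiply by $\partial_x^\alpha\partial_v^\beta\mi g$, integrate over $\mathbb{R}^2\times\mathbb{R}^2$, and run an induction on $|\beta|$, starting from Lemma \ref{lem:H2x mi} as the base case $|\beta|=0$.

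Two commutators appear, because $\partial_v$ does not commute with the transport or with $\L$. For the transport, $[\partial_{v_i},v\cdot\nabla_x]=\partial_{x_i}$, which contributes
\[
\frac1\varepsilon\int \partial_x^{\alpha+e_i}\partial_v^{\beta-e_i}\mi g\,\partial_x^\alpha\partial_v^\beta\mi g\,\d x\d v .
\]
For the Fokker--Planck operator we write $\L=-\Delta_v+\frac{|v|^2}{4}-\frac12$, so $[\partial_{v_i},\L]=\frac{v_i}{2}$. This contributes a term of order $\varepsilon^{-2}$ with one fewer $v$-derivative and a weight $v$, and it is controlled by the $\nu$-norm at level $\beta-e_i$.

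We get dissipation at level $\beta$ from the identity $\int \L h\,h=\|\nabla_v h\|^2+\frac14\||v|h\|^2-\frac12\|h\|^2$. This gives coercivity $\frac{c}{\varepsilon^2}\Lv{\partial_x^\alpha\partial_v^\beta\mi g}^2$ up to a lower-order term $\frac{C}{\varepsilon^2}\Ltwoxv{\partial_x^\alpha\partial_v^\beta\mi g}^2$. That lower-order term is bounded by the $\nu$-norm at level $\beta-e_i$, so it is handled by the previous induction step.

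The main obstacle is the transport commutator. Its $\varepsilon^{-1}$ weight appears together with an extra $x$-derivative, and in the $|\alpha|=2$ case this gives three $x$-derivatives. We split it by Young's inequality:
\[
\frac1\varepsilon|\cdots|\le \delta\frac{\Lv{\partial_x^\alpha\partial_v^\beta\mi g}^2}{\varepsilon^2}+C_\delta\Ltwoxv{\partial_x^{\alpha+e_i}\partial_v^{\beta-e_i}\mi g}^2 .
\]
For $|\beta|=1$ the last term is at most $\HxxxLv{\mi g}^2$, which is part of $\D(t)$ and is integrable by \eqref{eq:ED}. For $|\beta|=2$ the last term is $\|\partial_x^{\alpha+e_i}\nabla_v\mi g\|^2$. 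When $|\alpha|\le1$ it is controlled by the $|\beta|=1$ dissipation, which carries the good factor $\varepsilon^{-2}$ once the $|\beta|=1$ estimate is multiplied by a large constant. When $|\alpha|=2$ we would use the microscopic $H^4_x$ dissipation in $\D(t)$ together with an interpolation between $\partial_x^3$ and $\nabla_v$ derivatives. If this fails, we would instead weight the levels as $\varepsilon^{2|\beta|}$ and lean on the fact that the claim only needs $C\HxxHvv{(\mi g)^{in}}^2$ on the right-hand side. We therefore take the weighted combination (large constant on lower $|\beta|$) so that the $\delta$-terms are absorbed level by level.

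The source terms $\partial_x^\alpha\partial_v^\beta R_{mi}$ are treated as in Lemma \ref{lem:H2x mi}. The terms linear in $b$ are smooth in $v$ times $\sqrt{\M}$, so they are bounded by $\delta\varepsilon^{-2}\Lv{\cdot}^2+C_\delta\Hthreex{b}^2$. The terms involving $u\cdot\nabla_v\mi g$ and $v\cdot u\,\mi g$ are bounded by $C\varepsilon(\E^{1/2}+1)\varepsilon^{-2}\sum\Lv{\cdot}^2$ and absorbed using \eqref{eq:varepsilon-2}. The final term of $R_{mi}$ only involves $\HxxxLv{\mi g}^2$. Summing everything, we obtain
\[
\dt{t}\sum\Ltwoxv{\partial_x^\alpha\partial_v^\beta\mi g}^2+\frac{c}{\varepsilon^2}\sum\Lv{\partial_x^\alpha\partial_v^\beta\mi g}^2\le C\D(t)+\frac{C}{\varepsilon^2}\sum_{|\alpha|\le2}\Lv{\partial_x^\alpha\mi g}^2 .
\]
Integrating in time and using \eqref{eq:ED} and \eqref{eq:H2x mi} then gives \eqref{eq:mi H2xH1v}.
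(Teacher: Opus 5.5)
Your proof is correct and essentially follows the paper's scheme. You apply $\partial_x^\alpha\partial_v^\beta$ to \eqref{i-p}, treat the transport and $\L$-commutators (the paper's $J_{61}$, $J_{62}$), and handle $R_{mi}$ as in Lemma \ref{lem:H2x mi}. This yields the differential inequality \eqref{eq:ineq mi xv}, which you integrate using \eqref{eq:ED} and \eqref{eq:H2x mi}.

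The one organizational difference is your weighted induction in $|\beta|$. The paper instead reduces every lower-order term directly to level $\beta=0$:
\begin{itemize}
\item For the coercivity defect, it uses $\int h\,\L h\,\d x\d v\ge c\Lv{\mi h}^2$ with $h=\partial_x^\alpha\partial_v^\beta\mi g$. It then bounds the projection defect by $C\HxxLv{\mi g}$, moving $\partial_v^\beta$ onto $\sqrt{\M}$ in the coefficients of $\P h$.
\item Its $\L$-commutator bound leaves only $\Ltwoxv{\partial_x^\alpha\nabla_v\mi g}\le\Lv{\partial_x^\alpha\mi g}$. This works because the weight $v$ can be placed on the top-level factor.
\end{itemize}
Both ways of organizing the argument close.

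The fact that $\Lv{\cdot}$ already contains $\nabla_v$ also settles the case you hedged on. For $|\alpha|=|\beta|=2$, the transport commutator leaves
\[
\Ltwoxv{\partial_x^{\alpha+e_i}\nabla_v\mi g}^2\le\Lv{\partial_x^{\alpha+e_i}\mi g}^2\le\D(t),
\]
since $|\alpha+e_i|=3\le 4$. This is exactly the paper's bound for $J_{61}$, so no interpolation is needed.

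You should drop the fallback of weighting levels by $\varepsilon^{2|\beta|}$. It would only control $\varepsilon^{2|\beta|}\Vert\partial_v^\beta\cdot\Vert^2$, not the unweighted norm in \eqref{eq:mi H2xH1v}.

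Minor slip: in two dimensions $\L=-\Delta_v+\frac{|v|^2}{4}-1$, not $-\frac12$. This does not affect the commutator $[\partial_{v_i},\L]=\frac{v_i}{2}$ or the coercivity-up-to-$L^2$ argument.
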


\begin{proof}
	Applying $\partial_x^\alpha \partial_v^\beta$ to \eqref{i-p} with $|\alpha|\le 2 $ and $1\le |\beta| \le 2 $, we obtain
	\begin{equation}
	\label{eq:mi alpha beta}
	\begin{aligned}
	&\partial_x^\alpha \partial_v^\beta\partial_t (\mi g) + \frac{1}{\varepsilon}v\cdot \nabla_x\partial_x^\alpha\partial_v^\beta(\mi g)+\frac{1}{\varepsilon^2}\L (\partial_x^\alpha\partial_v^\beta\mi g)\\
	=&-\sum_{ \theta\le \beta,|\theta|\neq 0}C_\beta^\theta\frac{1}{\varepsilon}\partial_v^{\theta}(v)\cdot \nabla_x\partial_x^\alpha\partial_v^{\beta-\theta}(\mi g)\\
	&-\sum_{ \theta\le \beta,|\theta|\neq 0}C_\beta^\theta\frac{1}{4\varepsilon^2}\partial_v^\theta (|v|^2)\, \partial_v^{\beta-\theta}\partial_x^\alpha(\mi g)- \partial_x^\alpha \partial_v^\beta R_{mi},
	\end{aligned}
	\end{equation}
	where we have used the following equalities
	\begin{equation*}
	\L g=-\frac{1}{\sqrt{\M}}\div_v\left[\M\nabla_v\left(\frac{g}{\sqrt{\M}}\right)\right]=-\Delta_v g+\frac{|v|^2}{4}g-g,
	\end{equation*}
	and
	\begin{equation*}
	\partial^\alpha_x\partial^\beta_v\L g=\L \left( \partial^\alpha_x\partial^\beta_vg\right) +\sum_{ \theta\le \beta,|\theta|\neq 0}C_\beta^\theta\frac{1}{4}\partial_v^\theta (|v|^2)\, \partial_v^{\beta-\theta}\partial_x^\alpha g.
	\end{equation*}

	Multiplying (\ref{eq:mi alpha beta}) by $\partial_x^\alpha \partial_v^\beta (\mi g)$ and integrating over $\mathbb{R}^2\times\mathbb{R}^2$, we have
	\begin{equation}\label{new1}	
\begin{aligned}
	&\frac{1}{2}\dt{t}\Ltwoxv{\partial_x^\alpha \partial_v^\beta (\mi g)}^2+\frac{c\Lv{\mi\partial_x^\alpha \partial_v^\beta(\mi g)}^2}{\varepsilon^2}\\
	\le&-\intxv{\sum_{ \theta\le \beta,|\theta|\neq 0}C_\beta^\theta\frac{1}{\varepsilon}\partial_v^{\theta}(v)\cdot \nabla_x\partial_x^\alpha\partial_v^{\beta-\theta}(\mi g)\, \partial_x^\alpha \partial_v^\beta (\mi g) }\\
	&-\intxv{\sum_{ \theta\le \beta,|\theta|\neq 0}C_\beta^\theta\frac{1}{4\varepsilon^2}\partial_v^\theta (|v|^2)\, \partial_v^{\beta-\theta}\partial_x^\alpha(\mi g)\, \partial_x^\alpha \partial_v^\beta (\mi g) }\\
	&-\intxv{\partial_x^\alpha \partial_v^\beta R_{mi}\, \partial_x^\alpha \partial_v^\beta (\mi g) }\\
	:=&\sum_{i=1}^3 J_{6i}.
	\end{aligned}
	\end{equation}
	
	To estimate $J_{61}$, we note that
	\begin{equation}\label{eq:J61}
	\begin{aligned}
	J_{61}=&-\intxv{\sum_{ \theta\le \beta,|\theta|\neq 0}C_\beta^\theta\frac{1}{\varepsilon}\partial_v^{\theta}(v)\cdot \nabla_x\partial_x^\alpha\partial_v^{\beta-\theta}(\mi g)\, \partial_x^\alpha \partial_v^\beta (\mi g) }\\
	\le&\delta\frac{\Lv{\partial_x^\alpha\partial_v^\beta \mi g}^2}{\varepsilon^2}+C_\delta\sum_{ |\alpha_1|\le 3}\Lv{\partial_x^{\alpha_1}\mi g}^2\\
	\le&\delta\frac{\Lv{\partial_x^\alpha\partial_v^\beta \mi g}^2}{\varepsilon^2}+C_{\delta}\D(t).
	\end{aligned}
	\end{equation}
	For the second term $J_{62}$, we obtain
	\begin{equation}\label{eq:J62}
	\begin{aligned}
	J_{62}=&-\intxv{\sum_{ \theta\le \beta,|\theta|\neq 0}C_\beta^\theta\frac{1}{4\varepsilon^2}\partial_v^\theta (|v|^2)\, \partial_v^{\beta-\theta}\partial_x^\alpha(\mi g)\, \partial_x^\alpha \partial_v^\beta (\mi g) }\\
	\le&\delta\frac{\Lv{\partial_x^\alpha\partial_v^\beta \mi g}^2}{\varepsilon^2}+C_\delta\sum_{ |\alpha|\le 2}\frac{\Lv{\partial_x^\alpha\mi g}^2}{\varepsilon^2}.
	\end{aligned}
	\end{equation}
For $J_{63}$, substituting the expression for $R_{mi}$ yields
	\begin{equation}\label{eq:J63-0}
	\begin{aligned}
	J_{63}=&-\intxv{\partial_x^\alpha \partial_v^\beta R_{mi}\, \partial_x^\alpha \partial_v^\beta (\mi g) }\\
	=&-\intxv{\partial_x^\alpha \partial_v^\beta\left( \frac{1}{\varepsilon}v\cdot\nabla_x(b\cdot v)\sqrt{\M} - \frac{1}{\varepsilon}(\div_x b)\sqrt{\M} \right) \, \partial_x^\alpha \partial_v^\beta (\mi g) }\\
	&-\intxv{\partial_x^\alpha \partial_v^\beta \left(\frac{1}{\varepsilon}(u\cdot b)\sqrt{\M} - \frac{1}{\varepsilon}(u\cdot v)(b\cdot v)\sqrt{\M} \right) \, \partial_x^\alpha \partial_v^\beta (\mi g) }\\
	&-\intxv{\partial_x^\alpha \partial_v^\beta\left( \frac{1}{\varepsilon}u\cdot \nabla_v(\mi g) - \frac{1}{2\varepsilon}v\cdot u \mi g  \right) \, \partial_x^\alpha \partial_v^\beta (\mi g) }\\
	&+\intxv{\partial_x^\alpha \partial_v^\beta  \left( \frac{1}{\varepsilon}\left(\int vv\cdot \nabla_x(\mi g\sqrt{\M})\mathrm{d}v \right) \cdot v\sqrt{\M}\right)  \, \partial_x^\alpha \partial_v^\beta (\mi g) }\\
	\le&C_\delta(\Htwox{\psi}^2+\Htwox{u_0}^2+1)\Hthreex{b}^2+C_\delta\HxxxLv{\mi g}^2+\delta\frac{\Lv{\partial_x^\alpha\partial_v^\beta \mi g}^2}{\varepsilon^2}\\
	&-\intxv{\partial_x^\alpha \partial_v^\beta\left( \frac{1}{\varepsilon}u\cdot \nabla_v(\mi g) - \frac{1}{2\varepsilon}v\cdot u \mi g  \right) \, \partial_x^\alpha \partial_v^\beta (\mi g) }.
	\end{aligned}
	\end{equation}
	For the last term in \eqref{eq:J63-0}, integrating by parts, we deduce
	\begin{equation}\label{eq:J63-1}
	\begin{aligned}
	&-\intxv{\partial_x^\alpha \partial_v^\beta\left( \frac{1}{\varepsilon}u\cdot \nabla_v(\mi g) - \frac{1}{2\varepsilon}v\cdot u \mi g  \right) \, \partial_x^\alpha \partial_v^\beta (\mi g) }\\
	=&-\intxv{\left( \sum_{\theta\le \alpha,|\theta|\neq 0}C_\alpha^\theta \frac{1}{\varepsilon}\partial_x^\theta u\cdot \nabla_v\partial_x^{\alpha-\theta} \partial_v^\beta(\mi g) - \frac{1}{2\varepsilon}\partial_x^\alpha \partial_v^\beta (v\cdot u \mi g  )\right) \, \partial_x^\alpha \partial_v^\beta (\mi g)  }\\
	\le&C(\delta+\varepsilon)(\Htwox{\psi}^2+\Htwox{u_0}^2+1)\sum_{ |\alpha|\le 2}\sum_{ 1\le|\beta|\le 2}\frac{\Lv{\partial_x^\alpha \partial_v^\beta (\mi g)}^2}{\varepsilon^2}+C_\delta\HxxxLv{\mi g}^2,
	\end{aligned}
	\end{equation}	
	which subsequently implies
	\begin{equation}\label{eq:J63}
	\begin{aligned}
	J_{63}
	\le&C_\delta(\Htwox{\psi}^2+\Htwox{u_0}^2+1)\Hthreex{b}^2+C_\delta\HxxxLv{\mi g}^2+\delta\frac{\Lv{\partial_x^\alpha\partial_v^\beta \mi g}^2}{\varepsilon^2}\\
	&+C(\delta+\varepsilon)(\Htwox{\psi}^2+\Htwox{u_0}^2+1)\sum_{ |\alpha|\le 2}\sum_{ 1\le|\beta|\le 2}\frac{\Lv{\partial_x^\alpha \partial_v^\beta (\mi g)}^2}{\varepsilon^2}.
	\end{aligned}
	\end{equation}	
	
	Substituting (\ref{eq:J61}), (\ref{eq:J62}) and (\ref{eq:J63}) to (\ref{new1}), and summing over $|\alpha|\le 2$ and $1\le |\beta|\le 2$, we obtain
	\begin{equation}\label{eq:mixv g-0}
	\begin{aligned}
	&\frac{1}{2}\dt{t}\HxxHv{\nabla_v \mi g}^2+\sum_{ |\alpha|\le 2}\sum_{ 1\le|\beta|\le 2}\frac{c\Lv{\mi\partial_x^\alpha \partial_v^\beta (\mi g)}^2}{\varepsilon^2}\\
	\le&C(\delta+\varepsilon)\sum_{ |\alpha|\le 2}\sum_{1\le |\beta|\le 2}\frac{\Lv{\partial_x^\alpha \partial_v^\beta (\mi g)}^2}{\varepsilon^2}+C_\delta\D(t)+C_\delta\sum_{ |\alpha|\le 2}\frac{\Lv{\partial_x^\alpha\mi g}^2}{\varepsilon^2}.
	\end{aligned}
	\end{equation}	
	Substituting the standard inequality
	\begin{equation*}
	\begin{aligned}
	&\Lv{\mi\partial_x^\alpha \partial_v^\beta\mi g}=\Lv{\partial_x^\alpha \partial_v^\beta\mi g-\P\left(\partial_x^\alpha \partial_v^\beta\mi g \right) }\\
	\geq&  \Lv{\partial_x^\alpha \partial_v^\beta\mi g}-C\Lv{\intv{\partial_x^\alpha \partial_v^\beta\mi g \sqrt{\M}}\sqrt{\M}}\\
	&\quad-C\Lv{\intv{\partial_x^\alpha \partial_v^\beta\mi g  v\sqrt{\M}}\cdot v\sqrt{\M}}\\
	\geq&\Lv{\partial_x^\alpha \partial_v^\beta\mi g}-C\HxxLv{\mi g}
		\end{aligned}
	\end{equation*}	
	into  \eqref{eq:mixv g-0}, we find
	\begin{equation}\label{eq:ineq mi xv}
	\begin{aligned}
	&\dt{t}\HxxHv{\nabla_v \mi g}^2+\sum_{ |\alpha|\le 2}\sum_{ 1\le|\beta|\le 2}\frac{c\Lv{\partial_x^\alpha \partial_v^\beta (\mi g)}^2}{\varepsilon^2}\\
	\le&C\D(t)+C\sum_{ |\alpha|\le 2}\frac{\Lv{\partial_x^\alpha\mi g}^2}{\varepsilon^2},
	\end{aligned}
	\end{equation}	
	where we have utilized the smallness condition	
	\begin{equation}\label{eq:varepsilon-3}
	C(\delta+\varepsilon)\le \frac{c}{4}.
	\end{equation}	
	Finally, integrating \eqref{eq:ineq mi xv} over $(0,t)$ and using \eqref{eq:ED} and \eqref{eq:H2x mi}, we get \eqref{eq:mi H2xH1v}.
\end{proof}

\paragraph{Proof of \eqref{eq:mi H2xH2v}:}\eqref{eq:mi H2xH2v} follows directly from combining the results of \eqref{eq:H2x mi} and  \eqref{eq:mi H2xH1v}. Thus the proof of Theorem \ref{thm:priori} is complete.

 \section{Proof of Theorem \ref{thm1}} \label{sec:proof of thm1}
We first demonstrate that \eqref{eq:ED} and \eqref{eq:mi H2xH2v} in Theorem \ref{thm:priori} hold for all $t\in [0,T^\varepsilon_{\max})$. To this end, we define
\begin{equation}
T_1 = \sup_{0\leq t < T^\varepsilon_{\max}} \left\{ t \,\big|\, \E(s) \leq \bar{C}(\E(0) + \varepsilon^2) ,\ s\in [0,t]\right\}.
\end{equation}
Using the local existence result in Propositions \ref{prop_limit_decay} and \ref{prop:local_existence}, and noticing that $\bar{C}\ge1$, we have $0 < T_1 \leq T^\varepsilon_{\max}$. Suppose, for the sake of contradiction, that $T_1 < T^\varepsilon_{\max}$. By the continuity of $\E(t)$ on $[0, T^\varepsilon_{\max})$, there exists a $T_2 \in (T_1, T^\varepsilon_{\max})$ such that for all $t \in [0, T_2]$,
\begin{equation*}
\E(t) \leq 2\bar{C}(\E(0) + \varepsilon^2).
\end{equation*}
Applying Theorem \ref{thm:priori} then yields
\begin{equation}\label{new2}
\E(t) \leq \bar{C}(\E(0) + \varepsilon^2)
\end{equation}
for all $t \in [0, T_2]$. By the definition of $T_1$, this implies $T_2 \leq T_1$, which contradicts $T_2\in(T_1,T^\varepsilon_{max}) $. Therefore, we must have $T_1 = T_{\max}^\varepsilon$. Consequently, for all $t \in [0, T^\varepsilon_{\max})$, (\ref{new2}) holds. In the meanwhile, Theorem \ref{thm:priori} yields that the following uniform estimate
\begin{equation}\label{eq:uniform-1}
\begin{aligned}
&\E(t) + \HxxHvv{\mi g}^2 + \int_{0}^{t} \left( \D(s) + \sum_{|\alpha|\le 2}\sum_{|\beta|\le 2} \frac{\Lv{\partial_x^\alpha \partial_v^\beta (\mi g)}^2}{\varepsilon^2} \right) \d s \\
\le& C\left( \E(0) + \varepsilon^2 \right) + C\HxxHvv{(\mi g)^{in}}^2
\end{aligned}
\end{equation} holds for all $t \in [0, T^\varepsilon_{\max})$.

Combining the local existence result from Proposition \ref{prop:local_existence} with the uniform \textit{a priori} estimate \eqref{eq:uniform-1}, a standard argument ensures that under the assumptions of Theorem \ref{thm1}, the system \eqref{g} admits a global strong solution satisfying
\begin{equation}\label{eq:3}
\begin{aligned}
&\E(t) + \HxxHvv{\mi g}^2 + \int_{0}^{t} \left( \D(s) + \sum_{|\alpha|\le 2}\sum_{|\beta|\le 2} \frac{\Lv{\partial_x^\alpha \partial_v^\beta (\mi g)}^2}{\varepsilon^2} \right) \d s \\
\le& C\left( \E(0) + \varepsilon^2 \right) + C\HxxHvv{(\mi g)^{in}}^2, \quad \text{for all } t \ge 0.
\end{aligned}
\end{equation}

Recalling the micro-macro decompositions \eqref{decom} and \eqref{eq:Pg},
\begin{equation*}
\begin{aligned}
f^\varepsilon &= g\sqrt{\M} = a\M + b\cdot v \M + \mi g \sqrt{\M} \\
&= a\M + \tilde{b}\cdot v \M + \varepsilon\bar{b}\cdot v \M + \mi g \sqrt{\M},
\end{aligned}
\end{equation*}
and substituting $a = \phi + m_0$, we readily obtain the pointwise perturbation
\begin{equation*}
\begin{aligned}
f^\varepsilon - m_0\M = \phi\M + \tilde{b}\cdot v \M + \varepsilon\bar{b}\cdot v \M + \mi g \sqrt{\M}.
\end{aligned}
\end{equation*}
Consequently, Sobolev inequality and \eqref{eq:3} yield the desired $L^\infty$ estimates:
\begin{equation*}
\begin{aligned}
&\Linftyxv{f^\varepsilon - m_0\M}^2 + \Linftyx{u^\varepsilon - u_0}^2 \\
\le{}& C\Htwox{(\phi, \tilde{b}, \psi)}^2 + C\varepsilon^2 + C\HxxHvv{\mi g}^2 \\
\le{}& C\left( \E(0) + \varepsilon^2 \right) + C\HxxHvv{(\mi g)^{in}}^2.
\end{aligned}
\end{equation*}
This completes the proof of Theorem \ref{thm1}.

\section{Proof of Theorem \ref{thm2}}\label{sec:proofthm2}
This section is devoted to the proof of Theorem \ref{thm2}. To begin with, we first derive further \textit{a priori} estimates for the modified error system and the microscopic component, which will allow us to control the lower-order Sobolev norms of $(\phi,\psi) $ by a  better upper bound.

\subsection{Estimates for the modified error system}
Taking the divergence of the second equation in \eqref{eq:error} and substituting $\frac{1}{\varepsilon}\div_x \tilde{b} = -\partial_t \phi$ derived from the first equation, we can rewrite the macroscopic error system into the following equivalent form:
\begin{equation}
\label{eq:error1}
\left\{
\begin{aligned}
&\partial_t \phi - \Delta_x \phi - \varepsilon \partial_t \div_x \tilde{b} = -\div_x \tilde{R}_2, \\[1.5mm]
&\varepsilon \partial_t \tilde{b} + \frac{\tilde{b}}{\varepsilon} + \nabla_x \phi = \tilde{R}_2, \\[1.5mm]
&\partial_t \psi - \Delta_x \psi + \nabla_x (\tilde{P}+ m_0) - \frac{\tilde{b}}{\varepsilon} = \tilde{R}_3, \\[1.5mm]
&\partial_t \psi - \Delta_x \psi + \nabla_x (\tilde{P} + m_0 + \phi) + \varepsilon \partial_t \tilde{b} = \tilde{R}_4, \\[1.5mm]
&\div_x \psi = 0, \\[1.5mm]
&(\phi, \tilde{b}, \psi)|_{t=0}=(\phi^{in}, b^{in} - \varepsilon(m_0^{in} u_0^{in} - \nabla_x m_0^{in}), \psi^{in}),
\end{aligned}
\right.
\end{equation}
where the remainder terms $\tilde{R}_2, \tilde{R}_3$, and $\tilde{R}_4$ are explicitly given by \eqref{eq:remainders}.

To bound the macroscopic variables and error terms, we first derive $H^2_x$ estimates for $(\phi, \psi)$. In the new error system \eqref{eq:error1}, $\phi$ is governed by a parabolic equation, which directly yields dissipation for $\nabla_x \phi$. The loss-of-derivative issue caused by $-\varepsilon \partial_t \div_x \tilde{b}$ is fully resolved by the dissipation bound in Theorem \ref{thm1}. Instead of using the previous estimates for proving Theorem \ref{thm1}, we rely on \eqref{eq:3} for uniform bounds and apply the modified system \eqref{eq:error1} to perform the $H^2_x$ estimates. This refines the first main result by obtaining a sharper upper bound $\Htwox{(\phi^{in}, \psi^{in})}^2 + C\varepsilon^2$, thereby removing the smallness requirement on the microscopic components.
\begin{lemma}\label{lem:phipsiH2}
		Under the conditions of Theorem \ref{thm2}, we have
	\begin{equation}\label{eq:phipsiH2}
	\begin{aligned}
	&\Htwox{(\phi,\psi)}^2+\int_{0}^t\Htwox{\nabla_x(\phi,\psi)}^2\d s\\
	\le&\Htwox{(\phi^{in},\psi^{in})}^2+C\varepsilon^2[\E(0)+1+\HxxHvv{(\mi g)^{in}}^2]+C\int_0^t \HxxxLv{\mi g}^2 \d s
	\end{aligned}
	\end{equation}	for any $t\ge0$.
\end{lemma}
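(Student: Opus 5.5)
We work with the modified error system \eqref{eq:error1}. For $\phi$ we use its parabolic form $\eqref{eq:error1}_1$. For $\psi$ we use $\eqref{eq:error1}_4$, because it carries no $\tilde b/\varepsilon$ term. The global bound \eqref{eq:3} from Theorem \ref{thm1} controls every quantity carrying an $\varepsilon$ factor. For $|\alpha|\le 2$ we apply $\partial_x^\alpha$ to $\eqref{eq:error1}_1$ and $\eqref{eq:error1}_4$, test them against $\partial_x^\alpha\phi$ and $\partial_x^\alpha\psi$ respectively, and integrate over $\R^2$. The pressure drops out since $\div_x\psi=0$. The leading parts give
\[
\tfrac12\dt{t}\Ltwox{\partial_x^\alpha(\phi,\psi)}^2+\Ltwox{\nabla_x\partial_x^\alpha(\phi,\psi)}^2 .
\]
On the right-hand side we have the cross terms $\varepsilon\intx{\partial_t\partial_x^\alpha\div_x\tilde b\,\partial_x^\alpha\phi}$ and $-\varepsilon\intx{\partial_t\partial_x^\alpha\tilde b\cdot\partial_x^\alpha\psi}$, the term $-\intx{\partial_x^\alpha\nabla_x(m_0+\phi)\cdot\partial_x^\alpha\psi}$, which vanishes by $\div_x\psi=0$, and the remainder terms.

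\textbf{Remainders.} We bound $-\div_x\tilde R_2$ (after one integration by parts onto $\nabla_x\partial_x^\alpha\phi$) and $\tilde R_4$ as in Lemmas \ref{lem:phi}--\ref{lem:psi}, splitting them into four groups.
\begin{itemize}
\item The linear terms $\phi u_0$, $m_0\psi$, $u_0\cdot\nabla_x\psi$, $\psi\cdot\nabla_x u_0$ give $\delta\Htwox{\nabla_x(\phi,\psi)}^2$ plus $C[(1+t)^{-3/2}+\Ltwox{(u_0,m_0)}^2\Ltwox{\nabla_x(u_0,m_0)}^2]\Htwox{(\phi,\psi)}^2$. The bracket is integrable in time by Proposition \ref{prop_limit_decay}, so Gr\"onwall applies with a uniform constant.
\item The terms coming from $\div_x\int v\otimes v\,\mi g\sqrt{\M}\,\d v$ give $\delta\Htwox{\nabla_x(\phi,\psi)}^2+C\HxxxLv{\mi g}^2$. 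This is the source of the last integral in \eqref{eq:phipsiH2}.
\item The terms from $\varepsilon^2\partial_t\bar b$ give $C\varepsilon^4(1+t)^{-3/2}$.
\item The cubic terms $\phi\psi$ and $\psi\cdot\nabla_x\psi$ give $C\E(t)^{1/2}\Htwox{\nabla_x(\phi,\psi)}^2$, which is absorbed into the dissipation since $\E\le\bar C(\eta+\varepsilon^2)$ is small.
\end{itemize}

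\textbf{Cross terms (main obstacle).} The danger in the cross terms is the loss of a derivative in $\partial_t\div_x\tilde b$. We move $\partial_t$ off $\tilde b$: for instance
\[
\varepsilon\intx{\partial_t\partial_x^\alpha\div_x\tilde b\,\partial_x^\alpha\phi}=\varepsilon\dt{t}\intx{\partial_x^\alpha\div_x\tilde b\,\partial_x^\alpha\phi}-\varepsilon\intx{\partial_x^\alpha\div_x\tilde b\,\partial_t\partial_x^\alpha\phi}.
\]
By $\eqref{eq:error}_1$ we have $\varepsilon\partial_t\phi=-\div_x\tilde b$, so the last integral equals $\Ltwox{\partial_x^\alpha\div_x\tilde b}^2\le C\Hthreex{\tilde b}^2=C\varepsilon^2\cdot\Hthreex{\tilde b}^2/\varepsilon^2\le C\varepsilon^2\D(t)$. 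Its time integral is therefore at most $C\varepsilon^2(\E(0)+\varepsilon^2)$ by \eqref{eq:3}. Similarly,
\[
-\varepsilon\intx{\partial_t\partial_x^\alpha\tilde b\cdot\partial_x^\alpha\psi}=-\varepsilon\dt{t}\intx{\partial_x^\alpha\tilde b\cdot\partial_x^\alpha\psi}+\varepsilon\intx{\partial_x^\alpha\tilde b\cdot\partial_t\partial_x^\alpha\psi}.
\]
The last integral is bounded by $\Hthreex{\tilde b}^2/\varepsilon^2\cdot\varepsilon^2+\varepsilon^2\Htwox{\partial_t\psi}^2\le C\varepsilon^2\D$ as well, which is exactly where the $\varepsilon^2\Hthreex{\partial_t\psi}^2$ part of $\D$ is used.

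\textbf{Boundary terms and closing.} After integrating in time, the boundary terms $\varepsilon\intx{\partial_x^\alpha\div_x\tilde b\,\partial_x^\alpha\phi}$ and $\varepsilon\intx{\partial_x^\alpha\tilde b\cdot\partial_x^\alpha\psi}$ at times $t$ and $0$ are at most $\frac14\Htwox{(\phi,\psi)}^2+C\varepsilon^2\Hthreex{\tilde b}^2$. At time $0$ we instead use $\le \Ltwox{\cdot}^2$-type bounds, which produce the $C\varepsilon^2\E(0)$ contribution; at time $t$ we use $\Hthreex{\tilde b}^2\le C\E(t)\le C(\E(0)+\varepsilon^2)+C\HxxHvv{(\mi g)^{in}}^2$ by \eqref{eq:3}. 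This accounts for the bracket $\varepsilon^2[\E(0)+1+\HxxHvv{(\mi g)^{in}}^2]$ in \eqref{eq:phipsiH2}. The leftover $\frac14\Htwox{(\phi,\psi)(t)}^2$ is absorbed into the left-hand side. Summing over $|\alpha|\le2$, taking $\delta$ small, integrating over $(0,t)$ and applying Gr\"onwall with the integrable weight yields \eqref{eq:phipsiH2}, up to harmless constant factors in front of the initial norm. To obtain the coefficient exactly $1$ on $\Htwox{(\phi^{in},\psi^{in})}^2$ as stated, we would keep the weight as a multiplicative factor $e^{C\int_0^\infty(\cdot)}$ and accept it as part of $C$. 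This last point is bookkeeping; the genuine difficulty is the cross-term handling above.
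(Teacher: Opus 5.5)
\textbf{There is a genuine gap in your treatment of the $\psi$ cross term.} After moving $\partial_t$ off $\tilde b$, you bound $\varepsilon\intx{\partial_x^\alpha\tilde b\cdot\partial_t\partial_x^\alpha\psi}$ by $\Hthreex{\tilde b}^2+\varepsilon^2\Htwox{\partial_t\psi}^2$ and claim this is at most $C\varepsilon^2\D(t)$. The first piece is, but the second is not. The quantity $\partial_t\psi$ enters $\D$ only through $\varepsilon^2\Hthreex{\partial_t\psi}^2$, so $\varepsilon^2\Htwox{\partial_t\psi}^2\le\D(t)$ with no spare factor $\varepsilon^2$. Its time integral is then only $\le\bar C_1(\E(0)+\varepsilon^2)$, and you pick up $C\E(0)$ on the right-hand side.

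A bound of that form already follows from Theorem \ref{thm1}, since $\Htwox{(\phi,\psi)}^2\le\E(t)$ and $\int_0^t\Htwox{\nabla_x(\phi,\psi)}^2\,\d s\le\int_0^t\D\,\d s$. It is therefore useless for the time-shift argument of Theorem \ref{thm2}. Balancing Young's inequality does not rescue it: $\varepsilon\Htwox{\tilde b}\Htwox{\partial_t\psi}=\varepsilon\,(\Htwox{\tilde b}/\varepsilon)(\varepsilon\Htwox{\partial_t\psi})\le\varepsilon\D(t)$ gives $C\varepsilon\E(0)$, still not the $C\varepsilon^2\E(0)$ in \eqref{eq:phipsiH2}. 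Using $\D$ alone, one cannot do better.

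\textbf{The missing idea} is the one the paper uses in \eqref{eq:4.1-4}. Eliminate $\partial_t\psi$ with the Leray-projected momentum equation $\eqref{eq:error1}_3$, $\partial_t\psi=\Delta_x\psi+\varepsilon^{-1}\mathcal{P}\tilde b+\mathcal{P}\tilde R_3$. Then only quantities that sit in $\D$ with weight one are used, namely $\Hthreex{\nabla_x\psi}^2$ and $\Hthreex{\tilde b}^2/\varepsilon^2$. The three resulting terms are bounded as follows:
\begin{itemize}
\item $\varepsilon\intx{\partial_x^\alpha\tilde b\cdot\partial_x^\alpha\Delta_x\psi}\le\varepsilon^2(\Htwox{\tilde b}/\varepsilon)\Hthreex{\nabla_x\psi}\le\varepsilon^2\D(t)$;
\item $\intx{\partial_x^\alpha\tilde b\cdot\partial_x^\alpha\mathcal{P}\tilde b}\le\Htwox{\tilde b}^2\le\varepsilon^2\D(t)$;
\item $\varepsilon\intx{\partial_x^\alpha\tilde b\cdot\partial_x^\alpha\mathcal{P}\tilde R_3}\le\frac12\Htwox{\tilde b}^2+\frac{\varepsilon^2}{2}\Htwox{\tilde R_3}^2$.
\end{itemize}
In the last one, $\varepsilon^2\Htwox{\tilde R_3}^2\le C\varepsilon^2(1+\E(t))\D(t)+C\varepsilon^2[(1+t)^{-2}+\Ltwox{(u_0,m_0)}^2\Ltwox{\nabla_x(u_0,m_0)}^2]\Htwox{(\phi,\psi)}^2$, which fits your Gr\"onwall scheme. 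Equivalently, the same equation shows $\int_0^t\Htwox{\partial_t\psi}^2\,\d s\le C(\E(0)+\varepsilon^2)$ without any $\varepsilon^2$ weight, after which your Young splitting works.

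With this substitution, the rest of your plan coincides with the paper's proof: the $\phi$ cross term via $\varepsilon\partial_t\phi=-\div_x\tilde b$, the four remainder groups, the boundary terms controlled by \eqref{eq:3}, and Gr\"onwall. Your caveat about the constant in front of $\Htwox{(\phi^{in},\psi^{in})}^2$ applies equally to the paper's own Gr\"onwall step.
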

\begin{proof}
	First, applying $\partial_x^\alpha$ with $|\alpha|\le 2$ to $\eqref{eq:error1}_1$, multiplying the resulting equation by $\partial_x^\alpha\phi$, and integrating over $\mathbb{R}^2$, we obtain
	\begin{equation}\label{eq:4.1-1}
	\begin{aligned}
	&\frac{1}{2}\dt{t}\Ltwox{\partial_x^\alpha\phi}^2+\Ltwox{\nabla_x\partial_x^\alpha\phi}^2-\dt{t}\intx{\varepsilon\div_x\partial_x^\alpha\tilde{b}\,\partial_x^\alpha\phi}\\
	=&-\intx{\varepsilon\div_x\partial_x^\alpha\tilde{b}\,\partial_x^\alpha\partial_t\phi}-\intx{\partial_x^\alpha\div_x\tilde{R}_2\,\partial_x^\alpha\phi}\\
	=&\Ltwox{\div_x\partial_x^\alpha\tilde{b}}^2+\intx{\partial_x^\alpha\tilde{R}_2\cdot\nabla_x\partial_x^\alpha\phi},
	\end{aligned}
	\end{equation}
	where we have used $\eqref{eq:error}_1$.
	
	For the last term on the right-hand side of \eqref{eq:4.1-1}, recalling the definition of $\tilde{R}_2$ in \eqref{eq:remainders}, and using arguments similar to the proof of Lemma \ref{lem:phi}, we deduce
	\begin{equation}\label{eq:4.1-2}
	\begin{aligned}
	&\intx{\partial_x^\alpha\tilde{R}_2\cdot\nabla_x\partial_x^\alpha\phi}\\
	=&\intx{\partial_x^\alpha(\phi u_0+m_0\psi)\cdot\nabla_x\partial_x^\alpha\phi}+\intx{\partial_x^\alpha(\phi\psi)\cdot\nabla_x\partial_x^\alpha\phi}\\
	&-\intx{\partial_x^\alpha\div_x\left(\intv{v\otimes v\mi g\sqrt{\M}} \right) \cdot\nabla_x\partial_x^\alpha\phi}-\intx{\varepsilon^2\partial_x^\alpha\partial_t\bar{b}\cdot\nabla_x\partial_x^\alpha\phi}\\
	\le&\delta\Htwox{\nabla_x(\phi,\psi)}^2+C_\delta(1+t)^{-2}\Htwox{(\phi,\psi)}^2+C_\delta\varepsilon^2(1+t)^{-2}+C_\delta\HxxxLv{\mi g}^2\\
	&+C\Htwox{\phi}\Htwox{\nabla_x\psi}\Htwox{\nabla_x\phi}+C_\delta\Ltwox{(u_0,m_0)}^2\Ltwox{\nabla_x(u_0,m_0)}^2\Htwox{(\phi,\psi)}^2.
	\end{aligned}
	\end{equation}
	Combining \eqref{eq:4.1-1} and \eqref{eq:4.1-2}, we get
	\begin{equation}\label{eq:phi4.1}
	\begin{aligned}
	&\frac{1}{2}\dt{t}\Ltwox{\partial_x^\alpha\phi}^2+\Ltwox{\nabla_x\partial_x^\alpha\phi}^2-\dt{t}\intx{\varepsilon\div_x\partial_x^\alpha\tilde{b}\,\partial_x^\alpha\phi}\\
	\leq&\delta\Htwox{\nabla_x(\phi,\psi)}^2+C_\delta(1+t)^{-2}\Htwox{(\phi,\psi)}^2+C_\delta\varepsilon^2(1+t)^{-2}+C_\delta\HxxxLv{\mi g}^2\\
	&+C\Htwox{\phi}\Htwox{\nabla_x\psi}\Htwox{\nabla_x\phi}+C_\delta\Ltwox{(u_0,m_0)}^2\Ltwox{\nabla_x(u_0,m_0)}^2\Htwox{(\phi,\psi)}^2+C\Hthreex{\tilde{b}}^2.
	\end{aligned}
	\end{equation}
	
	Next, applying $\partial_x^\alpha$ with $|\alpha|\le 2$ to $\eqref{eq:error1}_4$, multiplying the result by $\partial_x^\alpha\psi$, integrating over $\mathbb{R}^2$, and using arguments similar to the proof of Lemma \ref{lem:psi}, we get	
	\begin{equation}\label{eq:4.1-3}
	\begin{aligned}
	&\frac{1}{2}\dt{t}\Ltwox{\partial_x^\alpha\psi}^2+\Ltwox{\nabla_x\partial_x^\alpha\psi}^2+\dt{t}\intx{\varepsilon\partial_x^\alpha\tilde{b}\cdot\partial_x^\alpha\psi}\\
	=&\intx{\varepsilon\partial_x^\alpha\tilde{b}\cdot\partial_x^\alpha\partial_t\psi}+\intx{\partial_x^\alpha\tilde{R}_4\cdot\partial_x^\alpha\psi}\\
	\le&\delta\Htwox{\nabla_x\psi}^2+C_\delta(1+t)^{-\frac{3}{2}}\Htwox{(\phi,\psi)}^2+C_\delta\varepsilon^2(1+t)^{-\frac{3}{2}}+C_\delta\HxxLv{\mi g}^2\\
	&+C_\delta\Ltwox{u_0}^2\Ltwox{\nabla_xu_0}^2\Htwox{\psi}^2+C\Htwox{\psi}\Htwox{\nabla_x\psi}^2+\intx{\varepsilon\partial_x^\alpha\tilde{b}\cdot\partial_x^\alpha\partial_t\psi}.
	\end{aligned}
	\end{equation}
Applying the Leray projection operator $\mathcal{P}$ to the momentum equation $\eqref{eq:error1}_3$ yields
\begin{equation*}
\partial_t\psi=\Delta_x\psi+\frac{\mathcal{P}(\tilde{b})}{\varepsilon}+\mathcal{P}(\tilde{R}_3).
\end{equation*}
Consequently, using the fact that the spatial derivative $\partial_x^\alpha$ commutes with the Leray projection $\mathcal{P}$ and the $L^2 \to L^2$ boundedness of $\mathcal{P}$ (see Remark III.1.1 \cite{Galdi}), we have
\begin{equation}\label{eq:4.1-4}
\begin{aligned}
&\intx{\varepsilon\partial_x^\alpha\tilde{b}\cdot\partial_x^\alpha\partial_t\psi}\\
=&\intx{\varepsilon\partial_x^\alpha\tilde{b}\cdot\partial_x^\alpha\left(\Delta_x\psi+\frac{\mathcal{P}(\tilde{b})}{\varepsilon}+\mathcal{P}(\tilde{R}_3) \right) }\\
\le&C\Htwox{\tilde{b}}^2+C\varepsilon^2(\E(t)+1)\D(t)+C\varepsilon^2(1+t)^{-2}\Htwox{\psi}^2\\
&+C\Ltwox{(u_0,m_0)}^2\Ltwox{\nabla_x(u_0,m_0)}^2\Htwox{(\phi,\psi)}^2.
\end{aligned}
\end{equation}
	Combining \eqref{eq:4.1-3} and \eqref{eq:4.1-4}, we get
		\begin{equation}\label{eq:psi4.1}
	\begin{aligned}
	&\frac{1}{2}\dt{t}\Ltwox{\partial_x^\alpha\psi}^2+\Ltwox{\nabla_x\partial_x^\alpha\psi}^2+\dt{t}\intx{\varepsilon\partial_x^\alpha\tilde{b}\cdot\partial_x^\alpha\psi}\\
	\le&\delta\Htwox{\nabla_x\psi}^2+C_\delta(1+t)^{-\frac{3}{2}}\Htwox{(\phi,\psi)}^2+C_\delta\varepsilon^2(1+t)^{-\frac{3}{2}}+C_\delta\HxxLv{\mi g}^2\\
	&+C_\delta\Ltwox{(u_0,m_0)}^2\Ltwox{\nabla_x(u_0,m_0)}^2\Htwox{(\phi,\psi)}^2+C\Htwox{\psi}\Htwox{\nabla_x\psi}^2+C\varepsilon^2\D(t).
	\end{aligned}
	\end{equation}

	Combining \eqref{eq:phi4.1}, \eqref{eq:psi4.1} and summing over $|\alpha|\le 2$, we obtain
	\begin{equation}\label{eq:4.1-6}
	\begin{aligned}
	&\dt{t}\Htwox{(\phi,\psi)}^2+\Htwox{\nabla_x(\phi,\psi)}^2-2\sum_{ |\alpha|\le 2}\dt{t}\intx{\left( \varepsilon\div_x\partial_x^\alpha\tilde{b}\,\partial_x^\alpha\phi-\varepsilon\partial_x^\alpha\tilde{b}\cdot\partial_x^\alpha\psi\right) }\\
	\le&C\HxxxLv{\mi g}^2+C(1+t)^{-\frac{3}{2}}\Htwox{(\phi,\psi)}^2+C\varepsilon^2(1+t)^{-\frac{3}{2}}\\
	&+C\varepsilon^2\D(t)+C\Ltwox{(u_0,m_0)}^2\Ltwox{\nabla_x(u_0,m_0)}^2\Htwox{(\phi,\psi)}^2,
	\end{aligned}
	\end{equation}	
	where we have used the smallness condition
	\begin{equation}\label{eq:eta-2,varepsilon-4}
	C\delta+C\sqrt{\E(t)}\leq C\delta+C\sqrt{C\eta+C\varepsilon^2}\leq \frac{1}{4}.
	\end{equation}
	 Integrating \eqref{eq:4.1-6} over $(0,t)$, utilizing the dissipation bound \eqref{eq:ED}, we have
	\begin{equation}\label{eq:phipsi}
	\begin{aligned}
	&\Htwox{(\phi,\psi)}^2+\int_0^t\Htwox{\nabla_x(\phi,\psi)}^2\d s\\
	\le&\Htwox{(\phi^{in},\psi^{in})}^2+C\varepsilon^2[\E(0)+1+\HxxHvv{(\mi g)^{in}}^2]\\
	&+C\int_0^t[(1+s)^{-\frac{3}{2}}+\Ltwox{\nabla_x(u_0,m_0)}^2]\Htwox{(\phi,\psi)}^2 \d s+C\int_0^t \HxxxLv{\mi g}^2\d s.
	\end{aligned}
	\end{equation}
		Applying Gr\"onwall's inequality to \eqref{eq:phipsi} over $[0,t] $ yields \eqref{eq:phipsiH2}.	This completes the proof of the lemma.
\end{proof}

Since the estimates of $(\phi,\psi) $ in Lemma \ref{lem:phipsiH2} depend on the bounds of the microscopic source terms $ \HxxxLv{\mi g}$, we must now derive the corresponding $ H^3_xL^2_v$ energy bounds for the kinetic part $\mi g$.
\begin{lemma}\label{lem:miH3}
	Under the conditions of Theorem \ref{thm2}, the following estimate holds:
	\begin{equation}\label{eq:1}
	\begin{aligned}
	&\Htwox{(\phi,\psi)}^2+\int_0^t\Htwox{\nabla_x(\phi,\psi)}^2\d s\\
	\le& \Htwox{(\phi^{in},\psi^{in})}^2+C\varepsilon^2\left( \E(0)+\HxxxLv{(\mi g)^{in}}^2+\HxxHvv{(\mi g)^{in}}^2+1\right).
		\end{aligned}
	\end{equation}
\end{lemma}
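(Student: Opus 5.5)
By Lemma \ref{lem:phipsiH2}, it suffices to show
\begin{equation*}
\int_0^t \HxxxLv{\mi g}^2\,\d s \le C\varepsilon^2\left(\E(0)+\HxxxLv{(\mi g)^{in}}^2+\HxxHvv{(\mi g)^{in}}^2+1\right).
\end{equation*}
The $\varepsilon^2$ gain comes from the strong dissipation $\varepsilon^{-2}\L$ in \eqref{i-p}. I would redo the energy estimate of Lemma \ref{lem:H2x mi} with $|\alpha|\le 3$ instead of $|\alpha|\le 2$, and then multiply the resulting differential inequality by $\varepsilon^2$.

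Concretely, apply $\partial_x^\alpha$ with $|\alpha|\le 3$ to \eqref{i-p} and test against $\partial_x^\alpha\mi g$. The transport term vanishes, and $\L$ gives $c\varepsilon^{-2}\Lv{\partial_x^\alpha\mi g}^2$. For the source $R_{mi}$, split it as $J_{51}$--$J_{54}$:
\begin{itemize}
\item $J_{51}$ and $J_{52}$ are bounded by $C_\delta\Hfourx{b}^2(1+\Htwox{\psi}^2+\Htwox{u_0}^2)+\delta\varepsilon^{-2}\Lv{\partial_x^\alpha\mi g}^2$.
\item $J_{54}$ involves $\nabla_x\partial_x^\alpha\mi g$ of order up to $4$, so it is bounded by $C_\delta\HxxxxLv{\mi g}^2+\delta\varepsilon^{-2}\Lv{\partial_x^\alpha \mi g}^2$.
\item $J_{53}$ is bounded by $C\varepsilon(\E^{1/2}+1)\sum_{|\alpha|\le3}\varepsilon^{-2}\Lv{\partial_x^\alpha\mi g}^2$, plus a commutator part. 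The commutator needs $\Linftyx{\nabla_x u}$ and $\Hthreex{u}$, which are controlled through $\E$ and Proposition \ref{prop_limit_decay}.
\end{itemize}
With $\varepsilon$ small this yields
\begin{equation*}
\dt{t}\HxxxLv{\mi g}^2+\frac{c}{\varepsilon^2}\sum_{|\alpha|\le3}\Lv{\partial_x^\alpha\mi g}^2\le C\D(t),
\end{equation*}
since $\Hfourx{b}^2$ and $\sum_{|\alpha|\le4}\Lv{\partial_x^\alpha\mi g}^2$ are both part of $\D$.

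Integrating in time and using $\int_0^t\D\le \bar C_1(\E(0)+\varepsilon^2)$ from \eqref{eq:ED} gives
\begin{equation*}
\int_0^t\HxxxLv{\mi g}^2\,\d s\le \frac{\varepsilon^2}{c}\Big(\HxxxLv{(\mi g)^{in}}^2+C(\E(0)+\varepsilon^2)\Big).
\end{equation*}
Inserting this into \eqref{eq:phipsiH2} gives \eqref{eq:1}, with $\varepsilon^4\le\varepsilon^2$ absorbed into the constant. The $\HxxHvv{(\mi g)^{in}}^2$ term simply carries over from Lemma \ref{lem:phipsiH2}.

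The main obstacle is the derivative count in $J_{54}$ and in the first part of $J_{51}$, which contain $\frac1\varepsilon\nabla_x\partial_x^\alpha$ with $|\alpha|=3$. I have to check that these are controlled by the fourth-order quantities $\sum_{|\alpha|\le4}\Lv{\partial_x^\alpha\mi g}^2$ and $\Hfourx{b}^2$ already in $\D$, rather than a fifth derivative; this holds because $\D$ contains exactly these fourth-order terms.

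The commutator term $\frac1\varepsilon\partial_x^\theta u\cdot\nabla_v\partial_x^{\alpha-\theta}\mi g$ with $|\theta|=3$ also needs care. I would put $\partial_x^\theta u$ in $L^2_x$, with $\Hthreex{\nabla_x\psi}$ in $\D$ and $\E$ bounded, and put $\nabla_v\mi g$ in $L^\infty_x$ via $\HxxHv{\mi g}$. Then $\HxxHv{\mi g}^2$ is bounded by $C(\E(0)+\varepsilon^2)+C\HxxHvv{(\mi g)^{in}}^2$ from \eqref{eq:mi H2xH2v}, and its time integral with the $\varepsilon^{-2}$ weight is finite by the same estimate. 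This is exactly where the $\HxxHvv{(\mi g)^{in}}^2$ term enters the right-hand side of \eqref{eq:1}.
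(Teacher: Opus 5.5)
Your proposal is correct and follows the paper's proof: the paper likewise redoes the estimate of Lemma \ref{lem:H2x mi} at the level $|\alpha|\le 3$ to obtain the $\varepsilon^{-2}$-weighted dissipation bound \eqref{eq:H3x mi}, and then inserts it into \eqref{eq:phipsiH2}. Your detour through $\HxxHv{\mi g}$ for the $|\theta|=3$ commutator is valid but unnecessary, because $\sum_{|\alpha|\le 3}\Lv{\partial_x^\alpha\mi g}^2$ already controls $\nabla_v\mi g$ in $H^2_xL^2_v\subset L^\infty_xL^2_v$, so that term is absorbed exactly as in your earlier bullet; also, the $\HxxHvv{(\mi g)^{in}}^2$ term actually enters through Lemma \ref{lem:phipsiH2}, not through this commutator.
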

\begin{proof}
	Similarly to the proof of Lemma \ref{lem:H2x mi}, using \eqref{i-p}, we obtain
	\begin{equation}\label{eq:H3x mi}
	\begin{aligned}
	& \HxxxLv{ \mi g}^2+\int_{0}^{t}\sum_{ |\alpha|\le 3}\frac{\Lv{\partial_x^\alpha \mi g}^2}{\varepsilon^2}\d s\\
	\le& C\left(\E(0) + \varepsilon^2 \right)+C\HxxxLv{ (\mi g)^{in}}^2.
	\end{aligned}
	\end{equation}	
Combining \eqref{eq:H3x mi} with \eqref{eq:phipsiH2} completes the proof of the lemma.
\end{proof}

\subsection{Proof of Theorem \ref{thm2}} \label{sec:ProofofTheorem2}
We are now in a position to finish the proof of Theorem \ref{thm2}. Under the assumptions of Theorem \ref{thm2}, the conclusions of Theorem \ref{thm1} hold true. Combining the dissipation estimates \eqref{eq:ED}, \eqref{eq:mi H2xH2v}, \eqref{eq:1}, and \eqref{eq:H3x mi}, we obtain
\begin{equation*}
\begin{aligned}
&\int_{0}^t\left(\frac{\Htwox{\nabla_x(\phi,\psi)}^2}{\varepsilon^2}+ \frac{\Hthreex{\tilde{b}}^2}{\varepsilon^2}+\frac{\HxxxLv{\mi g}^2}{\varepsilon^2}+\frac{\HxxHvv{\mi g}^2}{\varepsilon^2}\right)\d s\\
&+\int_0^t\left(  \Hthreex{\nabla_x\psi}^2+\Hthreex{\nabla_x\phi}^2+\HxxxxLv{\mi g}
^2+\Hfourx{b}^2\right) \d s\\
\le&\frac{\Htwox{(\phi^{in},\psi^{in})}^2}{\varepsilon^2}+C\left(\E(0)+1+\HxxxLv{(\mi g)^{in}}^2+\HxxHvv{(\mi g)^{in}}^2 \right).
\end{aligned}
\end{equation*}
By the mean value theorem, for any given $\tau \in (0, 1)$, there exists a constant $T_0\in(\frac{\tau}{2},\tau)$ such that
\begin{equation*}
\begin{aligned}
&T_0\left[\Htwox{\nabla_x(\phi(T_0),\psi(T_0))}^2+\Hthreex{\tilde{b}(T_0)}^2+\HxxxLv{\mi g(T_0)}^2+\HxxHvv{\mi g(T_0)}^2\right]\\
&+T_0\,\varepsilon^2\left[ \Hthreex{\nabla_x\psi(T_0)}^2+\Hthreex{\nabla_x\phi(T_0)}^2+\HxxxxLv{\mi g(T_0)}
^2+\Hfourx{b(T_0)}^2   \right]\\
\le&\Htwox{(\phi^{in},\psi^{in})}^2+C\varepsilon^2\left(\E(0)+\HxxxLv{(\mi g)^{in}}^2+\HxxHvv{(\mi g)^{in}}^2 +1\right):=\E_1(0),
\end{aligned}
\end{equation*}
which, in conjunction with \eqref{eq:1}, further implies that
\begin{equation}\label{new3}
\E(T_0):=\Hthreex{(\phi,\psi,\tilde{b})(T_0)}^2+\varepsilon^2\Hthreex{\nabla_x\psi(T_0)}^2+\varepsilon^2\HxxxxLv{g(T_0)}^2\le \frac{C\E_1(0)}{T_0},
\end{equation}
and that
\begin{equation}\label{eq:mi T0}
\HxxxLv{\mi g(T_0) }^2+\HxxHvv{\mi g(T_0)}^2\le \frac{C\E_1(0)}{T_0},
\end{equation}
where we have used \eqref{decom}, $a=m_0+\phi$ and
\begin{equation}
\begin{aligned}
\varepsilon^2\HxxxxLv{g}^2=&\varepsilon^2\HxxxxLv{a\sqrt{\M}+b\cdot v\sqrt{\M}+\mi g}^2\\
\leq&C\varepsilon^2+C\varepsilon^2\Hfourx{\phi}^2+C\varepsilon^2\Hfourx{b}^2+C\varepsilon^2\HxxxxLv{\mi g}^2.
\end{aligned}
\end{equation}

Taking $T_0$ as the new initial time and repeating the proof of Theorem \ref{thm1}, we find that for all $t \ge T_0$,
\begin{equation}\label{eq:uniformT0}
\begin{cases}
\displaystyle\E(t) \le \bar{C}(\E(T_0) + \varepsilon^2), \quad \int_{T_0}^{t} \D(s) \d s \le \bar{C}_1(\E(T_0) + \varepsilon^2),\\[4mm]
\displaystyle\HxxHvv{\mi g}^2 + \int_{T_0}^{t} \sum_{|\alpha|\le 2} \sum_{|\beta|\le 2} \frac{\Lv{\partial_x^\alpha \partial_v^\beta (\mi g)}^2}{\varepsilon^2} \d s \\
\le C\left(\E(T_0) + \varepsilon^2 \right) + C\HxxHvv{\mi g(T_0)}^2,
\end{cases}
\end{equation}
provided that the smallness condition holds:
\begin{equation*}
\E(T_0) \le \eta.
\end{equation*}
Assuming that
\begin{equation*}
 \frac{\bar{C}\E_1(0)}{\tau} \le \eta,
\end{equation*}
and using \eqref{new3}, we obtain
\begin{equation*}
\E(T_0)\le  \frac{C\E_1(0)}{T_0} \le \frac{\bar{C}\E_1(0)}{\tau} \le \eta.
\end{equation*}
Furthermore, \eqref{eq:uniformT0} implies that for all $t \ge \tau$,
\begin{equation*}
\E(t) + \HxxHvv{\mi g}^2 \le \frac{\bar{C}\E_1(0)}{\tau}.
\end{equation*}
This completes the proof of Theorem \ref{thm2}.

\subsection*{Statements and Declarations} The authors declare that they have no conflict of interest. Data sharing is not applicable to this article.
\paragraph{Acknowledgments}
This work is supported by the National Natural Science Foundation of China 12471209, and by the Guangzhou Basic and Applied Research Projects SL2024A04J01206.

\end{document}